\theoremstyle{plain}
\newtheorem{thm}{Theorem}[section]
\newtheorem{prop}{Proposition}[section]
\newtheorem{lem}[prop]{Lemma}
\newtheorem{cor}[prop]{Corollary}
\newtheorem{defi}[prop]{Definition}
\newtheorem{rmk}[prop]{Remark}
\newtheorem*{proposition*}{Proposition}
\numberwithin{equation}{section}
\newcommand {\R} {\mathbb{R}} 
 \newcommand {\N} {\mathbb{N}}
\newcommand {\p} {\partial}
\newcommand{\eps}{\epsilon}
\newcommand {\supp} {\mathrm{supp}}
\newcommand{\wt}{\widetilde}
\newcommand{\LC}{\left(}
\newcommand{\RC}{\right)}
\newcommand{\norm}[1]{\lVert #1 \rVert}         
\DeclareMathOperator {\dist} {dist}
\title[Simultaneous recoveries for semilinear parabolic systems]{Simultaneous recoveries for semilinear parabolic systems}
\author[Y.-H. Lin]{Yi-Hsuan Lin}
\address{Department of Applied Mathematics, National Yang Ming Chiao Tung University, Hsinchu 30050, Taiwan}
\email{yihsuanlin3@gmail.com}
\author[H. Liu]{Hongyu Liu}
\address{Department of Mathematics, City University of Hong Kong, Kowloon, Hong Kong SAR, China}
\email{hongyu.liuip@gmail.com, hongyliu@cityu.edu.hk}
\author[X. Liu]{Xu Liu}
\address{Key Laboratory of Applied Statistics of MOE,  
	School of Mathematics and Statistics,  Northeast Normal University, Changchun, China}
\email{liux216@nenu.edu.cn}
\author[S. Zhang]{Shen Zhang}
\address{Department of Mathematics, City University of Hong Kong, Kowloon, Hong Kong SAR, China}
\email{szhang347-c@my.cityu.edu.hk}
\begin{document}
	\maketitle
	
	\begin{abstract}
	
	In this paper, we study inverse boundary problems associated with semilinear parabolic systems in several scenarios where both the nonlinearities and the initial data can be unknown. We establish several simultaneous recovery results showing that the passive or active boundary Dirichlet-to-Neumann operators can uniquely recover both of the unknowns, even stably in a certain case. It turns out that the nonlinearities play a critical role in deriving these recovery results. If the nonlinear term belongs to a general $C^1$ class but fulfilling a certain growth condition, the recovery results are established by the control approach via Carleman estimates. If the nonlinear term belongs to an analytic class, the recovery results are established through successive linearization in combination with special CGO (Complex Geometrical Optics) solutions for the parabolic system.

		\medskip

		\noindent{\bf Keywords:}~Inverse boundary problem, semilinear parabolic equation, passive measurement, active measurement, Carleman estimate, simultaneous recovery, uniqueness, stability 
		
		\noindent{\bf 2010 Mathematics Subject Classification:}~~35R30, 35L70, 46T20, 78A05
		
	\end{abstract}

	\tableofcontents

	\section{Introduction}\label{Sec 1}

\subsection{Mathematical setup and statement of the main results}

In this paper, we are concerned with inverse problems for semilinear parabolic equations. Depending on the form of the nonlinear term, there are two setups for our study, which shall be discussed separately in what follows. 

First, we consider the case that the nonlinear term belongs to a $C^1$ class fulfilling a certain growth condition. We begin by introducing the forward model. 
Let $\Omega\subset\mathbb{R}^n$ be a   bounded  domain
with a $C^\infty$-smooth boundary $\Gamma$ for $n \in \N$ and 
$\Gamma_0$ be a nonempty relatively open subset of $\Gamma$.  For  any  $T>0$,   
we set 
$
Q=\Omega\times(0,  T)\text{ and }\Sigma=\Gamma \times(0,  T).
$
Assume that
$\gamma=\LC \gamma_{ij}(x,t)\RC_{i,j=1}^n \in C^{2,1}(\overline{Q}; \R^{n\times n})$
is  a symmetric  matrix-valued function in $\overline{Q}$, such that 
\[
\rho_0|\xi|^2 \leq \sum_{i,j=1}^n \gamma_{ij}(x,t)\xi_i \xi _j \leq \rho_0^{-1}|\xi|^2, \quad
\forall\  (x,t)\in \overline{Q }\ \ \text{ and }\ \ \xi=(\xi_1,\ldots, \xi_n) \in \R^n,
\]
for some positive constant $\rho_0\in (0, 1)$. Moreover,  
we denote by $H^{s, r}(Q)$,  $H^{s, r}(\Gamma)$,  $C^{k+\alpha}(\overline{\Omega})$ and
 $C^{k+\alpha, \frac{k}{2}+\frac{\alpha}{2}}(\overline{Q})$, respectively,   the standard Sobolev  spaces and 
 H\"older spaces for $s, r\in\mathbb R$, $k\in  \mathbb N$ and $\alpha\in(0, 1)$.
We refer to \cite{Adams2003sobolev} and \cite{evans1998partial} for details of these Banach spaces. Consider the following  semilinear parabolic equation: 
\begin{eqnarray}\label{eq:parabolic1}
	\begin{cases}
		u_{t}-\nabla\cdot(\gamma\nabla u)+a(x,t,u)=0  &\text{ in }\ Q,\\
		u=f   &\text{ on }\ \Sigma,\\
		u(x, 0)=g(x)  &\text{ in }\ \Omega,
	\end{cases}
\end{eqnarray}
where $u_t=\partial_t  u= \frac{\p u}{\p t}$, $g \in H^1_0(\Omega)$,   
$f\in L^2(\Sigma)$ and $a=a(x,t,y): 
Q\times\mathbb R\rightarrow \mathbb R$ is a given  function,  satisfying 
suitable conditions that will be specified later.

For any  $g \in H^1_0(\Omega)$  and a suitable function   $a: Q\times \R\to \R$, 
which guarantees the global well-posedness of (\ref{eq:parabolic1}) (see Section \ref{Sec 2}),  
we introduce the following  Dirichlet-to-Neumann (DN for short) operator:
\begin{align}\label{eq:DN1}
	\begin{split}
		\Lambda_{a,g}: \mathcal E & \rightarrow  L^2(\Gamma_0\times(0, T)) , \\  
		f&\mapsto  \partial_\nu u_{f}\Big|_{\Gamma_0\times(0, T)}. 
	\end{split}
\end{align}
Here we use the notation that $\p_\nu u(x):=\nabla u(x)\cdot \nu(x), \text{ for }x\in \Gamma$,
 where  $\nu=(\nu_1,\ldots, \nu_n)\in\mathbb{S}^{n-1}$ signifies the exterior unit normal to $\Gamma$,  and 
$u_f$ is the solution to \eqref{eq:parabolic1}  associated to the initial 
data $g\in H^1_0(\Omega)$ and  boundary data $f\in\mathcal E$ with
\begin{eqnarray*}
	&&\mathcal{E}=\Big\{  f\in    L^2(\Sigma)\  
	\Big|\    (\ref{eq:parabolic1}) \mbox{ is well-posed associated to }  g \mbox{ and }a,  \mbox{ such that }
	\\
	&&\quad\quad\quad\quad\quad\quad\quad\quad \, u\in  C([0, T]; L^2(\Omega))
	\mbox{   and  }  \left. \partial_\nu u \right|_{\Gamma_0\times(0, T)} \in L^2(\Gamma_0\times(0, T))
	\Big\}.
\end{eqnarray*}
It is known that when $a\in L^\infty(Q; W^{1, \infty}(\mathbb R))$,
$$
\Big\{  f\in    H^{\frac{3}{2}, \frac{3}{4}}(\Sigma)\  
\Big|\   
f(x, 0)=0 \mbox{ on  }\Gamma
\Big\}\subseteq \mathcal{E}.
$$
When $f\equiv0$  in $Q$,  the DN map is simply denoted by 
\begin{align*}
	\Lambda_{a,g}^{(0)}:=\Lambda_{a,g}(0),
\end{align*}
In such a case and in the physical situation, the field $u$ is generated by the initial data $g$, acting as a source, which is assumed to be unknown in our inverse problem study. Hence, the boundary measurement encoded in $\Lambda_{a,g}^{(0)}$ is passively taken by the observer, and in the literature, $\Lambda_{a,g}^{(0)}$ is usually referred to as the {\it passive measurement}. In contrast, $\Lambda_{a,g}(f)$ associated with a nontrivial boundary input $f$ is called the {\it active measurement} since the field $u$ is actively induced by the observer by imposing a boundary input. 

Associated to the forward model \eqref{eq:parabolic1}--\eqref{eq:DN1}, we are interested in the following two inverse problems:
\begin{itemize}
	\item \textbf{Inverse Problem 1.} Can we identify the  unknown functions $(a,g)$ by using the passive measurement $\Lambda_{a,g}^0$?
	
	\item  \textbf{Inverse Problem 2.} Can we identify the unknown functions $(a,g)$ by using the active measurement $\Lambda_{a,g}$?
\end{itemize}

It is emphasized that the principal coefficient $\gamma=\gamma(x,t)$ of our \textbf{Inverse Problem 1} and \textbf{Inverse Problem 2} can be space-time dependent. Next, we need to introduce certain a priori conditions on the nonlinear term $a$ in order to guarantee the well-posedness of the forward problem as well as the feasibility of the inverse problems. To that end, we assume that $a: Q\times\mathbb R\rightarrow\mathbb R$ 
with $a(x, t, \cdot)\in C^1(\mathbb R)$ in  $Q$, and satisfies the following growth condition: 
\begin{align}\label{condition of nonlinear f at infinity data}
\limsup\limits_{y\rightarrow\infty}\displaystyle\frac{\p_y a(x, t, y)}{\mbox{ln}^{\frac{1}{2}}|y|}=0,\quad
	\mbox{ uniformly for }(x,  t)\in  Q.
\end{align}
It is clear that any function in $L^\infty(Q; W^{1, \infty}(\mathbb R))$ satisfies 
the condition (\ref{condition of nonlinear f at infinity data}).
For notational clarity, we set
\begin{eqnarray}\label{set A}
	\begin{array}{ll}
		\displaystyle\mathcal{A}_T=\Big\{ a: Q\times\mathbb R\rightarrow\mathbb R\ \Big|  &
		a(x, t, \cdot)\in C^1(\mathbb R)\mbox{ in }Q, \  a(\cdot, \cdot, 0)\in  L^2(Q),\\
		&\displaystyle\mbox{ and the  condition } 
		(\ref{condition of nonlinear f at infinity data})\mbox{ is fulfilled } \Big\}.
	\end{array}
\end{eqnarray}  
In Section \ref{Sec 2}, we shall show that for any $g \in H^1_0(\Omega)$,
$a\in \mathcal{A}_T$ and $f=0$, \eqref{eq:parabolic1} has a unique solution $u\in H^{2, 1}(Q)$ and therefore, $\partial_\nu u \in L^2(\Sigma).$

We are in a position to state the first recovery result for the inverse problems introduced above. 

\begin{thm}[Conditional stability of determining initial data by passive measurement]\label{Main Thm 1} 
	Given  $a \in\mathcal A_T$  and 
	$g_j \in H_0^1(\Omega)$ $(j=1, 2)$.   Let $\Lambda_{a,g_j}^0$  be the
	$($\mbox{passive}$)$ DN map associated to the following semilinear parabolic equation: 
	\begin{align}\label{IBVP for thm 1 for j=1,2}
		\begin{cases}
			\partial_t u_{j}-\nabla\cdot(\gamma\nabla u_j)+a(x,t,u_j)=0  &\text{ in }\ Q,\\
			u_j=0  &\text{ on }\ \Sigma,\\
			u_j(x, 0)=g_j(x), 
			&\text{ in }\ \Omega.
		\end{cases}
	\end{align}
	For any $M>0$,  if 
	\[
	\norm{g_1-g_2}_{H^1_0(\Omega)}\leq M,  
	\]
	then there exist  positive constants $C$  and $\delta_0\in (0, 1)$, 
	depending only on $n,  T$  and $\Omega$, such that 
	the following quantitative stability estimate holds:
	\begin{align}\label{Stability estimate in Thm 1}
		\begin{split}
			\norm{g_1 -g_2}^2_{ L^2(\Omega)} 
			\leq & 
			\frac{C(1+M)}{\delta_0}\norm{\Lambda^0_{a, g_1}-\Lambda^0_{a, g_2}}_{L^2(\Gamma_0\times(0, T))}\\
			&-\frac{CM^2}{\ln\LC \delta_0 \norm{\Lambda^0_{a, g_1}-\Lambda^0_{a, g_2}}
				_{L^2(\Gamma_0\times(0, T))}\RC }.
		\end{split}
	\end{align}
\end{thm}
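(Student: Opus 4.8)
The plan is to reduce \eqref{IBVP for thm 1 for j=1,2} to a \emph{linear} parabolic problem for the difference $w:=u_1-u_2$ and then to extract $\norm{g_1-g_2}_{L^2(\Omega)}$ from the lateral Neumann data of $w$ by combining a Carleman‑based observability (the ``control approach'') with a quantitative backward‑uniqueness estimate, optimizing a free parameter at the end. Write $h=g_1-g_2$. By the mean value theorem $a(x,t,u_1)-a(x,t,u_2)=c(x,t)\,w$ with $c(x,t)=\int_0^1\partial_y a\big(x,t,u_2+\theta(u_1-u_2)\big)\,d\theta$, so $w$ solves
\begin{equation*}
\partial_t w-\nabla\cdot(\gamma\nabla w)+c(x,t)\,w=0\ \text{ in }Q,\qquad w=0\ \text{ on }\Sigma,\qquad w(\cdot,0)=h\ \text{ in }\Omega,
\end{equation*}
with $\partial_\nu w|_{\Gamma_0\times(0,T)}=\Lambda^0_{a,g_1}-\Lambda^0_{a,g_2}$ and $\norm{h}_{H^1_0(\Omega)}\le M$. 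By the well‑posedness and regularity theory of Section~\ref{Sec 2} one has $u_1,u_2\in H^{2,1}(Q)$, and the subcritical growth condition \eqref{condition of nonlinear f at infinity data} is exactly what keeps the potential $c$ tame (it controls $c$ through the a priori bounds on $u_1,u_2$), so $w$ is a genuine finite‑energy solution of a linear parabolic equation with controlled zero‑order term. Set $\varepsilon:=\norm{\Lambda^0_{a,g_1}-\Lambda^0_{a,g_2}}_{L^2(\Gamma_0\times(0,T))}=\norm{\partial_\nu w}_{L^2(\Gamma_0\times(0,T))}$.

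\textbf{Step 1: from the boundary flux to an interior time.} I would invoke a global Carleman estimate for the operator $\partial_t-\nabla\cdot(\gamma\nabla\cdot)+c$ with observation supported on $\Gamma_0\times(0,T)$, in its control‑theoretic form: the associated adjoint observability inequality yields, for any prescribed target, a lateral control steering the adjoint state, and pairing that control against $w$ through Green's identity expresses the interior norm of $w$ at a fixed intermediate time in terms of the measured flux. This gives
\begin{equation*}
\norm{w(\cdot,T/2)}_{L^2(\Omega)}^2\le C_1\,\varepsilon^2,
\end{equation*}
with $C_1$ depending only on $n,T,\Omega$ (and the structural constants). Only a middle time can be reached this way: the forward parabolic smoothing makes $w(\cdot,0)$ the ``roughest'' slice, so it cannot be observed directly — this is where the ill‑posedness sits, and the injectivity underpinning the control construction is the unique continuation $\partial_\nu w=0$ on $\Gamma_0\times(0,T)$, $w|_\Sigma=0\Rightarrow w\equiv0$.

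\textbf{Step 2: backward stabilization and optimization.} To descend from $t=T/2$ to $t=0$ I would use a quantitative backward‑uniqueness (logarithmic convexity / Carleman‑in‑time) estimate valid for the linear equation with time‑dependent coefficients: splitting $h$ into low and high frequencies, the high‑frequency part is bounded by $s^{-1}\norm{h}_{H^1_0(\Omega)}^2$ and the low‑frequency part by $e^{Cs}\norm{w(\cdot,T/2)}_{L^2(\Omega)}^2$, so that for every $s>0$
\begin{equation*}
\norm{h}_{L^2(\Omega)}^2\le C_2\,e^{C_2 s}\,\norm{w(\cdot,T/2)}_{L^2(\Omega)}^2+\frac{C_2}{s}\,\norm{h}_{H^1_0(\Omega)}^2 .
\end{equation*}
Inserting Step~1 and $\norm{h}_{H^1_0(\Omega)}\le M$ gives $\norm{g_1-g_2}_{L^2(\Omega)}^2\le C_3 e^{C_3 s}\varepsilon^2+C_3M^2/s$ for all $s>0$. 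Finally I fix $\delta_0\in(0,1)$ depending only on $n,T,\Omega$, reduce to the regime $\delta_0\varepsilon<1$ (the complementary case being handled trivially via $\norm{h}_{L^2(\Omega)}\lesssim\norm{h}_{H^1_0(\Omega)}$), and choose $s:=-C_3^{-1}\ln(\delta_0\varepsilon)>0$; then $e^{C_3 s}\varepsilon^2=\varepsilon/\delta_0$ and $M^2/s=-C_3M^2/\ln(\delta_0\varepsilon)$, which after renaming constants (and absorbing the mild data‑dependence of $C_1$ into the factor $1+M$) is exactly \eqref{Stability estimate in Thm 1}.

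The step I expect to be the main obstacle is Step~1: establishing the Carleman/observability estimate with the observation confined to the \emph{partial} lateral boundary $\Gamma_0\times(0,T)$ while carrying the zero‑order perturbation $c$ — here one must use that \eqref{condition of nonlinear f at infinity data} keeps $c$ in a class the Carleman weight can absorb — and arranging the parameter dependence so that Steps~1--2 couple cleanly for the final optimization. Conceptually, however, the heart of the matter is recognizing that all of the instability is concentrated in the backward passage of Step~2, which is precisely what forces the logarithmic modulus in \eqref{Stability estimate in Thm 1} and makes the a priori bound $M$ indispensable.
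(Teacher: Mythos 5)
Your proposal is correct and follows essentially the same route as the paper: reduce to a linear equation for $w=u_1-u_2$ with potential $c\in L^\infty(0,T;L^{2n}(\Omega))$, bound an interior time slice by the flux on $\Gamma_0\times(0,T)$ via a boundary Carleman estimate, then trade the backward-in-time instability against the a priori $H^1_0$ bound through a free parameter and optimize logarithmically --- this is precisely the content of Lemmas \ref{lemma1}, \ref{lemma2} and \ref{lemma3}. The one step where your justification is only heuristic is Step~2: the low/high frequency splitting is literally available only for autonomous self-adjoint operators, whereas here $\gamma=\gamma(x,t)$ and $c$ is merely $L^\infty(0,T;L^{2n}(\Omega))$, so the inequality $\norm{h}^2_{L^2(\Omega)}\leq Ce^{Cs}\norm{w(\cdot,t_0)}^2_{L^2(\Omega)}+Cs^{-1}\norm{h}^2_{H^1_0(\Omega)}$ must be obtained from the Carleman-in-time estimate you allude to (the paper's Lemma \ref{lemma2}, with weight $(K+t_0-t)^{-2\lambda}$) rather than from a spectral decomposition.
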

By Theorem \ref{Main Thm 1}, it is directly verified that if $\Lambda^0_{a, g_1}=\Lambda^0_{a, g_2}$ on $\Gamma_0\times (0, T), then $
$g_1=g_2$ in $\Omega$. Theorem~\ref{Main Thm 1} partially answers {\bf Inverse Problem 1} that if the nonlinear term $a$ belongs to the general class \eqref{set A} and is a-priori known, then the initial data $g$ can be uniquely recovered (in a stable manner) by the passive measurement. 

We proceed to consider \textbf{Inverse Problem 2} and introduce
another ``admissible" set on $a$:
\begin{eqnarray}\label{set C}
	\begin{split}
		\mathcal{B}_{T}=\Big\{ a:  Q\times\mathbb R\rightarrow\mathbb R\  \Big|\      
		&a(x, t, y)=a_0(x, t, y)\chi_{[0, T-\epsilon]}(t)+c(x,t,y)\chi_{[T-\epsilon, T]}(t) \\[2mm]
		&\mbox{for some }\epsilon>0\mbox{ and  }a_0\in\mathcal{A}_T, \\[2mm]
		&\mbox{where } 
		c\in\mathcal{A}_T\mbox{ and }c(x, t, 0)=0 \mbox{ in } Q \Big\},  
	\end{split} 
\end{eqnarray}
where $\chi_E(t)=\begin{cases}
	1  &t\in E,\\
	0 &\text{otherwise}
\end{cases}$ signifies the characteristic function of a set $E\subseteq\mathbb R$.

Our main unique recovery result for \textbf{Inverse Problem 2}  is stated as follows.

\begin{thm}[Uniqueness of determining initial data by active measurements]\label{Main Thm 2}
	Given $a_j\in \mathcal B_{T}$ 
	and $g_j\in H^1_0(\Omega)$ $(j=1, 2)$.  
	Let $\Lambda_{a_j,g_j}$  be the $($\mbox{active}$)$ DN map of the
	semilinear  parabolic equation: 
	\begin{align}\label{IBVP for thm 2 for j=1,2}
		\begin{cases}
			u_{j, t}-\nabla\cdot(\gamma\nabla u_j)+a_j(x,t,u_j)=0  &\text{ in } Q,\\
			u_j=f  &\text{ on } \Sigma,\\
			u_j(x, 0)=g_j(x) ,  &\text{ in }\Omega.
		\end{cases}
	\end{align}
	If for any $f\in \mathcal  E$ with $\supp(f)\subseteq \Gamma_0\times [0, T]$,
	\begin{equation}\label{eq:a1}
		\Lambda_{a_1,g_1}(f)=\Lambda_{a_2,g_2}(f) \quad \text{ on }\quad \Gamma_0 \times (0,T),
	\end{equation}
	then one has that
	\begin{equation}\label{eq:a2}
		g_1 =g_2 \quad\mbox{  in  }\ \Omega.
	\end{equation}	
\end{thm}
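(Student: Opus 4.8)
The plan is to combine a boundary null‑controllability result for the semilinear heat equation with a unique continuation / observability argument built on a Carleman estimate, using the structure of $\mathcal{B}_{T}$ in an essential way. Since $a_{1}\in\mathcal{B}_{T}$ coincides on $\Omega\times(0,T-\epsilon)$ with some $a_{0}^{(1)}\in\mathcal{A}_{T}$, the logarithmic growth condition \eqref{condition of nonlinear f at infinity data} on $\partial_{y}a_{0}^{(1)}$ is precisely the hypothesis, of Fern\'{a}ndez-Cara and Zuazua type, under which \eqref{IBVP for thm 2 for j=1,2} with $j=1$ is null‑controllable from $\Gamma_{0}$ in time $T-\epsilon$. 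Thus I would first pick an admissible control $f^{\ast}$, which can be taken smooth and compactly supported in $\Gamma_{0}\times(0,T-\epsilon)$ (extended by $0$ on $\Sigma$, so $\supp(f^{\ast})\subseteq\Gamma_{0}\times[0,T]$ and $f^{\ast}\in\mathcal{E}$), such that the associated solution $u_{1}$ satisfies $u_{1}(\cdot,T-\epsilon)=0$ in $\Omega$. On $\Omega\times(T-\epsilon,T)$ the nonlinearity is $c_{1}$ with $c_{1}(x,t,0)=0$ and the lateral and initial data vanish, so forward uniqueness (Section~\ref{Sec 2}) gives $u_{1}\equiv0$ on $\overline{\Omega}\times[T-\epsilon,T]$; in particular $\partial_{\nu}u_{1}=0$ on $\Gamma_{0}\times(T-\epsilon,T)$.

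Next I would transfer this vanishing to the second solution $u_{2}$. By \eqref{eq:a1}, $\partial_{\nu}u_{2}=\partial_{\nu}u_{1}=0$ on $\Gamma_{0}\times(T-\epsilon,T)$, while $u_{2}=f^{\ast}=0$ on $\Gamma\times(T-\epsilon,T)$. Since $c_{2}(x,t,0)=0$, a first‑order Taylor expansion writes $c_{2}(x,t,u_{2})=b_{2}(x,t)\,u_{2}$ with $b_{2}\in L^{\infty}(\Omega\times(T-\epsilon,T))$ (using the $C^{1}$ regularity of $c_{2}$ and the boundedness of $u_{2}$ from the well‑posedness theory), so $u_{2}$ solves a linear parabolic equation on $\Omega\times(T-\epsilon,T)$ with vanishing Cauchy data on $\Gamma_{0}\times(T-\epsilon,T)$. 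Extending $u_{2}$ by zero across $\Gamma_{0}$ and invoking parabolic unique continuation then forces $u_{2}\equiv0$ on $\Omega\times(T-\epsilon,T)$, hence $u_{2}(\cdot,T-\epsilon)=0$ in $\Omega$.

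It remains to work on $Q':=\Omega\times(0,T-\epsilon)$, where $u_{1}$ and $u_{2}$ now share the lateral Dirichlet data $f^{\ast}$, the vanishing lateral Neumann data on $\Gamma_{0}\times(0,T-\epsilon)$ and the terminal value $0$ at $t=T-\epsilon$, while $u_{j}(\cdot,0)=g_{j}$. Setting $w=u_{1}-u_{2}$ and expanding the $C^{1}$ nonlinearities, $w$ solves $w_{t}-\nabla\cdot(\gamma\nabla w)+b\,w=-H$ in $Q'$ with $b\in L^{\infty}(Q')$ and $H(x,t)=\big(a_{0}^{(1)}-a_{0}^{(2)}\big)\big(x,t,u_{2}(x,t)\big)$, together with $w=0$ on $\Sigma$, $\partial_{\nu}w=0$ on $\Gamma_{0}\times(0,T-\epsilon)$, $w(\cdot,T-\epsilon)=0$ and $w(\cdot,0)=g_{1}-g_{2}$. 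I would then feed $w$ into a global Carleman estimate for the parabolic operator on $Q'$ (with a weight singular at $t=0$), as in the proof of Theorem~\ref{Main Thm 1}, to obtain an observability‑type bound of $\|g_{1}-g_{2}\|_{L^{2}(\Omega)}$ by the (zero) lateral Neumann data of $w$ together with a contribution from $H$, and conclude \eqref{eq:a2}.

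I expect the last step — controlling the contribution of $H$ — to be the main obstacle. In Theorem~\ref{Main Thm 1} the coincidence $a_{1}=a_{2}$ makes the difference equation homogeneous and the observability inequality closes immediately; here $H=\big(a_{0}^{(1)}-a_{0}^{(2)}\big)(x,t,u_{2})$ is a genuine source that no observability inequality can absorb on its own. This is exactly where the class $\mathcal{B}_{T}$ is fully exploited: requiring the nonlinearity to coincide on the terminal slab with a function vanishing at $y=0$ is what makes that slab linear once $u_{1}$ has been steered to zero, and hence what powers the transfer $u_{1}\equiv0\rightsquigarrow u_{2}\equiv0$ near $t=T$. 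The technical heart of the proof is then to convert this, using \eqref{eq:a1} for all admissible boundary inputs — and, if necessary, re‑running the control construction over a suitable family of controls — into the vanishing of $g_{1}-g_{2}$ on $\Omega$.
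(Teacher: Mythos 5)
Your overall strategy coincides with the paper's: use boundary null controllability (the paper cites \cite{dzz}) to steer one of the two solutions to zero at $t=T-\epsilon$ with a control $f^*$ supported in $\Gamma_0\times(0,T-\epsilon]$, use $c_j(\cdot,\cdot,0)=0$ to propagate the vanishing over $[T-\epsilon,T]$, and then feed the difference $u_1-u_2$ (which has zero lateral Dirichlet data and, by \eqref{eq:a1}, zero Neumann data on $\Gamma_0\times(0,T)$) into the Carleman-based conditional stability estimate of Lemma \ref{lemma3}. But you stop exactly at the decisive step. The paper closes the argument because it reads the hypothesis $a_1,a_2\in\mathcal B_{T}$ as saying that the two nonlinearities share the \emph{same} $a_0$ and the same $\epsilon$, i.e. $a_j=a_0\chi_{[0,T-\epsilon]}+c_j\chi_{[T-\epsilon,T]}$. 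Writing $a_1(x,t,u_1)-a_2(x,t,u_2)=A(x,t)(u_1-u_2)+\big[a_1(x,t,u_2)-a_2(x,t,u_2)\big]$, the residual source $a_1(\cdot,\cdot,u_2)-a_2(\cdot,\cdot,u_2)$ is then \emph{identically zero}: on $[0,T-\epsilon]$ because $a_1=a_2=a_0$ there regardless of $u_2$, and on $[T-\epsilon,T]$ because the controlled solution satisfies $u_2\equiv 0$ there and $c_1(\cdot,\cdot,0)=c_2(\cdot,\cdot,0)=0$. Lemma \ref{lemma3} applied with $F\equiv 0$ and $\partial_\nu \wt u=0$ on $\Gamma_0\times(0,T)$ then yields $g_1=g_2$ at once. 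Your source $H=\big(a_0^{(1)}-a_0^{(2)}\big)(x,t,u_2)$ survives only because you allow $a_0^{(1)}\neq a_0^{(2)}$ on $[0,T-\epsilon]$; you correctly observe that no observability inequality can absorb such a term and you offer no mechanism to remove it, so the proof is not complete as proposed. The missing idea is precisely that the class $\mathcal B_{T}$ is exploited twice: the two nonlinearities are taken to agree on the initial slab, and to vanish at $y=0$ on the terminal slab.

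Two secondary remarks. First, your detour through $u_1\equiv 0$ on the terminal slab followed by a unique-continuation transfer to $u_2$ is unnecessary: the paper controls $u_2$ itself and applies Lemma \ref{lemma3} on all of $Q$, which requires no terminal condition $w(\cdot,T-\epsilon)=0$. Second, the claim that the linearized potential lies in $L^\infty$ is not justified by the global well-posedness theory, which only yields $u\in H^{2,1}(Q)$; the growth condition \eqref{condition of nonlinear f at infinity data} only places $\int_0^1 a_{1,u}(x,t,su_1+(1-s)u_2)\,ds$ in $L^\infty(0,T;L^{2n}(\Omega))$, which is exactly the class for which Lemmas \ref{lemma1}--\ref{lemma3} are formulated; your unique continuation step on the terminal slab would need the same care.
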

Theorem  \ref{Main Thm 2} means  that 
the map $\Lambda_{a,g}$ uniquely determines the initial data $g$, 
independent of  functions $a\in \mathcal B_{T}$. 

In the second setup of our study, we consider the case that the nonlinear term $a$ belongs to an analytic class. In such a case, we can assume that both the initial data and the nonlinear term are unknown and can simultaneously recover both of them. To that end, we first introduce the analytic class for the nonlinear term.
  
\begin{defi}[Admissible class]\label{Def: admissible coefficients}
	Let $b=b(x,t,y):\overline{Q}\times\R  \rightarrow \R$ satisfy the following conditions:
		\begin{align}\label{condition of the nonlinear term f(x,t,z)}
			\begin{cases}
				\mbox{the map $ y\mapsto b(\cdot,\cdot,  y)$ is analytic  on $\mathbb  R$ with values in $C^{2+\alpha,1+\alpha/2}(\overline{Q})$},  \\
				b(x,t,0)=0 \mbox{ in } Q,
			\end{cases}
		\end{align}
	    for some $\alpha\in (0, 1)$.
		This means that  $b$ can be written  as the Taylor expansion at any $y_0\in 
		\R$:
		\begin{equation}\label{eq:source1}
			b(x,t,y)= \sum_{k=0}^\infty b^{(k)}(x,t,y_0) \frac{(y-y_0)^k}{k!},
		\end{equation} 
		where $\dfrac{b^{(k)}(x,t,y_0)}{k!}:=\dfrac{\partial_y^k b(x,t,y_0)}{k!}$ are Taylor's coefficients at $y_0\in \R$ for any $k\in \N$.	
\end{defi}

Next, let $\Omega \subset \R^n$ be a bounded domain with a $C^\infty$-smooth boundary $\Gamma$, for $n \geq 2$, and $T>0$. We introduce the forward model by considering the following semilinear parabolic equation:  
\begin{align}\label{eq:parabolic simul}
	\begin{cases}
		\partial_t u- \Delta u +b(x,t,u)=0  &\text{ in }\ Q,\\
		u= f   &\text{ on }\ \Sigma,\\
		u(x,  0)=g(x),     &\text{ in }\ \Omega,
	\end{cases}
\end{align}
where $b$ is the function given as Definition \ref{Def: admissible coefficients}. It is easily seen that the second condition \eqref{condition of the nonlinear term f(x,t,z)} of $b$ implies that $u=0$ is a trivial solution when the initial and boundary data are both zero.
In Section~\ref{Sec 2}, we shall prove the (local) well-posedness of the forward problem \eqref{eq:parabolic simul} under the assumption that the coefficient $b$, initial data $g$ and the boundary data $f$ fulfil the following compatibility condition:
\begin{align}\label{compatibility conditions semilinear}
	g(\cdot)=g_{x_i}(\cdot)=g_{x_i x_j}(\cdot)=f(\cdot, 0)=
	f_t(\cdot, 0)=0\quad\mbox{ on  }\quad \Gamma,  \quad \mbox{  for }i,  j=1, \cdots, n.
\end{align}

Next, we introduce the boundary measurement associated with \eqref{eq:parabolic simul} for our inverse problem study. Let $\mathbb{S}^{n-1}$ be the unit sphere of $\R^n$ and fix $\omega_0\in\mathbb{S}^{n-1}$. Define 
\begin{align}\label{Gamma plus}
	\Gamma_{\pm, \omega_0}=\Big\{x\in\Gamma\ \Big| \   \pm \nu(x)\cdot\omega_0\geq0\Big\}\quad \mbox{and}\quad	\Sigma_{\pm, \omega_0}=\Gamma_{\pm, \omega_0}\times(0,T).
\end{align}
Let $\mathcal{U}_{\pm}$ be a neighborhood of $\Gamma_{\pm,\omega_0}$ in $\Gamma$ and set
$$\mathcal{V}_+=\mathcal{U}_{+}\times (0,T)\quad\mbox{ and }\quad\mathcal{V}_{-}=\mathcal{U}_{-}\times (0,T).$$
With these notations and the local well-posedness at hand, the partial DN map $\Lambda^{\mathrm{P}}_{b,g}$
  is defined as:
\begin{align}\label{eq:DN2}
	\begin{split}
		\Lambda^{\mathrm{P}}_{b,g}:  C_0^{2+\alpha,1+\alpha/2}(\mathcal{V}_+) & \rightarrow  C^{1+\alpha,1+\alpha/2}(\mathcal{V}_{-}), \\
		f&\mapsto  \p_\nu u_f\Big|_{\mathcal{V}_-}, 
	\end{split}
\end{align}
for sufficiently small $f\in C_0^{2+\alpha,1+\frac{\alpha}{2}}(\mathcal{V}_+)$  
and $g\in C_0^{2+\alpha}(\Omega)$,
 which satisfy the compatibility conditions \eqref{compatibility conditions semilinear}, where $u_f$ is the unique solution to \eqref{eq:parabolic simul}. Meanwhile, with the (local) well-posedness at hand, the (full) DN map of the initial-boundary value problem \eqref{eq:parabolic simul linear} is given via 
 \begin{align}\label{eq:DN3}
 	\begin{split}
 		\Lambda_{b,g}:  C_0^{2+\alpha,1+\frac{\alpha}{2}}(\Sigma) & \rightarrow  C^{1+\alpha,1+\frac{\alpha}{2}}(\Sigma), \\
 		f&\mapsto  \p_\nu u_f\Big|_{\Gamma}, 
 	\end{split}
 \end{align}
 for sufficiently small $f\in C_0^{2+\alpha,1+\frac{\alpha}{2}}(\Gamma)$  
 and $g\in C_0^{2+\alpha}(\Omega)$.
 
Our third inverse problem is to ask:
\begin{itemize}
	\item \textbf{Inverse Problem 3.} Can we determine the unknown functions $(b,g)$ by using active measurements, either $\Lambda_{b,g}$ or $\Lambda^{\mathrm{P}}_{b,g}$?
\end{itemize}
The main result established for {\bf Inverse Problem 3} is stated as follows. 

\begin{thm}[Simultaneous recovery for the semilinear parabolic equation]\label{Main Thm:Simultaneous}
	Let $\Omega\subset \R^n$ be a bounded connected domain with a $C^\infty$-smooth boundary $\Gamma$ for $n\geq 2$, 
	$\Gamma_{+,\omega_0}$ be the set given by \eqref{Gamma plus}  and $b_j$ $(j=1, 2)$ be admissible. 
	Then there  exists  a  $\delta>0$,  such that for  any  $g_j \in C_0^{2+\alpha}(\Omega)$ $(j=1, 2)$  
	with $\left\| g_j\right\|_{C^{2+\alpha}(\Omega)}<\delta/2$, denote by 
	$\Lambda_{b_j,g_j}$ and $\Lambda_{b_j,g_j}^{\mathrm{P}}$ 
	 the full and partial DN maps of the semilinear parabolic equation: 
	\begin{align}\label{IBVP of simultaneous recovery}
		\begin{cases}
			u_{t}-\Delta u +b_j(x,t,u)=0  &\text{ in }\ Q,\\
			u= f   &\text{ on }\ \Sigma,\\
			u(x,  0)=g_j(x),   &\text{ in }\ \Omega,
		\end{cases}
	\end{align}
    for $j=1,2$, respectively. Then we have  the following results:
    \begin{itemize}
    	\item[(a)] (Full data) If
    	$$\Lambda_{  b_1,g_1}(f)=\Lambda_{ b_2,g_2}(f) \text{ on }\Sigma, $$
    	for any sufficiently small lateral boundary data $f\in C^{2+\alpha,1+\frac{\alpha}{2}}_0(\Sigma)$, then
    	$$
    	g_1=g_2  \text{ in } \Omega\quad \text{ and }\quad  b_1=b_2 \text{ in } Q\times \mathbb{R}.
    	$$
    	
    	\item[(b)] (Partial data) Given an open connected set $\Omega'\subset \Omega $ satisfying $\Gamma\subset\p\Omega'$, we assume that $b_1=b_2 $ in $\Omega'\times(0,T)\times \R$, if $\Lambda^{\mathrm{P}}_{b_j,g_j}$ are the partial DN map of the semilinear parabolic equation $\eqref{IBVP of simultaneous recovery}$ and $$\Lambda_{  b_1,g_1}^{\mathrm{P}}(f)=\Lambda^{\mathrm{P}}_{ b_2,g_2}(f) \text{ on }\mathcal{V}_-, $$
    	for any sufficiently small lateral boundary data $f\in C^{2+\alpha,1+\frac{\alpha}{2}}_0(\mathcal{V}_+)$, then 
    	$$
    	g_1=g_2  \text{ in } \Omega\quad \text{ and }\quad  b_1=b_2 \text{ in } Q\times \mathbb{R}.
    	$$
    \end{itemize}
\end{thm}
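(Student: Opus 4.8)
\textbf{Proof proposal for Theorem \ref{Main Thm:Simultaneous}.}

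The plan is to exploit the analyticity of the nonlinearities via higher-order linearization, combined with parabolic CGO solutions to decouple the two unknowns. I would first set up the linearization scheme: for a small parameter $\varepsilon$ and boundary data $\varepsilon f$ (with $g_j$ small but fixed), write $u_j^\varepsilon$ for the solution of \eqref{IBVP of simultaneous recovery}. The obstacle is that, unlike in the standard semilinear elliptic case, the initial data $g_j$ is nonzero, so $u=0$ is not the background around which we linearize; rather the background is $w_j$, the solution with boundary data $0$ and initial data $g_j$. Thus the $0$-th order term in $\varepsilon$ already carries information about $g_j$. I would therefore proceed in two stages.

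\emph{Stage 1: recovering $g_1=g_2$.} Setting $f=0$ (or taking the $\varepsilon\to 0$ limit), the DN map equality forces $\p_\nu w_1 = \p_\nu w_2$ on $\mathcal V_-$ (resp.\ $\Sigma$), where $w_j$ solves $\p_t w_j-\Delta w_j + b_j(x,t,w_j)=0$ with $w_j=0$ on $\Sigma$ and $w_j(\cdot,0)=g_j$. Here I would invoke Theorem \ref{Main Thm 2} (or rather its proof mechanism: the nonlinearities $b_j$ lie in $\mathcal B_T$ since they are analytic with $b_j(x,t,0)=0$, hence in $\mathcal A_T$, and the condition $c(x,t,0)=0$ is automatic), which already gives $g_1=g_2$ from active measurements with a single fixed nonlinearity — but here the nonlinearities differ, so one must first argue that the passive/active measurement with $f=0$ is insensitive to the difference $b_1-b_2$ through the background. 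Actually the cleaner route: use the first linearization in $\varepsilon$ to reduce to a \emph{linear} parabolic equation whose zeroth-order coefficient is $q_j(x,t):=\p_y b_j(x,t,w_j(x,t))$, and whose DN map is determined; then proceed as in Stage 2 but tracking that $w_1=w_2$ would follow once $q_1=q_2$ near $\Gamma$ plus uniqueness for the linear backward-type problem. I expect this coupling — that $g$ enters only through $w_j$ which in turn enters the linearized coefficients — to be the main technical subtlety, resolved by first handling the linear inverse problem (recovering the potential) and feeding the result back.

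\emph{Stage 2: recovering $b_1=b_2$ by successive linearization.} Differentiating the DN-map identity $k$ times in $\varepsilon$ at $\varepsilon=0$ and using the well-posedness/analytic dependence from Section \ref{Sec 2}, one obtains at first order that the linear DN maps for $\p_t v - \Delta v + q_j v = 0$ agree on the measurement set. Using parabolic CGO solutions (exponentially growing/decaying solutions of the form $e^{\rho\cdot x + \rho_0 t}(1+r)$ tailored to the heat operator, with $\Gamma_{+,\omega_0}$ chosen so the relevant integration-by-parts boundary terms are controlled — this is exactly why the partial-data statement needs $b_1=b_2$ on $\Omega'\times(0,T)\times\R$ and the geometry of $\Gamma_{\pm,\omega_0}$), I would derive an integral identity forcing $q_1=q_2$ in $Q$, i.e.\ $\p_y b_1(x,t,w_1)=\p_y b_2(x,t,w_2)$ along the background. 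Combined with $g_1=g_2$ (so $w_1=w_2=:w$) and $b_j(x,t,0)=0$, integrating gives $b_1=b_2$ on the graph $\{(x,t,w(x,t))\}$. To promote this to equality on all of $Q\times\R$, proceed to higher order: the $k$-th linearization yields source terms involving $\p_y^k b_j(x,t,w)$, and an induction (solving the cascade of linear parabolic problems with the previously-matched lower-order data, then applying the CGO integral identity again) shows $\p_y^k b_1(x,t,w(x,t)) = \p_y^k b_2(x,t,w(x,t))$ for every $k$; by the analyticity \eqref{eq:source1}, expanding the Taylor series at the point $y_0=w(x,t)$ gives $b_1(x,t,\cdot)=b_2(x,t,\cdot)$ as functions of $y$, hence on all of $Q\times\R$.

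The hard part will be twofold: (i) justifying the higher-order linearization rigorously — one needs the $C^{k}$-in-$\varepsilon$ dependence of $u^\varepsilon$ on the data in the Hölder spaces $C^{2+\alpha,1+\alpha/2}$, which should follow from the implicit function theorem applied to the forward map established in Section \ref{Sec 2}, plus a Schauder-type a priori estimate for the linearized parabolic problems with $C^{\alpha,\alpha/2}$ coefficients; and (ii) constructing the parabolic CGO solutions with the right decay to make the boundary integrals over $\mathcal V_+$ versus $\mathcal V_-$ separate correctly — for the partial-data case one must ensure the unwanted boundary contribution over $\Gamma\setminus(\Gamma_{+,\omega_0}\cup\Gamma_{-,\omega_0})$ vanishes or is absorbed, which is where the hypothesis $\Gamma\subset\p\Omega'$ and $b_1=b_2$ on $\Omega'\times(0,T)\times\R$ enters, allowing one to extend $w$ and the coefficients past $\Gamma$ and run a Runge-approximation/unique-continuation argument. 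I would handle the full-data case (a) first as the clean model, then adapt to (b) by localizing the CGO construction near $\Gamma_{+,\omega_0}$.
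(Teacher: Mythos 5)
Your overall skeleton (higher-order linearization around the nonzero background $\wt u_j$, parabolic CGO solutions, Runge approximation for the partial-data case) matches the paper's, and you correctly identify the central difficulty: the unknown initial data enters only through the background $\wt u_j$, which in turn enters every linearized coefficient. But your proposal does not resolve this coupling, and the step where you claim to do so would fail. Your Stage~1 cannot be run first: Theorem~\ref{Main Thm 2} does not apply, since its class $\mathcal B_T$ forces the two nonlinearities to share the same $a_0$ on $[0,T-\epsilon]$ and its proof needs a nontrivial controllability input $f^*$, whereas here $b_1$ and $b_2$ may differ everywhere; and your ``cleaner route'' leaves the recovery of $g$ unexplained, while Stage~2 then simply assumes $g_1=g_2$ (``so $w_1=w_2=:w$''). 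The paper's resolution is the opposite order: it first matches \emph{all} Taylor coefficients along the two (possibly different) backgrounds, $\p_u^k b_1(x,t,\wt u_1)=\p_u^k b_2(x,t,\wt u_2)$ for every $k\ge 1$, and only afterwards recovers $g$. The key missing idea is the bounded-quotient trick of Step~5: using $b_j(x,t,0)=0$ expanded in the Taylor series at $\wt u_j$, one writes $b_1(x,t,\wt u_1)-b_2(x,t,\wt u_2)=G(x,t)\,\bigl(\wt u_1-\wt u_2\bigr)$ with $G\in L^\infty(Q)$, so that $w=\wt u_1-\wt u_2$ solves a \emph{linear} parabolic equation with zero lateral Dirichlet data and zero Neumann data on $\Gamma_0\times(0,T)$; the Carleman-estimate stability of Lemma~\ref{lemma3} then yields $g_1=g_2$, hence $\wt u_1=\wt u_2$, and only then does the matched Taylor data give $b_1=b_2$ on $Q\times\R$. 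Without this (or an equivalent mechanism) your argument is circular.

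Two further points. First, your claim that the first-order identity $\p_y b_1(x,t,w)=\p_y b_2(x,t,w)$ can be ``integrated'' (using $b_j(x,t,0)=0$) to give $b_1=b_2$ on the graph is not valid: the identity holds only at the single value $y=w(x,t)$, not on the whole interval between $0$ and $w(x,t)$, so there is nothing to integrate; the paper needs all orders of $y$-derivatives at the background point together with the Taylor expansion to obtain equality of the values. Second, the $M$-th order integral identity contains a product of $M+1$ solutions, while the CGO density result (Lemma~\ref{Lem: denseness}) only handles products of two; the paper disposes of the extra factors by choosing them to be bounded \emph{positive} solutions via the maximum principle with suitable nonnegative boundary data, a device your proposal does not mention and which is needed to conclude $\p_u^M b_1(x,t,\wt u_1)=\p_u^M b_2(x,t,\wt u_2)$ at each stage of the induction.
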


Theorem~\ref{Main Thm:Simultaneous} states that \textbf{Inverse Problem 3} can be solved under situations. In fact, we can determine $b(x,t,u)$ and $g(x)$ simultaneously by using active measurements with full data. Meanwhile, if we assume $b(x,t,u)$ is known a-priori in a small neighborhood of $\Sigma$, then we can also determine $b(x,t,u)$ and $g(x)$ simultaneously with partial measurements. 

\begin{rmk}
We would like to point out that 
\begin{itemize}
	\item[(a)] The proof of Theorem \ref{Main Thm:Simultaneous} relies on the successive linearization method combining with suitable \emph{complex geometrical optics} (CGO) solutions (see \cite{CY2018logarithmic} or Appendix \ref{Sec: Appendix}) and useful approximation properties (see Section \ref{Sec 4}). We can simply utilize either full or partial DN maps for the semilinear equation \eqref{IBVP of simultaneous recovery} in order to determine both coefficients and initial data uniquely. Moreover, the smallness assumption for both initial and boundary data is needed due to the local well-posedness of the forward problem \eqref{IBVP of simultaneous recovery} (see Section \ref{Sec 2}), but not used to solve the inverse problem.
	
	\item[(b)] In the statement (b) of Theorem \ref{Main Thm:Simultaneous}, the set $\Omega'$ can be chosen as $\Omega'=\Omega \setminus \overline{D}$ with $D\Subset \Omega$ such that $\Omega\setminus \overline{D}$ is connected. Moreover, such a set $D \Subset \Omega$ can be as large as possible so that the set $\Omega'$ is very ``thin". This means, for our partial data result, it is sufficient for us to know the coefficient near the boundary $\Gamma \times (0,T)$ a priori.
\end{itemize}
\end{rmk}

Finally, it would be interesting to consider the linear counterparts of the inverse problems studied in Theorem \ref{Main Thm:Simultaneous}, since to our best knowledge, the simultaneous recovery results are untouched in the literature even in the linear case, namely 
$$
b(x,t,y)=q(x,t)y
$$
as a linear function with respect to $y\in \R$. For this linear model, the smallness conditions for initial and boundary data are not required, since the well-posedness for general linear parabolic equations have been well understood (for example, see \cite[Chapter 7]{evans1998partial} or \cite{ladyzhenskaia1988linear}). To proceed, let us consider the linear parabolic equation:  
\begin{align}\label{eq:parabolic simul linear}
	\begin{cases}
	    \p _t u - \Delta u +q _j u=0  &\text{ in }\ Q,\\
		u= f   &\text{ on }\ \Sigma,\\
		u(x,  0)=g(x)     &\text{ in }\ \Omega.
	\end{cases}
\end{align}
In order to derive the well-posedness of  strong solutions to \eqref{eq:parabolic simul linear}, we need to impose the following compatibility condition:
\begin{align}\label{compatibility conditions intro}
	g(\cdot)=f(\cdot, 0)\quad\mbox{ on  } \Gamma. 
\end{align}
Via the condition \eqref{compatibility conditions intro}, one has the well-posedness of \eqref{eq:parabolic simul linear} immediately (see \cite[Chapter 7]{evans1998partial}). Therefore, one is able to define the corresponding partial DN map 
\begin{align}\label{eq:DN linear}
	\begin{split}
		\Lambda^{\mathrm{P}}_{q,g}:  C^{2+\alpha,1+\alpha/2}_0(\mathcal{V}_+) & \rightarrow C^{1+\alpha,1+\alpha/2}(\mathcal{V}_{-}) , \\  
		f&\mapsto  \p_\nu u \Big|_{\mathcal{V}_-},
	\end{split}
\end{align}
and the (full) DN map 
\begin{align}\label{eq:DN linear_full}
	\begin{split}
		\Lambda_{q,g}:  C^{2+\alpha,1+\alpha/2}_0(\Sigma) & \rightarrow C^{1+\alpha,1+\alpha/2}(\Sigma) , \\  
		f&\mapsto  \p_\nu u \Big|_{\Sigma}. 
	\end{split}
\end{align}
Now, the inverse problem is to determine $q(x,t)$ and $g$ by using the measurements either $\Lambda^{\mathrm{P}}_{q,g}$ or $	\Lambda_{q,g}$. The last main unique recovery result is stated as follows.

\begin{thm}[Simultaneous recovery for linear parabolic equations]\label{Main Thm:Simultaneous linear}
	Let $\Omega\subset \R^n$ be a bounded domain with $C^\infty$-smooth boundary $\Gamma$.	
	For  any $q_j\in C^{2+\alpha,1+\alpha/2}(\overline{Q})$ and $g_j \in C^{2+\alpha}_0(\Omega)$, 
	suppose $\Lambda_{q_j,g_j}$ and $\Lambda^{\mathrm{P}}_{q_j,g_j}$ are the full and partial DN maps of the linear parabolic equation:   
	\begin{align}\label{IBVP of simultaneous recovery linear}
		\begin{cases}
			(\p_t-\Delta  +q_j) u=0  &\text{ in } Q,\\
			u= f   &\text{ on }\Sigma,\\
			u(x,  0)=g_j(x),   &\text{ in } \Omega,
		\end{cases}
	\end{align}
for $j=1,2$, respectively. Then we have 
\begin{itemize}
	\item[(a)] (Full data) If 
	\[
	\Lambda_{q_1,g_1}(f)=\Lambda_{q_2,g_2}(f) \text{ on }\Sigma,
	\]
	for any $f\in C^{2+\alpha, 1+\alpha/2}_0(\Sigma)$, then 
$$
g_1=g_2  \text{ in } \Omega\quad \text{ and }\quad  q_1=q_2 \text{ in }Q.
$$

	\item[(b)] (Partial data) Given an open connected subset $\Omega'\subset \Omega$ satisfying $\Gamma \subset \p \Omega'$, we assume that $q_1=q_2$ in $\Omega'\times(0,T)$, where $\Omega'$ is a connected open subset of $\Omega$ such that $\p\Omega\subset\p\Omega'$.
	and
	$$\Lambda^{\mathrm{P}}_{  q_1,g_1}(f)=\Lambda^{\mathrm{P}}_{ q_2,g_2}(f) \text{ on }\mathcal{V}_-, $$
	for any $f \in  C^{2+\alpha, 1+\alpha/2}_0 (\mathcal{V}_+),$ then
	$$
	g_1=g_2  \text{ in } \Omega\quad \text{ and }\quad  q_1=q_2 \text{ in }Q.
	$$
\end{itemize}
    
\end{thm}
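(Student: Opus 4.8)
The plan is to reduce the linear simultaneous recovery problem to two separate ingredients: first recover the potential $q$ from the lateral boundary data, and then recover the initial data $g$ afterwards, using linearity to decouple the influence of $g$ from the influence of $f$. Concretely, for a given lateral input $f$ with $g=0$, write the solution of \eqref{IBVP of simultaneous recovery linear} as $u = v + w$, where $w$ solves the problem with initial data $g_j$ and zero boundary data (the ``source'' contribution) and $v$ solves the problem with boundary data $f$ and zero initial data. Since the equation is linear and autonomous in the unknowns, this superposition is exact, and $\Lambda_{q_j,g_j}(f) - \Lambda_{q_j,g_j}(0) = \p_\nu v_j|_\Sigma$ depends only on $q_j$ and not on $g_j$. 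Thus the ``difference from the passive part'' of the DN map is precisely the DN map of the homogeneous-initial-data linear problem, and the classical results on recovering time-dependent potentials for parabolic equations from boundary data apply.

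First I would treat the full-data case (a). Using the superposition above, the hypothesis $\Lambda_{q_1,g_1}(f)=\Lambda_{q_2,g_2}(f)$ for all admissible $f$ does not immediately give equal ``boundary DN maps'' because the passive parts $\Lambda_{q_j,g_j}(0)$ may differ. However, one can exploit the affine structure: the map $f \mapsto \Lambda_{q_j,g_j}(f) - \Lambda_{q_j,g_j}(0)$ is linear in $f$, equals $\Lambda_{q_1,g_1}(f) - \Lambda_{q_1,g_1}(0) = \Lambda_{q_2,g_2}(f)-\Lambda_{q_2,g_2}(0)$ by hypothesis only if $\Lambda_{q_1,g_1}(0)=\Lambda_{q_2,g_2}(0)$, which is not yet known. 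So instead I would take differences of the DN map at two boundary inputs $f$ and $f'$: $\Lambda_{q_j,g_j}(f)-\Lambda_{q_j,g_j}(f') = \p_\nu(v_j^f - v_j^{f'})|_\Sigma$, which is the DN map of the zero-initial-data problem applied to $f-f'$, and this is genuinely equal for $j=1,2$. This identifies the boundary DN maps of the homogeneous linear parabolic problems with potentials $q_1, q_2$, and then the known uniqueness theorem for recovering a time-dependent potential $q \in C^{2+\alpha,1+\alpha/2}(\overline Q)$ in $\p_t - \Delta + q$ from the lateral DN map (proved via CGO solutions for the parabolic operator, as cited in the appendix and Section~\ref{Sec 4}) yields $q_1 = q_2 =: q$ in $Q$. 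Once $q_1 = q_2$, the passive parts $\Lambda_{q_1,g_1}(0) = \Lambda_{q_2,g_2}(0)$ coincide by hypothesis, so $\p_\nu w_1|_\Sigma = \p_\nu w_2|_\Sigma$ where $w_j$ solves the zero-boundary-data problem with the same potential $q$ and initial data $g_j$; by linearity $w := w_1 - w_2$ solves $(\p_t - \Delta + q)w = 0$ in $Q$ with $w|_\Sigma = 0$, $w(\cdot,0) = g_1 - g_2$, and $\p_\nu w|_\Sigma = 0$. Then a parabolic unique continuation / observability argument — precisely of the type already used in the proof of Theorem~\ref{Main Thm 1} via Carleman estimates, or alternatively the CGO-based argument — forces $g_1 - g_2 = 0$, giving $g_1 = g_2$.

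For the partial-data case (b), the structure is the same but one must be more careful with domains of definition. The hypothesis $q_1 = q_2$ in $\Omega'\times(0,T)$ with $\Gamma \subset \p\Omega'$ means the potentials agree in a neighborhood of the lateral boundary, which is exactly the setting in which the partial-data CGO construction (with $\Gamma_{\pm,\omega_0}$ and the neighborhoods $\mathcal U_\pm$) recovers $q$ in the remaining region. I would again subtract two boundary inputs supported in $\mathcal V_+$ to kill the initial-data contribution, obtain equality of the partial boundary DN maps of the homogeneous problems, invoke the partial-data uniqueness theorem for the time-dependent potential to conclude $q_1 = q_2$ in all of $Q$, and then repeat the initial-data argument: with $q_1 = q_2$, the passive contributions agree on $\mathcal V_-$, so $w = w_1 - w_2$ satisfies the homogeneous equation with $w|_\Sigma = 0$ and $\p_\nu w = 0$ on $\mathcal V_-$, and unique continuation from $\mathcal V_-$ plus the backward uniqueness for parabolic equations yields $g_1 = g_2$.

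The main obstacle is the potential-recovery step itself: establishing that the lateral (or partial-lateral) DN map of $\p_t - \Delta + q$ with homogeneous initial data uniquely determines $q \in C^{2+\alpha,1+\alpha/2}(\overline Q)$. This is where the parabolic CGO solutions and the associated density/approximation arguments (the content flagged for Section~\ref{Sec 4} and the Appendix) do the real work; everything else — the superposition decomposition to decouple $g$ from $f$, the subtraction trick to handle the affine offset, and the final unique-continuation argument for $g$ — is comparatively routine once the linear machinery from the rest of the paper is in hand. A secondary technical point is ensuring the regularity and compatibility conditions \eqref{compatibility conditions intro} are preserved under the superposition and the subtraction $f - f'$, so that all the intermediate solutions are genuinely strong solutions to which the DN maps apply; this should follow directly from linearity of the compatibility condition and the well-posedness results cited from \cite[Chapter 7]{evans1998partial}.
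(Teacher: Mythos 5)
Your proposal is correct and follows essentially the same route as the paper: decompose $u_j$ into the zero-initial-data part $v_j$ and the zero-boundary-data part $w_j$, recover $q_1=q_2$ first from the lateral DN map of the homogeneous-initial-data problem via the CGO/density machinery (the paper's Corollary~\ref{fulldata} and Lemma~\ref{Lem:CGO}), and then recover $g_1=g_2$ from the resulting difference $w_1-w_2$ with vanishing Cauchy data via the Carleman-estimate stability of Theorem~\ref{Main Thm 1}. The only difference is your subtraction at two inputs $f,f'$ to handle the affine offset, which is unnecessary: $f=0$ lies in the admissible class, so $\Lambda_{q_1,g_1}(0)=\Lambda_{q_2,g_2}(0)$ is already part of the hypothesis and the paper simply takes $f'=0$.
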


It is noted that when the initial data $g_1 =g_2=0$ in $\Omega$, the logarithmic stability result for two potentials of the inverse problem associated with the linear parabolic equation with partial data has been investigated in \cite{CY2018logarithmic}.

\subsection{Background and discussion}

In this paper, we are interested in the study of inverse problems for semilinear parabolic equations. A classical result of inverse boundary value problems for semilinear parabolic equations was proposed by Isakov \cite{isakov1993uniqueness_parabolic}, where a first-order linearization technique was exploited to reduce the inverse problem associated with the nonlinear equation into its counterpart associated with a linear equation. Then one can apply some existing results for the linear equations to investigate related inverse problems for the nonlinear equations. In addition, one can also consider the second-order linearization method, which has been successfully adapted in solving some related inverse problems; see \cite{AYT2017direct,CNV2019reconstruction,KN002,sun1996quasilinear,sun1997inverse} and the references cited therein.

In recent years, various inverse problems for nonlinear hyperbolic equations have been proposed and studied. Some works mentioned above are based on solution properties to inverse problems associated with the linearized equations. It turns out that in the inverse problem study associated with nonlinear hyperbolic equations, one finds that the nonlinear interactions bring more information which enables to solve some inverse problems that are still unsolved in the setting associated with linear equations. In \cite{kurylev2018inverse}, the authors investigated inverse problems for hyperbolic equations with a quadratic nonlinearity on a globally hyperbolic $4$-dimensional Lorentzian manifold. For more related works of inverse problems for nonlinear hyperbolic equations, we refer readers to \cite{lassas2017determination,lassas2018inverse,chen2019detection,de2018nonlinear,kurylev2014einstein,wang2016quadartic,LLPT2020uniqueness,LLPT2021stability,LLL2021determining} and references cited therein. 
In addition, inverse problems for semilinear elliptic equations have been attracted a lot of attentions in recent years. By utilizing high order linearization approach, it is possible to solve several inverse problems for local and nonlocal nonlinear elliptic equations, and we refer readers to \cite{LLLS2019nonlinear,FO2019semilinear,LLLS2019partial,LLST2020inverse,LL2020inverse,lai2019global,lin2020monotonicity,LZ2020partial,KU2019derivative_partial,KU2019remark,kian2020partial,carstea2020recovery,carstea2020inverse} for more detailed discussions.

The study of inverse problems on simultaneously recovering an unknown source and its surrounding inhomogeneous medium has also received considerable attentions recently in the literature due to its connection to many cutting-edge applications, including the photo- and thermo-acoustic tomography \cite{A1}, magnetic anomaly detection via the geomagnetic monitoring \cite{A3,A4} and quantum mechanics \cite{A5,A6}. Here, in the setup described in the previous section, say e.g. in \eqref{eq:parabolic simul}, the initial data $g$ and $b^{(0)}$ for $b$ in \eqref{eq:source1} represent the source terms, whereas the other terms in \eqref{eq:source1} of $b$ represent the medium effects. In \cite{LLL2021determining}, the simultaneous recovery for inverse problems associated with semilinear hyperbolic systems with unknown sources and nonlinearities was studied. In this paper, we consider the simultaneous recovery for inverse problems associated with semilinear parabolic systems. It is remarked that we develop new strategies which enable us to deal with more general source and medium configurations in the semilinear parabolic setup than the semilinear hyperbolic case. Finally, we would like to mention in passing some related physical applications that can be described by the semilinear parabolic systems in our study, including the heat diffusion \cite{ganji2018nonlinear}, mean-field game theory \cite{cardaliaguet2010notes,gueant2011mean} and phase field theory \cite{boettinger2002phase,karma2001phase}. The inverse problems proposed and studied in this paper can be connected to those practical applications.

The rest of the paper is structured as follows. In Section \ref{Sec 2}, we study the well-posedness of the initial boundary value problems for the semilinear parabolic equations under suitable assumptions. In Section \ref{Sec 3}, we establish the conditional stability estimates, and show the unique determination by utilizing either passive or active measurements. We prove Theorems \ref{Main Thm:Simultaneous} and \ref{Main Thm:Simultaneous linear} in Section \ref{Sec 4}. Finally,  for the sake of completeness, we review some basic properties including CGO solutions and weak maximum principle for linear parabolic equations.

\section{Well-posedness of the forward problems}\label{Sec 2}

This section is  devoted to studying
the local and global well-posedness for initial-boundary value problems of  semilinear parabolic equations, respectively. Let us consider  the following semilinear parabolic equation: 
\begin{align}\label{eq:parabolic simul_forward}
	\begin{cases}
		u_{t}- \nabla\cdot(\tilde\gamma\nabla u) +b(x,t,u)=0  &\text{ in }\ Q,\\
		u=\tilde f   &\text{ on }\ \Sigma,\\
		u(x,  0)=\tilde g(x)    &\text{ in }\ \Omega,
	\end{cases}
\end{align}
where $\tilde\gamma$ is  symmetric and uniformly positive definite   with $\tilde\gamma
\in C^{1+\alpha, \alpha/2}(\overline{Q}; \mathbb R^{n\times  n})$, 
$\tilde g\in C^{2+\alpha}(\overline\Omega)$,  $\tilde f\in  C^{2+\alpha, 1+\alpha/2}(\overline\Sigma)$ for $\alpha\in (0, 1)$, 
and $b$ satisfies  the following conditions:
\begin{equation}\label{e9}
	b\in  C^2(\overline{Q}\times\mathbb R)\quad\mbox{and}\quad b(\cdot,  \cdot, 0)=0 \ \mbox{in  }Q.
\end{equation}

As  a preliminary, we recall the well-posedness result
and the Schauder  estimate for linear parabolic equations, which can be found in \cite{ladyzhenskaia1988linear}.
\begin{lem}\label{wellpose1}
	For $\alpha\in (0, 1)$, assume that  $\tilde\gamma
	\in C^{1+\alpha, \alpha/2}(\overline{Q}; \mathbb R^{n\times  n})$ and $q\in C^{\alpha,\alpha/2}(\overline Q)$.
	For  any  $\tilde g\in C^{2+\alpha}(\overline\Omega)$,  $\tilde  f\in  C^{2+\alpha, 1+\alpha/2}(\overline\Sigma)$  
	and  $h\in C^{\alpha,\alpha/2}(\overline Q)$ with  the   compatibility conditions:
	\begin{align}\label{compatibility conditions}
		\tilde g(x)=\tilde f(x, 0)\ \mbox{and}\ 
		\tilde f_t(x, 0)=\nabla\cdot(\tilde \gamma(x,  0)\nabla \tilde g(x))-q(x, 0)\tilde g(x)+h(x, 0) \  \mbox{ on }\Gamma, 
	\end{align}
	the following linear parabolic equation: 
	\begin{align}
		\begin{cases}
			u_{t}- \nabla\cdot(\tilde\gamma \nabla u) +qu= h  &\text{ in }\ Q,\\
			u=\tilde f   &\text{ on }\ \Sigma,\\
			u(x,  0)=\tilde g(x)    &\text{ in }\ \Omega,
		\end{cases}
	\end{align}
	admits a unique solution $u\in C^{2+\alpha, 1+\alpha/2}(\overline{Q})$.  Moreover,  
	$$\norm{u}_{C^{2+\alpha, 1+\alpha/2}(\overline{Q})}\leq 
	C\Big(\norm{\tilde f}_{C^{2+\alpha, 1+\alpha/2}(\overline\Sigma)}
	+\norm{\tilde g}_{C^{2+\alpha}(\overline\Omega)}
	+\|h\|_{C^{\alpha, \alpha/2}(\overline Q)}\Big).$$
\end{lem}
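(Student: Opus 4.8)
The assertion is the classical linear parabolic Schauder theory for uniformly parabolic operators with H\"older coefficients, so the plan is to recall the standard argument (cf. \cite{ladyzhenskaia1988linear}). First rewrite the principal part in non-divergence form, $\nabla\cdot(\tilde\gamma\nabla u)=\sum_{i,j}\gamma_{ij}u_{x_ix_j}+\sum_i\big(\sum_j\partial_{x_j}\gamma_{ij}\big)u_{x_i}$; since $\tilde\gamma\in C^{1+\alpha,\alpha/2}(\overline Q)$ the first-order coefficients lie in $C^{\alpha,\alpha/2}(\overline Q)$, so the operator $L:=\partial_t-\nabla\cdot(\tilde\gamma\nabla\,\cdot\,)+q$ is uniformly parabolic with all coefficients in $C^{\alpha,\alpha/2}(\overline Q)$. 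Uniqueness is immediate: if $u$ solves $Lu=0$ with $\tilde f=0$, $\tilde g=0$, then testing against $u$, integrating over $\Omega$, and using uniform ellipticity of $\tilde\gamma$ together with $q\in L^\infty(Q)$ gives $\frac{d}{dt}\norm{u(\cdot,t)}_{L^2(\Omega)}^2\le C\norm{u(\cdot,t)}_{L^2(\Omega)}^2$, whence $u\equiv 0$ by Gr\"onwall's inequality (the weak maximum principle recalled later in the paper also applies).

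For existence together with the estimate I would proceed in three steps. \emph{(i) Reduction to homogeneous data.} Using the zeroth-order identity in \eqref{compatibility conditions} one constructs an extension $w\in C^{2+\alpha,1+\alpha/2}(\overline Q)$ with $w|_\Sigma=\tilde f$, $w(\cdot,0)=\tilde g$, and $\norm{w}_{C^{2+\alpha,1+\alpha/2}(\overline Q)}\le C\big(\norm{\tilde f}_{C^{2+\alpha,1+\alpha/2}(\overline\Sigma)}+\norm{\tilde g}_{C^{2+\alpha}(\overline\Omega)}\big)$; then $v:=u-w$ must solve $Lv=\tilde h$ in $Q$, $v|_\Sigma=0$, $v(\cdot,0)=0$, with $\tilde h:=h-Lw\in C^{\alpha,\alpha/2}(\overline Q)$, and a short computation using the second identity in \eqref{compatibility conditions} shows that the corner compatibility for $v$ collapses to the single requirement $\tilde h(\cdot,0)=0$ on $\Gamma$. \emph{(ii) A priori Schauder estimate} $\norm{v}_{C^{2+\alpha,1+\alpha/2}(\overline Q)}\le C\norm{\tilde h}_{C^{\alpha,\alpha/2}(\overline Q)}$ for the reduced problem, obtained by localizing with a parabolic partition of unity into: interior cylinders, where one freezes $\tilde\gamma$ at the center, compares with the constant-coefficient parabolic operator (whose explicit fundamental solution yields the H\"older bounds) and absorbs the coefficient oscillation by shrinking the cylinder; half-cylinders at $\Sigma$, treated after flattening the (smooth) boundary $\Gamma$ and an odd reflection; and cylinders abutting the initial slice $t=0$, treated by reflection in $t$, where the normalization $\tilde h(\cdot,0)=0$ on $\Gamma$ is exactly what keeps the reflected right-hand side in $C^{\alpha,\alpha/2}$. \emph{(iii) Existence by the method of continuity} along $L_\tau:=(1-\tau)(\partial_t-\Delta)+\tau L$, $\tau\in[0,1]$: each $L_\tau$ is uniformly parabolic with $C^{\alpha,\alpha/2}$ coefficients, so the estimate of step (ii) holds uniformly in $\tau$; the heat equation ($\tau=0$) with zero data and $C^{\alpha,\alpha/2}$ right-hand side satisfying $\tilde h(\cdot,0)=0$ on $\Gamma$ is solvable in $C^{2+\alpha,1+\alpha/2}(\overline Q)$; hence the set of solvable $\tau$ is open and closed in $[0,1]$, so $\tau=1$ is solvable, and undoing (i) gives $u=v+w$ with the claimed bound. (Alternatively one may build $u$ directly by the parametrix/Levi method via boundary layer potentials, which yields existence and the estimate simultaneously.)

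The main obstacle is step (ii), and within it the behavior at the edge $\Gamma\times\{0\}$ where the lateral boundary meets the initial hyperplane: this is the only place where the full strength of the compatibility conditions \eqref{compatibility conditions} is used, and controlling the double reflection --- across the flattened spatial boundary and across $t=0$ simultaneously --- so as to keep the data in the correct H\"older class is the technical heart of the argument. Everything else (localization, freezing of coefficients, the continuity method) is routine once this corner estimate is in place, which is precisely why one is content to quote \cite{ladyzhenskaia1988linear}.
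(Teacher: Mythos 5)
The paper does not actually prove this lemma: it is stated as a recalled classical result and attributed to \cite{ladyzhenskaia1988linear}, so there is no in-paper argument to compare against. Your sketch is a correct reconstruction of the standard Schauder-theory proof from that reference --- rewriting in non-divergence form, uniqueness by the energy estimate, reduction to homogeneous data via an extension built from the zeroth-order compatibility (with the first-order compatibility collapsing, as you compute, to $\tilde h(\cdot,0)=0$ on $\Gamma$), frozen-coefficient interior/lateral/initial estimates, and the method of continuity --- and your identification of the corner $\Gamma\times\{0\}$ as the place where the compatibility conditions genuinely enter is exactly right.
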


Note that,  if $h=0$ in $Q$, $\tilde g\in C^{2+\alpha}(\overline\Omega)$  
with $\tilde g=\tilde g_{x_i}=\tilde g_{x_i x_j}=0\ (i, j=1, \cdots, n)$ on $\Gamma$  and $\tilde 
f\in C^{2+\alpha, 1+\alpha/2}(\overline\Sigma)$  with $\tilde f(x, 0)=\tilde f_t(x, 0)=0$ on $\Gamma$, it is straightforward
to verify that the compatibility condition \eqref{compatibility conditions} holds.

By Lemma \ref{wellpose1} and the fixed-point method,  we have the following local well-posedness for (\ref{eq:parabolic simul_forward}).
\begin{thm}[Local well-posedness]\label{Thm:local well-posedness}
	Assume that  $\tilde\gamma
	\in C^{1+\alpha,  \alpha/2}(\overline{Q}; \mathbb R^{n\times  n})$ and 
	$b$ satisfies the condition $(\ref{e9})$.  Then there exists a positive constant $\delta$,  
	such that for any $(\tilde f, \tilde g)\in V_\delta$, the equation \eqref{eq:parabolic simul_forward} has a unique solution $u\in C^{2+\alpha, 1+\alpha/2}(\overline{Q})$,
	where 
	\begin{eqnarray*}
		&&V_\delta=\Big\{ (\tilde  f,  \tilde g)\in C^{2+\alpha, 1+\alpha/2}(\overline{\Sigma}) \times
		C^{2+\alpha}(\overline\Omega) 
		\ \Big|\ \tilde f(x, 0)=\tilde f_t(x, 0)=0 \mbox{ on }\Gamma,\\
		&&\quad\quad\quad\quad\quad\quad\quad\quad\quad \quad \quad\quad \quad\quad\quad \quad\quad\quad\tilde g=\tilde g_{x_i}=\tilde g_{x_i x_j}=0, \ 
		i, j=1, \cdots, n  \mbox{ on }\Gamma,\\
		&&\quad\quad\quad\quad\quad\quad\quad\quad\quad\quad \quad\quad \quad\quad\quad 
		\quad\quad\quad\mbox{and }\norm{\tilde f}
		_{C^{2+\alpha, 1+\alpha/2}(\overline{\Sigma})}+\norm{\tilde g}_{C^{2+\alpha}(\overline\Omega)}\leq \delta \Big\}.
	\end{eqnarray*}
\end{thm}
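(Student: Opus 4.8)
The plan is to recast \eqref{eq:parabolic simul_forward} as a fixed-point problem on a small closed ball of $C^{2+\alpha,1+\alpha/2}(\overline{Q})$ and to invoke the Banach fixed-point theorem, solving the linearized equation at each step by Lemma \ref{wellpose1}. Since $b\in C^2(\overline{Q}\times\mathbb R)$ and $b(\cdot,\cdot,0)=0$, for $v$ in the ball
\[
Y_R:=\bigl\{\,v\in C^{2+\alpha,1+\alpha/2}(\overline{Q}):\ \|v\|_{C^{2+\alpha,1+\alpha/2}(\overline{Q})}\le R\,\bigr\}
\]
we factor the nonlinearity as $b(x,t,v(x,t))=r_v(x,t)\,v(x,t)$ with $r_v(x,t):=\int_0^1\partial_y b(x,t,\theta v(x,t))\,d\theta\in C^{\alpha,\alpha/2}(\overline{Q})$. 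We then set $\Phi(v):=u$, where $u\in C^{2+\alpha,1+\alpha/2}(\overline{Q})$ is the unique solution given by Lemma \ref{wellpose1}, applied with coefficient $q=r_v$ and right-hand side $h=0$, of
\[
u_t-\nabla\cdot(\tilde\gamma\nabla u)+r_v u=0\ \text{ in }\ Q,\qquad u=\tilde f\ \text{ on }\ \Sigma,\qquad u(\cdot,0)=\tilde g\ \text{ in }\ \Omega .
\]
A fixed point $u=\Phi(u)$ satisfies $r_u u=b(\cdot,\cdot,u)$ and hence solves \eqref{eq:parabolic simul_forward}, so it suffices to produce one.

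For $\Phi$ to be well defined, the compatibility conditions \eqref{compatibility conditions} must hold for every $v\in Y_R$: when $(\tilde f,\tilde g)\in V_\delta$ one has $\tilde g=\tilde f(\cdot,0)=0$ on $\Gamma$, and since $\tilde g$ together with all of its first- and second-order spatial derivatives vanishes on $\Gamma$, also $\nabla\cdot(\tilde\gamma(\cdot,0)\nabla\tilde g)=r_v(\cdot,0)\tilde g=0=\tilde f_t(\cdot,0)$ on $\Gamma$. For the self-mapping property, fix $R=1$. Because $b\in C^2$, the norms $\|r_v\|_{C^{\alpha,\alpha/2}(\overline{Q})}$ are bounded uniformly over $v\in Y_R$, so, since the constant in Lemma \ref{wellpose1} depends on $q$ only through $\|q\|_{C^{\alpha,\alpha/2}(\overline{Q})}$, it may be taken uniform over $v\in Y_R$, and
\[
\|\Phi(v)\|_{C^{2+\alpha,1+\alpha/2}(\overline{Q})}\le C_R\bigl(\|\tilde f\|_{C^{2+\alpha,1+\alpha/2}(\overline\Sigma)}+\|\tilde g\|_{C^{2+\alpha}(\overline\Omega)}\bigr)\le C_R\,\delta ;
\]
choosing $\delta\le R/C_R$ yields $\Phi(Y_R)\subseteq Y_R$. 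It is essential here to have absorbed the nonlinearity into the coefficient $q$ rather than keeping it as a source term $-b(\cdot,\cdot,v)$, for the latter would leave the additive contribution $\|b(\cdot,\cdot,v)\|_{C^{\alpha,\alpha/2}}$ on the right-hand side with a fixed Schauder constant in front, which, for a fixed time horizon $T$, need not be absorbable into the radius $R$.

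The main obstacle is the contraction step. For $v_1,v_2\in Y_R$ the difference $w:=\Phi(v_1)-\Phi(v_2)$ solves
\[
w_t-\nabla\cdot(\tilde\gamma\nabla w)+r_{v_1}w=(r_{v_2}-r_{v_1})\Phi(v_2)\ \text{ in }\ Q,\qquad w=0\ \text{ on }\ \Sigma,\qquad w(\cdot,0)=0 ,
\]
and one cannot close the estimate in $C^{2+\alpha,1+\alpha/2}$ via Lemma \ref{wellpose1}, since that would require $\|r_{v_1}-r_{v_2}\|_{C^{\alpha,\alpha/2}}\lesssim\|v_1-v_2\|_{C^{\alpha,\alpha/2}}$, hence H\"older continuity of $\partial_y^2 b$, which is not granted by \eqref{e9}. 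Instead I would run the contraction in the weaker norm $\|\cdot\|_{C(\overline{Q})}$: the weak maximum principle for divergence-form linear parabolic equations gives $\|w\|_{C(\overline{Q})}\le C(R,T)\,\|(r_{v_2}-r_{v_1})\Phi(v_2)\|_{C(\overline{Q})}$, and combined with the elementary bound $\|r_{v_1}-r_{v_2}\|_{C(\overline{Q})}\le\tfrac12\|\partial_y^2 b\|_{L^\infty(\overline{Q}\times[-R,R])}\|v_1-v_2\|_{C(\overline{Q})}$ (which uses only $b\in C^2$) and $\|\Phi(v_2)\|_{C(\overline{Q})}\le C_R\,\delta$, this gives $\|w\|_{C(\overline{Q})}\le C(R,T)\,C_R\,\delta\,\|v_1-v_2\|_{C(\overline{Q})}$. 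Shrinking $\delta$ once more makes the prefactor $<1$, so $\Phi$ is a contraction on $(Y_R,\|\cdot\|_{C(\overline{Q})})$, which is a complete metric space: by the compact embedding $C^{2+\alpha,1+\alpha/2}(\overline{Q})\hookrightarrow C^{2+\beta,1+\beta/2}(\overline{Q})$ for $\beta<\alpha$, a $C(\overline{Q})$-Cauchy sequence in $Y_R$ converges in $C^{2+\beta,1+\beta/2}(\overline{Q})$ to a limit which still satisfies the $C^{2+\alpha,1+\alpha/2}$ bound and hence lies in $Y_R$. The Banach fixed-point theorem now yields a unique $u\in Y_R\subset C^{2+\alpha,1+\alpha/2}(\overline{Q})$ with $\Phi(u)=u$; this is the desired solution, and its uniqueness within $Y_R$ extends to all of $C^{2+\alpha,1+\alpha/2}(\overline{Q})$ by the a priori bound of the preceding paragraph together with a routine continuation argument.
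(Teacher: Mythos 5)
Your proof is correct, but it follows a genuinely different route from the paper's. Both arguments reduce \eqref{eq:parabolic simul_forward} to the linear problem of Lemma \ref{wellpose1} via the same factorization $b(x,t,v)=q_v\,v$ with $q_v=\int_0^1\partial_y b(\cdot,\cdot,\theta v)\,d\theta$, and both verify the compatibility conditions \eqref{compatibility conditions} and the invariance of a small ball in exactly the way you do. The difference is the fixed-point theorem: the paper applies the \emph{Schauder} fixed-point theorem to the map $\Psi$ on the convex set $K=\{z\in C^{\alpha,\alpha/2}(\overline{Q}):\|z\|_{C^{\alpha,\alpha/2}}\le 1,\ z(\cdot,0)=\tilde g,\ z|_\Sigma=\tilde f\}$, viewed as a compact subset of $L^2(Q)$, so it only needs the self-mapping property (and, strictly, the continuity of $\Psi$ in $L^2$, which it leaves to the reader); you instead run the \emph{Banach} fixed-point theorem on a $C^{2+\alpha,1+\alpha/2}$-ball metrized by the sup norm, which costs you the extra contraction estimate (the maximum-principle bound together with $\|r_{v_1}-r_{v_2}\|_{C(\overline{Q})}\lesssim\|v_1-v_2\|_{C(\overline{Q})}$, both of which you justify correctly from $b\in C^2$) and the completeness check for $(Y_R,\|\cdot\|_{C(\overline{Q})})$, but buys you uniqueness of the fixed point, which Schauder's theorem does not provide even though the theorem asserts it. Your observation that the nonlinearity must be absorbed into the coefficient rather than treated as a source is also the reason the paper's $\Psi$ is built the same way. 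The one loose end is your final sentence: extending uniqueness from $Y_R$ to all of $C^{2+\alpha,1+\alpha/2}(\overline{Q})$ by ``a routine continuation argument'' is vaguer than necessary, since the a priori bound you cite only applies to functions already known to lie in $Y_R$; the clean fix is to note that the difference $w=u_1-u_2$ of any two solutions satisfies $w_t-\nabla\cdot(\tilde\gamma\nabla w)+c\,w=0$ with $c=\int_0^1\partial_y b(\cdot,\cdot,su_1+(1-s)u_2)\,ds\in L^\infty(Q)$ and vanishing initial and lateral data, hence $w\equiv 0$ by the maximum principle or an energy estimate. With that substitution your argument is complete and, on the uniqueness point, actually more thorough than the paper's.
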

\begin{proof}
	The proof can be accomplished by the fixed-point technique. First, we set
	$$
	K=\Big\{ z\in  C^{\alpha, \alpha/2}(\overline{Q}) \  \Big|\ \norm{z}_{C^{\alpha, \alpha/2}(\overline{Q})}\leq 1, 
	z(\cdot, 0)=\tilde g \mbox{ in }\Omega \mbox{ and }z=\tilde f \mbox{  on }\Sigma \Big\},
	$$
	where $(\tilde f,  \tilde g)\in V_\delta$ for a sufficiently small $\delta>0$.  It is straightfoward to show that $K$ is a nonempty convex and compact 
	subset in $L^2(Q)$. Also, we define
	\begin{equation*}
		q(x,t,s):=
		\begin{cases}
			\  \dfrac{b(x,t,s)}{s}   &\text{for }s\neq0,\\
			\      b_s(x,t,0)          &\text{for }s=0.
		\end{cases}
	\end{equation*}
	For  any $z\in K$, let us consider the following linear parabolic equation: 
	\begin{eqnarray}\label{e10}
		\begin{cases}
			u_t-\nabla\cdot(\tilde\gamma\nabla u)+q_z(x,  t)u=0   &\text{ in }\ Q,\\
			u=\tilde  f &\text{  on  }\ \Sigma, \\
			u(x,0)=\tilde g(x) &\text{ in }\ \Omega,
		\end{cases}
	\end{eqnarray}
	where $q_z(x, t)=q(x, t, z(x, t))$. 
	Define the  following map: 
	$$
	\Psi(z)=u, \quad\forall  \  z\in  K,
	$$
	where $u$ is the solution to (\ref{e10}) associated to $q_z$.
	By Lemma \ref{wellpose1},  it follows  that $u\in C^{2+\alpha, 1+\frac{\alpha}{2}}(\overline{Q})$.  Moreover,  
	$$\norm{u}_{C^{2+\alpha, 1+\alpha/2}(\overline{Q})}\leq 
	C(\tilde\gamma, b, n, \Omega, T)\Big(\norm{\tilde f}_{C^{2+\alpha, 1+\alpha/2}(\overline\Sigma)}
	+\norm{\tilde g}_{C^{2+\alpha}(\overline\Omega)}
	\Big)\leq C(\tilde\gamma, b, n, \Omega, T)\delta,$$
	where $C(\tilde\gamma, b, n, \Omega, T)$ denotes a  positive  constant, depending only on 
	$\tilde\gamma$, $b$, $n$, $\Omega$ and $T$. 
	Hence, when $\delta$ is sufficiently small, such that  $\|u\|_{C^{2+\alpha, 1+\alpha/2}(\overline{Q})}\leq 1$, 
	it  holds that   $\Psi(K)\subset  K$. By the Schauder fixed-point theorem,  it is ready to show that $\Psi$
	has a fixed point  in $K$,  which is the solution to \eqref{eq:parabolic simul_forward}.
	
	The proof is complete. 
	
\end{proof}

\begin{rmk}\label{remark 2.2}
	Regarding the local well-posedness, we make several remarks.
	\begin{itemize}
		
		\item[(a)]  The condition \eqref{e9} on $b=b(x,t,y)$ is not essential  and it is 
	 for convenience to express compatibility conditions. 
	Also,	 the admissible condition on $b(x,t,y)$ is not used in the proof of the local well-posedness, but it will be utilized in the proof of our simultaneously recovering inverse problem.

		\item[(b)] In \cite{isakov1993uniqueness_parabolic}, it is assumed that the coefficient $b=b(x,y)$, which is independent of $t$ and the condition $\p_yb(x,y)\geq 0$ for any $y\in \R$. In contrast, we provide different time-dependent nonlinearities, and utilize different techniques to study related inverse problems for semilinear parabolic equations.

		\item[(c)] In order to apply the higher order linearization method, we need the infinite differentiability of the equation with respect to the given lateral boundary data $f$, which can be shown by applying the implicit function theorem of Banach spaces. To see this, let us define the following spaces. 
		 Set
		\begin{eqnarray*}
			&&X_1=\Big\{ (f, g)\in C^{2+\alpha, 1+\alpha/2}(\overline{\Sigma}) \!\times\! C^{2+\alpha}(\overline\Omega) 
			\ \Big|\  f(x, 0)\Big|_{\Gamma}\!=\!f_t(x, 0)\Big|_{\Gamma}\!=\!0,\\
			&&\quad\quad\quad\quad\quad\quad\quad\quad\quad g\Big|_{\Gamma}=g_{x_i}\Big|_{\Gamma}=g_{x_i x_j}
			\Big|_{\Gamma}=0, i, j=1, \cdots, n \Big\},\\
			&&X_2=\Big\{  u\in C^{2+\alpha, 1+\alpha/2}(\overline{Q})  \  \Big|\   
			u(x, 0)\Big|_{\Gamma}\!=\!u_t(x, 0)\Big|_{\Gamma}\!=\!0,\\
			&&\quad\quad\quad\quad\quad u(x, 0)\Big|_{\Gamma}=u_{x_i}(x, 0)\Big|_{\Gamma}=u_{x_i x_j}(x, 0)
			\Big|_{\Gamma}=0, i, j=1, \cdots, n,\\
			&&\quad\quad\quad\quad\quad   \Big[u_{t}(x, 0)- \nabla\cdot(\gamma(x, 0)\nabla u(x, 0))\Big]\Big|_{\Gamma}=0 
			\Big\},\\
			&&\mbox{and }
			X_3=\Big\{  h\in C^{\alpha, \alpha/2}(\overline{Q})\  \Big|\  h(x, 0)\Big|_\Gamma=0 \Big\}\times X_1.
		\end{eqnarray*}
			We can consider the map 
	$\mathcal G: X_1\times X_2\to X_3$ by 
	$$\mathcal G(f,g,u)=\Big( u_t-\nabla\cdot(\gamma\nabla u)+b(x,t,u),u\Big|_{\Sigma}-f,u(x,0)-g \Big).$$
		Then $\mathcal G(0,0,0)=0$ and $\mathcal  G_{u}(0,0,0): X_2\to X_3$ is given by
		$$\mathcal G_u(0,0,0)v=\Big( v_t-\nabla\cdot(\gamma\nabla v)+b_u(\cdot,\cdot,0)v,v\Big|_{\Sigma},v(x,0)\Big).$$
		It is straightforward to show that $\mathcal G_u(0, 0, 0)$ is a linear isomorphism from $X_2$ to $X_3$ by  Lemma $\ref{wellpose1}$.
		By the implicit function theorem in Banach spaces, there exists  a positive constant  $\delta$, and a  holomorphic map 
		$S:  V_\delta\to C^{2+\alpha,  1+\alpha/2}(\overline{Q})$, such that 
		for any $(f,g)\in V_\delta$, we have $G(f,g,S(f,g))=0.$  Set $u=S(f,g)$ and this  implies  the local well-posedness of  \eqref{eq:parabolic simul_forward}.
		
		\item[(d)] Notice that the maps of boundary data to the solution is $C^\infty$-Fr\'echet differentiable, then we can also derive the corresponding DN map is also $C^\infty$-Fr\'echet differentiable.
		
	\end{itemize}
\end{rmk}

Next, for a different nonlinearity, let us consider the global well-posedness of strong solutions to the  semilinear parabolic equation:
\begin{equation}\label{e1}
	\begin{cases}
		u_t-\nabla\cdot(\gamma\nabla u)+a(x,t,u)=0   &\text{ in }\ Q,\\
		u=f &\text{ on  }\ \Sigma,\\
		u(x,0)=g(x) &\text{ in }\ \Omega,
	\end{cases}
\end{equation}
where $\gamma$ is  symmetric and  uniformly positive definite   with $\gamma\in  C^{1,  0}(\overline{Q}; \mathbb R^{n\times n})$,  $g\in H_0^1(\Omega)$,  
$f\in H^{\frac{3}{2},\frac{3}{4}}(\Sigma)$ with $f(\cdot,  0)=0$ on $\Gamma$, 
and $a: Q\times\mathbb  R\rightarrow\mathbb R$ satisfies 
\begin{equation}\label{e2}
	a(\cdot, \cdot,  0)\in L^2(Q),\quad a(x, t, \cdot)\in C^1(\mathbb R) 
\end{equation} and the  increasing condition (\ref{condition of nonlinear f at infinity data}).

The global well-posedness result of (\ref{e1}) is stated  as follows. 
\begin{thm}[Global well-posedness]	Assume that  $a$ satisfies  $(\ref{e2})$  and $(\ref{condition of nonlinear f at infinity data})$. 
	Then for any  $g\in H_0^1(\Omega)$ and
	$f\in H^{\frac{3}{2},\frac{3}{4}}(\Sigma)$ with $f(\cdot,  0)=0$ on $\Gamma$,   the semilinear parabolic equation $(\ref{e1})$ admits a unique strong solution $u\in H^{2, 1}(Q)$. 
\end{thm}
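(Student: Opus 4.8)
The plan is to produce the strong solution by Galerkin approximation, with the borderline growth condition \eqref{condition of nonlinear f at infinity data} providing the \emph{a priori} bound that prevents finite-time blow-up, and then to bootstrap to $H^{2,1}(Q)$-regularity and settle uniqueness by an energy estimate for the difference of two solutions. First I would reduce to homogeneous lateral data: choose a lifting $F\in H^{2,1}(Q)$ of $f$, e.g. the solution of the linear problem $F_t-\nabla\cdot(\gamma\nabla F)=0$ in $Q$, $F=f$ on $\Sigma$, $F(\cdot,0)=0$ in $\Omega$, which exists in $H^{2,1}(Q)$ by linear parabolic theory (here $f\in H^{3/2,3/4}(\Sigma)$ with $f(\cdot,0)=0$ is exactly the compatibility needed against the zero initial datum). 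Then $v:=u-F$ must solve $v_t-\nabla\cdot(\gamma\nabla v)+a(x,t,v+F)=0$ in $Q$, $v=0$ on $\Sigma$, $v(\cdot,0)=g\in H^1_0(\Omega)$. From \eqref{e2} and \eqref{condition of nonlinear f at infinity data} one has the pointwise bound $|a(x,t,y)|\le |a(x,t,0)|+\varepsilon\,|y|\ln^{1/2}(e+|y|)+C_\varepsilon|y|$ for every $\varepsilon>0$; combined with $F\in L^q(Q)$ for a parabolic Sobolev exponent $q>2$, this gives $a(\cdot,\cdot,F)\in L^2(Q)$, so it suffices to treat the reduced problem.

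Next I would run the Galerkin scheme in the Dirichlet eigenbasis $\{\phi_k\}$ of $-\Delta$ on $\Omega$, obtaining $v_m=\sum_{k\le m}c_k^m(t)\phi_k$ solving the projected ODE system (locally in time by Peano's theorem). The crux is the uniform energy estimate. Testing with $v_m$ gives $\frac12\frac{d}{dt}\|v_m\|_{L^2}^2+\rho_0\|\nabla v_m\|_{L^2}^2\le -\int_\Omega a(x,t,v_m+F)\,v_m$; inserting the pointwise bound above (with argument $|v_m+F|$), the only dangerous term is $\varepsilon\int_\Omega |v_m|^2\ln^{1/2}(e+|v_m|)$, which by a logarithmic Sobolev / Moser--Trudinger type inequality is controlled by $\varepsilon\,\|v_m\|_{L^2}^2\ln^{1/2}(e+C\|v_m\|_{H^1_0}/\|v_m\|_{L^2})$, and since $\ln^{1/2}s=o(s)$ this is absorbed into $\frac{\rho_0}{2}\|\nabla v_m\|_{L^2}^2$ plus a $C\|v_m\|_{L^2}^2$ term once $\varepsilon$ is chosen small relative to $\rho_0$ (legitimate because the $\limsup$ in \eqref{condition of nonlinear f at infinity data} is $0$). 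Gronwall then yields $\|v_m\|_{L^\infty(0,T;L^2)}+\|v_m\|_{L^2(0,T;H^1_0)}\le C$ uniformly in $m$, hence global existence of the $v_m$. A second estimate, testing with $\partial_t v_m$ (the boundary term at $t=0$ is finite because $g\in H^1_0(\Omega)$), combined with the fact that $a(\cdot,\cdot,v_m+F)$ is bounded in $L^2(Q)$ uniformly in $m$ — which follows from the first estimate, the parabolic interpolation $v_m\in L^{2(n+2)/n}(Q)$, and the sub-polynomial $\ln^{1/2}$ growth — gives $\|v_m\|_{H^{2,1}(Q)}\le C$ uniformly.

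To conclude existence I would pass to the limit: by weak compactness and the Aubin--Lions lemma, a subsequence satisfies $v_m\rightharpoonup v$ in $H^{2,1}(Q)$, $v_m\to v$ in $L^2(Q)$ and a.e.; since $y\mapsto a(x,t,y)$ is continuous and $\{a(\cdot,\cdot,v_m+F)\}$ is $L^2(Q)$-bounded, it converges weakly to $a(\cdot,\cdot,v+F)$ in $L^2(Q)$, so $v$ solves the reduced equation, $v\in H^{2,1}(Q)\hookrightarrow C([0,T];H^1_0(\Omega))$ attains $v(\cdot,0)=g$, and $u:=v+F\in H^{2,1}(Q)$ is the desired strong solution (with $\partial_\nu u\in L^2(\Sigma)$ by the trace theorem). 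For uniqueness, if $u_1,u_2$ are two strong solutions, then $w:=u_1-u_2$ solves $w_t-\nabla\cdot(\gamma\nabla w)+\big(\int_0^1\partial_y a(x,t,u_2+\theta w)\,d\theta\big)w=0$, $w=0$ on $\Sigma$, $w(\cdot,0)=0$; testing with $w$, using $|\partial_y a(x,t,y)|\le\varepsilon\ln^{1/2}(e+|y|)+C_\varepsilon$ together with H\"older's inequality, the parabolic embedding $u_i\in L^q(Q)$, and $\|w\|_{L^{2^*}(\Omega)}\le C\|\nabla w\|_{L^2(\Omega)}$ to absorb the gradient, one obtains $\frac{d}{dt}\|w\|_{L^2}^2\le C(t)\|w\|_{L^2}^2$ with $C(\cdot)\in L^1(0,T)$, and Gronwall forces $w\equiv 0$.

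The main obstacle is the uniform $L^2$ energy estimate: condition \eqref{condition of nonlinear f at infinity data} is precisely the threshold that rules out finite-time blow-up, and extracting that from it requires a logarithmic Sobolev / Moser--Trudinger inequality to tame $\int_\Omega|u|^2\ln^{1/2}(e+|u|)$ and to close the Gronwall loop using the smallness of $\varepsilon$. A secondary technical point is the uniform $L^2(Q)$-bound on the composed nonlinearity $a(\cdot,\cdot,u_m+F)$ needed both for the $H^{2,1}$ bootstrap and for the passage to the limit; this again rests on the interplay between the $\ln^{1/2}$ growth and the parabolic Sobolev embeddings (with a mild iteration in high space dimensions), but involves no genuinely new idea beyond the first estimate.
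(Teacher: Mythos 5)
Your proposal is correct in outline, but it takes a genuinely different route from the paper's own proof. The paper neither lifts the boundary data nor runs a Galerkin scheme: it freezes the nonlinearity into a potential by setting $q(x,t,s)=(a(x,t,s)-a(x,t,0))/s$, considers for each $z\in L^2(Q)$ the \emph{linear} problem with zeroth-order coefficient $a_z(x,t)=q(x,t,z(x,t))$, and uses the growth condition \eqref{condition of nonlinear f at infinity data} to show $\int_Q e^{C|a_z|^2}\,dx\,dt\le C\bigl(1+\|z\|^2_{L^2(Q)}\bigr)$, hence $a_z\in L^{n+2}(Q)$ with a norm growing so slowly in $\|z\|_{L^2(Q)}$ that the compact linear solution map $z\mapsto u$ sends a sufficiently large ball of $L^2(Q)$ into itself; Schauder's fixed point theorem then gives existence, with the $H^{2,1}$ regularity and energy estimate imported wholesale from the quoted linear theory. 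Your Galerkin route is more self-contained and, notably, supplies an explicit uniqueness argument --- the paper's proof as written only produces existence and never addresses uniqueness --- at the price of having to prove yourself that $\int_\Omega|v|^2\ln^{1/2}(e+|v|)\,dx$ is absorbed by the gradient term; that absorption does go through (most simply via $\ln^{1/2}(e+s)\le C_\delta+\delta s^{2/n}$ followed by Gagliardo--Nirenberg and Young, with the smallness coming from $\delta$ rather than from a named logarithmic Sobolev inequality, and with the contribution of $F$ inside the logarithm handled by its $L^{q}(Q)$, $q>2$, integrability), and your uniform $L^2(Q)$ bound on $a(\cdot,\cdot,v_m+F)$ via the parabolic embedding into $L^{2(n+2)/n}(Q)$ is exactly the right mechanism. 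The one step I would flag is the claim that $\|v_m\|_{H^{2,1}(Q)}\le C$ uniformly for the Galerkin approximants: since $v_m$ only satisfies the projected equation, the spatial $H^2$ bound is more cleanly obtained by first passing to the limit with the $L^\infty(0,T;H^1_0)$ and $\partial_t$-estimates and then applying elliptic regularity to the limit $v$, which satisfies the equation exactly; this is a standard rearrangement, not a gap.
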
	
\begin{proof}
	First,  let us set 
	\begin{equation*}
		q(x,t,s):=
		\begin{cases}
			\  \dfrac{a(x,t,s)-a(x,t,0)}{s}   &\text{for }s\neq0,\\
			\      a_s(x,t,0)          &\text{for }s=0.
		\end{cases}
	\end{equation*}
	For any $z\in L^{2}(Q)$, consider the  following linear parabolic equation:
	\begin{eqnarray}\label{e4}
		\begin{cases}
			u_t-\nabla\cdot(\gamma\nabla u)+a_z(x,t)u+a(x,t,0)=0   &\text{ in }\ Q,\\
			u=f &\text{  on  }\ \Sigma, \\
			u(x,0)=g(x) &\text{ in }\ \Omega,
		\end{cases}
	\end{eqnarray}
	where $a_z(x,t)=q(x,t,z(x,t)).$ By the condition (\ref{condition of nonlinear f at infinity data}), we have that $a_z(\cdot, \cdot)\in L^{n+2}(Q).$ 
	Indeed, there exist  positive constants,  denoted by $C$,  which 
	may be different in  one place or another, such that
	\begin{align}\label{e8}
		\begin{split}
			&\displaystyle\int_Q |a_z(x, t)|^{n+2}\, dxdt
			=\int^T_0\norm{a_z(\cdot, t)}^{n+2}_{L^{n+2}(\Omega)}\, dt\\
			\displaystyle\leq &C\!+\!C\int^T_0  e^{\norm{a_z(\cdot, t)}^2_{L^{n+2}(\Omega)}}\, dt
			\displaystyle
			=\!C\!+\!C\int^T_0\sum\limits_{j=0}^{\infty} \frac{1}{j!} \norm{a_z(\cdot, t)}_{L^{n+2}(\Omega)}^{2j} \, dt\\
			\displaystyle=&C+\!C\!\int^T_0\sum\limits_{j=0}^{n+2} \frac{1}{j!} \norm{a_z(\cdot, t)}_{L^{n+2}(\Omega)}^{2j} \, dt+
			C\!\int^T_0\sum\limits_{j=n+3}^{\infty} \frac{1}{j!} \norm{a_z(\cdot, t)}_{L^{n+2}(\Omega)}^{2j} \, dt\\
			\displaystyle\leq & C+C\int^T_0\sum\limits_{j=n+3}^{\infty} \frac{1}{j!} \norm{a_z(\cdot, t)}_{L^{n+2}(\Omega)}^{2j} \, dt\\
			\displaystyle= &
			C+C\int^T_0\sum\limits_{j=n+3}^{\infty} \frac{1}{j!} \left(\int_\Omega |a_z(x, t)|^{n+2}dx\right)^{\frac{2j}{n+2}}\, dt\\
			\displaystyle\leq &C\!+\!C\int^T_0\sum\limits_{j=n+3}^{\infty} \frac{C^j}{j!} \int_\Omega |a_z(x, t)|^{2j}\, dx dt
			\leq C\!+\!C\int_Q  e^{C|a_z(x, t)|^2}dxdt.
		\end{split}
	\end{align}
	By  the condition (\ref{condition of nonlinear f at infinity data}), for any $\epsilon>0$, there always is a positive constant $C_\epsilon$, such that 
	for any $z\in L^2(Q)$, it holds that
	$$
	|a_s(x, t, z(x, t))|^2\leq \epsilon \mbox{ln}|z(x,  t)|+C_\epsilon.
	$$
	Hence,  for a sufficient small $\epsilon$,  
	\begin{align}\label{e5}
		\begin{split}
		&	\displaystyle\int_Q  e^{C|a_z(x, t)|^2}\, dxdt\leq 
			\int_Q e^{C[\epsilon\mbox{ln}(1+|z(x, t)|)+C_\epsilon]}\, dxdt\\
			\displaystyle\leq &C\int_Q (1+|z(x, t)|)^{C\epsilon}\, dxdt 
			\leq  C\LC 1+\norm{z}^2_{L^2(Q)}\RC.
		\end{split}
	\end{align}
	(\ref{e8}) and (\ref{e5}) imply that $a_z\in L^{n+2}(Q)$. 
	
	\smallskip
	
	By \cite{ladyzhenskaia1988linear},   the linear parabolic equation (\ref{e4}) admits   a unique strong solution 
	$u\in H^{2,  1}(Q)$.    Moreover, by the energy estimate,  it is easy to find that 
	\begin{align}\label{e6}
		\begin{split}
			&\displaystyle\norm{u}^2_{L^2(0, T; H^1(\Omega))\cap C([0, T]; L^2(\Omega))}+
			\norm{u_t}^2_{L^2(0, T; H^{-1}(\Omega))}\\
			\displaystyle\leq & Ce^{T\norm{a_z}^2_{L^{n+2}(Q)}}\left(\norm{a(\cdot, \cdot, 0)}_{L^2(Q)}^2+
			\norm{g}^2_{L^2(\Omega)}+\norm{f}^2_{H^{1,  0}(\Sigma)}\right).
		\end{split}
	\end{align}
	Define the following map:
	$$\mathcal G: L^{2}(Q)\to L^{2}(Q)$$ by
	$$\mathcal G(z)=u,$$
	where $u$ is the  solution to the equation (\ref{e4}) associated to $a_z$. 
	Obviously, $\mathcal{G}$ is well-posed and compact. 
	Define 
	$$
	V=\Big\{ z\in L^2(Q)\ \Big|\  \norm{z}_{L^2(Q)}\leq C^* \Big\},
	$$
	where $C^*$ will be  specified later.  By (\ref{e8})-(\ref{e6}),  
	$$
	\norm{u}_{L^2(Q)}^2
	\leq  C\left(\norm{a(\cdot, \cdot, 0)}_{L^2(Q)}^2+
	\norm{g}^2_{L^2(\Omega)}+\norm{f}^2_{H^{1, 0}(\Sigma)}\right)\left(1+\norm{z}_{L^2(Q)}\right).
	$$
	Indeed, we may choose $\epsilon=1/C$ in  (\ref{e5}). 	
	It follows that  there exists a $C^*>0$, such that 
	$\mathcal{G}(V)\subseteq V$.  By the Schauder fixed point theorem, it is easy to check that 
	$\mathcal{G}$ has a fixed point in $V$, which is the solution to (\ref{e1}) in  $H^{2, 1}(Q)$.
	\end{proof}

\section{Unique determination of initial data}\label{Sec 3}

In this section, we present proofs of Theorems~\ref{Main Thm 1} and \ref{Main Thm 2} concerning the first two inverse problems of this paper.

\subsection{Determination by passive measurement}

In  order  to prove Theorem \ref{Main Thm 1},  
we first present two Carleman estimates for the following linear parabolic equation:
\begin{eqnarray}\label{l1}
	\begin{cases}
		u_t-\nabla\cdot(\gamma\nabla u)+A(x,t)u=F(x, t)   &\text{ in }\ Q,\\
		u=0 &\text{  on  }\ \Sigma, \\
		u(x,0)=g(x) &\text{ in }\ \Omega,
	\end{cases}
\end{eqnarray}
where $\gamma$  is the same as the one  in (\ref{eq:parabolic1}),  $A\in L^\infty(0, T; L^{2n}(\Omega))$,   $F\in  L^2(Q)$  and $g\in H^1_0(\Omega)$.

As preliminaries,  for two parameters  $\lambda, \mu\geq 1$, we introduce the following functions:
$$
\eta(x, t)=\displaystyle\frac{e^{\mu \psi(x)}-e^{2\mu\norm{\psi}_{C(\overline\Omega)}}}{t^2(T-t)^2}, 
\quad   \varphi(x, t)=\displaystyle\frac{e^{\mu\psi(x)}}{t^2(T-t)^2} \quad \mbox{ and }  \quad
\theta_1(x, t)=e^{\lambda \eta(x, t)}, 
$$
where  $\psi(\cdot)\in C^4(\overline{\Omega})$ satisfies that $\psi(x)>0$ in  $\Omega$, 
$|\nabla  \psi(x)|>0$ in $\overline{\Omega}$  and  
$$\sum\limits_{i,  j=1}^n \gamma_{i j}\psi_{x_i}\nu_j\leq 0\quad\mbox{  on   }\quad (\Gamma\setminus\Gamma_0)\times(0, T).$$
Also, for any $L>0$, there exist $t_0\in (0, T)$ and $K>0$,  such that
$$
K+t_0<\min\left\{1,  \frac{1}{2L}\right\}.
$$
Set $\theta_2(t)=\displaystyle\frac{1}{K+t_0-t}$ for  $t\in [0,  t_0]$.

The first Carleman estimate is stated as follows.

\begin{lem}\label{lemma1}
	There  exist  positive  constants $\lambda_0$, $\mu_0$ and $C$,  such that for any $\lambda\geq  \lambda_0$
	and $\mu\geq  \mu_0$,   the following estimate  holds for any solution to $(\ref{l1})$:
	\begin{align}\label{l2}
		\begin{split}
			&\displaystyle\int_Q \theta_1^2\Big(
			\lambda\mu^2\varphi|\nabla u|^2+\lambda^3\mu^4\varphi^3 u^2\Big)\, 
			dxdt\\
			\leq & C\displaystyle\int_Q \theta_1^2F^2\, dxdt+
			C\displaystyle\int^T_0\int_{\Gamma_0}
			\theta_1^2\lambda\mu \varphi\left.|\p_\nu u \right|^2\, dSdt.
		\end{split}
	\end{align}
\end{lem}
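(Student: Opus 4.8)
The estimate \eqref{l2} is a global Carleman inequality of Fursikov--Imanuvilov type for the variable-coefficient heat operator, and the plan is to prove it along the classical lines; the only genuine modifications are the $x$-dependent matrix $\gamma$ in the principal part and the potential $A\in L^\infty(0,T;L^{2n}(\Omega))$, which will be treated as a lower-order perturbation via a Sobolev estimate. \textbf{Conjugation.} First I would set $v=\theta_1u=e^{\lambda\eta}u$, so that $v|_\Sigma=0$ and $v$ together with its derivatives vanishes as $t\to0^+$ and $t\to T^-$ because $\eta\to-\infty$ there. Using $\eta_{x_i}=\mu\psi_{x_i}\varphi$ and $\eta_{x_ix_j}=\mu(\psi_{x_ix_j}+\mu\psi_{x_i}\psi_{x_j})\varphi$, a direct computation yields
\[
\theta_1\big(u_t-\nabla\cdot(\gamma\nabla u)\big)=P_1v+P_2v+Rv,
\]
where $P_1v$ is the formally symmetric part (comprising $-\nabla\cdot(\gamma\nabla v)$, the dominant zeroth-order term $-\lambda^2\mu^2\varphi^2(\gamma\nabla\psi\cdot\nabla\psi)v$ and $-\lambda\eta_tv$), $P_2v$ the antisymmetric part (comprising $v_t$ and $2\lambda\mu\varphi\,\gamma\nabla\psi\cdot\nabla v$), and $Rv$ gathers all remaining terms, each carrying strictly fewer powers of $\lambda$, or of $\mu$, than the dominant terms produced below. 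Since $u$ solves \eqref{l1}, the left-hand side equals $\theta_1(F-Au)$, hence $P_1v+P_2v=\theta_1F-\theta_1Au-Rv$.

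\textbf{The cross term.} Squaring this identity in $L^2(Q)$ and discarding $\|P_1v\|^2+\|P_2v\|^2\ge0$ reduces matters to a lower bound for $2(P_1v,P_2v)_{L^2(Q)}$. Integrating by parts in $x$ and $t$ — the $t=0,T$ contributions vanish by the degeneracy of $\theta_1$, and the $\Sigma$-contributions involving $v$ itself (though not those involving $\partial_\nu v$) vanish since $v|_\Sigma=0$ — and using the uniform ellipticity of $\gamma$ together with $|\nabla\psi|>0$ on $\overline\Omega$, one checks that for $\mu\ge\mu_0$ and $\lambda\ge\lambda_0$ the interior part of the cross term is bounded below by $c\int_Q(\lambda^3\mu^4\varphi^3v^2+\lambda\mu^2\varphi|\nabla v|^2)\,dxdt$, modulo terms absorbable into it. Because $v|_\Sigma=0$ forces $\nabla v=(\partial_\nu v)\nu$ on $\Sigma$, the sole surviving boundary term is of the form $-c\lambda\mu\int_0^T\!\!\int_\Gamma\varphi(\gamma\nabla\psi\cdot\nu)|\partial_\nu v|^2\,dSdt$: on $(\Gamma\setminus\Gamma_0)\times(0,T)$ the hypothesis $\sum_{i,j}\gamma_{ij}\psi_{x_i}\nu_j\le0$ renders it nonnegative, so it is dropped, while on $\Gamma_0\times(0,T)$ it is majorized by $C\lambda\mu\int_0^T\!\!\int_{\Gamma_0}\varphi|\partial_\nu v|^2\,dSdt$ and moved to the right-hand side.

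\textbf{Absorption and return to $u$.} Collecting the above, with the $\|Rv\|^2$-terms and the subleading parts of the cross term absorbed into the left (legitimate because $\varphi\ge16/T^4>0$ on $(0,T)$, so each additional power of $\lambda$ or $\mu$ dominates a fixed lower-order factor), one obtains, for $\lambda\ge\lambda_0$, $\mu\ge\mu_0$,
\begin{align*}
\int_Q\big(\lambda^3\mu^4\varphi^3v^2+\lambda\mu^2\varphi|\nabla v|^2\big)\,dxdt
&\le C\int_Q\theta_1^2F^2\,dxdt+C\int_Q\theta_1^2A^2u^2\,dxdt\\
&\quad+C\lambda\mu\int_0^T\!\!\int_{\Gamma_0}\varphi|\partial_\nu v|^2\,dSdt.
\end{align*}
The potential term is dealt with by writing $\theta_1^2A^2u^2=A^2v^2$, applying H\"older's inequality $\int_\Omega A^2v^2\,dx\le\|A(\cdot,t)\|_{L^{2n}(\Omega)}^2\|v(\cdot,t)\|_{L^{2n/(n-1)}(\Omega)}^2$ and the Sobolev interpolation $\|v(\cdot,t)\|_{L^{2n/(n-1)}(\Omega)}^2\le\epsilon\|\nabla v(\cdot,t)\|_{L^2(\Omega)}^2+C_\epsilon\|v(\cdot,t)\|_{L^2(\Omega)}^2$, then using $|\nabla v|^2\lesssim\theta_1^2|\nabla u|^2+\lambda^2\mu^2\varphi^2v^2$; integrating in $t$, choosing $\epsilon$ comparable to $\|A\|_{L^\infty(0,T;L^{2n}(\Omega))}^{-2}$ and enlarging $\lambda_0$ accordingly absorbs it into the left-hand side. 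Finally, undoing the substitution through $\theta_1^2u^2=v^2$, $\theta_1^2|\nabla u|^2\le2|\nabla v|^2+2\lambda^2\mu^2\varphi^2v^2$ (with $\lambda^2\mu^2\varphi^2v^2$ dominated by $\lambda^3\mu^4\varphi^3v^2$), and $\partial_\nu v=\theta_1\partial_\nu u$ on $\Gamma$ (again because $u=0$ there), gives exactly \eqref{l2}.

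\textbf{Main obstacle.} The hard part will be the bookkeeping in the cross-term computation: one must verify that, after all integrations by parts and cancellations, the only genuinely positive interior terms are $\lambda^3\mu^4\varphi^3v^2$ and $\lambda\mu^2\varphi|\nabla v|^2$ with exactly these powers, that every other interior term carries strictly fewer powers of $\lambda$ or $\mu$ (which is also where the blow-up of $\eta_t$ and $\eta_{tt}$ near $t=0,T$ must be controlled, so that successively taking $\mu$ and then $\lambda$ large absorbs them), and — most delicately — that the $\Sigma$-boundary term emerges with the factor $\gamma\nabla\psi\cdot\nu$, so that the sign condition on $\Gamma\setminus\Gamma_0$ can be invoked. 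The Sobolev absorption of the potential and the passage back from $v$ to $u$ are then routine once the clean estimate with $A\equiv0$ is established.
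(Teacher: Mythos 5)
Your proposal is correct and follows essentially the same route as the paper: the paper simply cites the $A\equiv 0$ Carleman estimate from \cite{yuan2017conditional} (whose proof is exactly the conjugation/cross-term argument you sketch, including the role of the sign condition on $\gamma\nabla\psi\cdot\nu$ over $\Gamma\setminus\Gamma_0$), and then absorbs the potential term by the same H\"older--Sobolev device you use, bounding $\int_Q\theta_1^2A^2u^2\,dxdt$ by $C\int_Q\theta_1^2(|\nabla u|^2+\lambda^2\mu^2\varphi^2u^2)\,dxdt$ and taking $\mu_0$ large. Your interpolation through $L^{2n/(n-1)}(\Omega)$ in place of the paper's $\|\theta_1u\|_{L^2}\|\theta_1u\|_{L^{2n/(n-2)}}$ pairing is an inessential variant.
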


\begin{proof}
	The proof is inspired by \cite[Theorem 2.2]{yuan2017conditional}.  In fact, when $A\equiv 0$, the estimate 
	(\ref{l2}) holds true for any solution to (\ref{l1}). If $A\in L^\infty(0, T; L^{2n}(\Omega))$,
	we have that
	\begin{align*}
		\begin{split} 
			&\displaystyle\int_Q \theta_1^2\Big(
			\lambda\mu^2\varphi|\nabla u|^2+\lambda^3\mu^4\varphi^3 u^2\Big)\, 
			dxdt\\
			\leq  &C\displaystyle\int_Q \theta_1^2(F-Au)^2\, dxdt+
			C\displaystyle\int^T_0\int_{\Gamma_0} \theta_1^2 \lambda\mu
			\varphi\left|\partial_\nu u\right|^2\, dSdt.
		\end{split}
	\end{align*}
	Notice that when $n\geq 3$,
	\begin{align*}
		&\displaystyle\int_Q  \theta_1^2A^2u^2\, dxdt 
		\leq \int^T_0\norm{A}_{L^{2n}(\Omega)}^2\norm{\theta_1 u}_{L^2(\Omega)}\norm{\theta_1 u}
		_{L^{\frac{2n}{n-2}}(\Omega)}\, dt\\
		\leq  &C\int_0^T\Big( \norm{\theta_1 u}^2_{L^2(\Omega)}+
		\norm{\nabla (\theta_1 u)}^2_{L^2(\Omega)}\Big) \, dt
		\leq C\int_Q \theta_1^2\Big(|\nabla u|^2+\lambda^2\mu^2\varphi^2  u^2\Big)\, dxdt.
	\end{align*}
	When  $n=1$ and $n=2$,  the term $\norm{\theta_1 u}
		_{L^{\frac{2n}{n-2}}(\Omega)}$ can be replaced by $\norm{\theta_1 u}
		_{L^{\infty}(\Omega)}$ and $\norm{\theta_1 u}
		_{L^{4}(\Omega)}$,  respectively.
	Hence,  when  $\mu_0$  is sufficiently large, (\ref{l2})  holds for any solution to (\ref{l1}).
\end{proof}

The second Carleman estimate is  given as follows.

\begin{lem}\label{lemma2} Assume that $T\in  (0, 1)$. Then there exists   a positive constant $L_0$,  such that
	for any  $L\geq L_0$,   $t_0\in  (0, T)$  and $K>0$ with 
	$$
	K+t_0<\min\left\{1,  \displaystyle\frac{1}{2L}\right\},
	$$
	one can  always find  positive constants $\lambda_0$ and $C$,  so that for any $\lambda\geq \lambda_0$,  
	the following  estimate  holds for any solution to $(\ref{l1})$:
	\begin{align}\label{l3}
		\begin{split}
			&\displaystyle\int_0^{t_0}\int_\Omega \theta_2^{2\lambda}\Big(
			\lambda\theta_2^2u^2+L \sum\limits_{i, j=1}^n \gamma_{i j}u_{x_i}u_{x_j}\Big)\, 
			dxdt
			+\displaystyle\int_\Omega \frac{\lambda}{(K+t_0)^{2\lambda+1}}u^2(x, 0)\, dx
			\\
			\leq  &\displaystyle\int_\Omega\frac{\lambda}{K^{2\lambda+1}}u^2(x, t_0)dx
			+\displaystyle\int_\Omega\frac{1}{(K+t_0)^{2\lambda}}\sum\limits_{i, j=1}^n\gamma_{i  j}(x, 0)u_{x_i}(x, 0)u_{x_j}(x, 0)\, dx\\
			&\quad
			+C\displaystyle\int_0^{t_0}\int_\Omega \theta_2^{2\lambda} F^2 \, dxdt.
		\end{split}
	\end{align}
\end{lem}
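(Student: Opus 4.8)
The plan is to prove \eqref{l3} by a weighted energy (multiplier) argument: unlike the weight in \eqref{l2}, here $\theta_2^{2\lambda}=(K+t_0-t)^{-2\lambda}$ depends on $t$ alone, so no spatial Carleman calculus is required and a single well-chosen multiplier suffices. It is convenient to pass to the new unknown $v:=\theta_2^{\lambda}u$. Since $\theta_2'=\theta_2^{2}$, one checks that $v$ solves
\[
v_t-\nabla\cdot(\gamma\nabla v)+Av-\lambda\theta_2\,v=\theta_2^{\lambda}F=:G\ \text{ in }Q,\qquad v\big|_{\Sigma}=0,\qquad v(\cdot,0)=(K+t_0)^{-\lambda}g,
\]
and, writing $Q_{t_0}:=\Omega\times(0,t_0)$, the three quantities on the left of \eqref{l3} become $\lambda\int_{Q_{t_0}}\theta_2^{2}v^2$, $L\int_{Q_{t_0}}\gamma\nabla v\cdot\nabla v$ and $\lambda\theta_2(0)\int_\Omega v^2(\cdot,0)$, while the first two terms on the right become $\lambda\theta_2(t_0)\int_\Omega v^2(\cdot,t_0)$ and $\int_\Omega\gamma(\cdot,0)\nabla v(\cdot,0)\cdot\nabla v(\cdot,0)$. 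As in Lemma \ref{lemma1} I would first establish the estimate for $A\equiv0$ and then add the potential term as a perturbation.

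Testing the equation for $v$ with $2v_t$ over $Q_{t_0}$ and integrating by parts (using $v|_{\Sigma}=0$, hence $v_t|_{\Sigma}=0$, so all lateral boundary terms vanish) is the heart of the matter. Working with $v$ rather than $u$ is essential: in $2\int_{Q_{t_0}}\gamma\nabla v\cdot\nabla v_t$ the expansion $\nabla v_t=\lambda\theta_2^{\lambda+1}\nabla u+\theta_2^{\lambda}\nabla u_t$ generates a term $2\lambda\int_{Q_{t_0}}\theta_2^{2\lambda+1}\gamma\nabla u\cdot\nabla u$ which exactly cancels the one produced by integrating $\int_{Q_{t_0}}\theta_2^{2\lambda}\partial_t(\gamma\nabla u\cdot\nabla u)$ by parts in $t$; what survives from that line is only the time-boundary pair $\theta_2(t_0)^{2\lambda}\int_\Omega\gamma(\cdot,t_0)\nabla u(\cdot,t_0)\cdot\nabla u(\cdot,t_0)-\theta_2(0)^{2\lambda}\int_\Omega\gamma(\cdot,0)\nabla u(\cdot,0)\cdot\nabla u(\cdot,0)$ together with the harmless volume term $-\int_{Q_{t_0}}\theta_2^{2\lambda}\gamma_t\nabla u\cdot\nabla u$. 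At the same time $-2\lambda\int_{Q_{t_0}}\theta_2\,v\,v_t=-\lambda\int_\Omega\theta_2 v^2\big|_0^{t_0}+\lambda\int_{Q_{t_0}}\theta_2^{2}v^2$ yields the good volume term $\lambda\int_{Q_{t_0}}\theta_2^{2}v^2$ plus the time-boundary terms $-\lambda\theta_2(t_0)\int_\Omega v^2(\cdot,t_0)+\lambda\theta_2(0)\int_\Omega v^2(\cdot,0)$, and there remain the nonnegative $2\int_{Q_{t_0}}v_t^2$ and the source contribution $2\int_{Q_{t_0}}Gv_t$ (and, for general $A$, the term $2\int_{Q_{t_0}}Av\,v_t$). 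Thus the $2v_t$-test already produces an identity carrying every boundary term of \eqref{l3} with exactly the displayed constants, and the good zeroth-order volume term $\lambda\int_{Q_{t_0}}\theta_2^{2}v^2$, but no gradient volume term. To create the latter I would additionally test with $2Lv$, which contributes $2L\int_{Q_{t_0}}\gamma\nabla v\cdot\nabla v$ on the left, the innocuous time-boundary terms $L\int_\Omega v^2\big|_0^{t_0}$, the lower-order term $-2L\lambda\int_{Q_{t_0}}\theta_2 v^2=-2L\lambda\int_{Q_{t_0}}\theta_2^{2\lambda+1}u^2$, and $2L\int_{Q_{t_0}}Gv$ (and $2L\int_{Q_{t_0}}Av^2$). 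Adding the two identities produces all five terms of \eqref{l3}.

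It remains to absorb the error terms, and this is where both large parameters enter. The constraint $K+t_0<1/(2L)$ gives $\theta_2(t)\ge\theta_2(0)=1/(K+t_0)>2L$ on $[0,t_0]$, whence $2L\lambda\int_{Q_{t_0}}\theta_2^{2\lambda+1}u^2\le\lambda\int_{Q_{t_0}}\theta_2^{2\lambda+2}u^2$ and $L\theta_2(0)^{2\lambda}\int_\Omega u^2(\cdot,0)\le\tfrac\lambda2\theta_2(0)^{2\lambda+1}\int_\Omega u^2(\cdot,0)$, so the two lower-order terms created by the $2Lv$-test are dominated by the good zeroth-order volume and boundary terms of the $2v_t$-identity. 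Choosing $L$ large — depending only on $\rho_0$, $\|\gamma_t\|_{C(\overline{Q})}$ and the Sobolev constant below — makes $2L\int_{Q_{t_0}}\gamma\nabla v\cdot\nabla v$ absorb $\int_{Q_{t_0}}\theta_2^{2\lambda}\gamma_t\nabla u\cdot\nabla u$ (bounded by $\rho_0^{-1}\|\gamma_t\|_{C(\overline{Q})}\int_{Q_{t_0}}\gamma\nabla v\cdot\nabla v$) and the gradient part of the $A$-contribution; choosing $\lambda$ large for that fixed $L$ absorbs the remaining $L^2$-type terms via $\int_\Omega v^2=\int_\Omega\theta_2^{2\lambda}u^2\le(K+t_0)^2\int_\Omega\theta_2^{2\lambda+2}u^2$, as well as the $\varepsilon\int_{Q_{t_0}}v_t^2$ produced by Young's inequality on $2\int_{Q_{t_0}}Gv_t$; what remains on the right is exactly $C\int_{Q_{t_0}}G^2=C\int_{Q_{t_0}}\theta_2^{2\lambda}F^2$, while $2\int_{Q_{t_0}}v_t^2\ge0$ and $\theta_2(t_0)^{2\lambda}\int_\Omega\gamma(\cdot,t_0)\nabla u(\cdot,t_0)\cdot\nabla u(\cdot,t_0)\ge0$ are simply discarded. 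Matching the precise numerical constants in \eqref{l3} is then only bookkeeping, the constants on the boundary terms being read off verbatim from the $2v_t$-identity (possibly after replacing $2v_t$ by a fixed larger multiple $Mv_t$ to create a little slack).

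The only genuinely non-elementary point — and the main obstacle — is the first-order potential $A\in L^\infty(0,T;L^{2n}(\Omega))$, handled exactly as in the proof of Lemma \ref{lemma1}: for $n\ge3$, by Hölder, the Sobolev embedding $H^1(\Omega)\hookrightarrow L^{\frac{2n}{n-2}}(\Omega)$ and Young's inequality,
\[
\int_\Omega A^2v^2\,dx\le\|A\|_{L^{2n}(\Omega)}^2\,\|v\|_{L^2(\Omega)}\,\|v\|_{L^{\frac{2n}{n-2}}(\Omega)}\le\varepsilon\|\nabla v\|_{L^2(\Omega)}^2+C_\varepsilon\big(1+\|A\|_{L^\infty(0,T;L^{2n}(\Omega))}^4\big)\,\|v\|_{L^2(\Omega)}^2,
\]
with $L^\infty(\Omega)$, resp. $L^4(\Omega)$, replacing $L^{\frac{2n}{n-2}}(\Omega)$ for $n=1$, resp. $n=2$, and likewise for $\int_\Omega|A|v^2\,dx$. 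The terms $2\int_{Q_{t_0}}Av\,v_t$ and $2L\int_{Q_{t_0}}Av^2$ are then controlled by $\varepsilon'\int_{Q_{t_0}}v_t^2$, $\varepsilon''\int_{Q_{t_0}}\gamma\nabla v\cdot\nabla v$ and $C\int_{Q_{t_0}}v^2$, each of which has already been routed back into the good left-hand terms above. Collecting everything and rewriting in terms of $u$ yields \eqref{l3}.
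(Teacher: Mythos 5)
Your proposal is correct and takes essentially the same route as the paper: your $2v_t$- and $2Lv$-tests reproduce, term by term, the paper's identities \eqref{l4} and \eqref{l5} (the paper obtains the first by multiplying by $2\theta_2^{\lambda}\bigl[-\lambda\theta_2 z-\nabla\cdot(\gamma\nabla z)\bigr]$ with $z=\theta_2^{\lambda}u$, which after substituting the equation differs from testing with $2z_t$ only in which nonnegative square term is retained for the Young-inequality absorption of the source). The subsequent absorptions are likewise identical to the paper's: $\theta_2>2L$ to dominate the zeroth-order terms produced by the $2Lv$-test, large $L$ for the $\gamma_{ij,t}$ term, the H\"older--Sobolev interpolation for $A\in L^\infty(0,T;L^{2n}(\Omega))$, and large $\lambda$ for the remaining $L^2$-type errors.
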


\begin{proof}  
	The proof can be adapted from that of \cite[Theorem 2.4.1]{yu2021PhD} 
	for the Carleman  estimate of stochastic degenerate parabolic 
	equations. We sketch the necessary modifications in what follows.  
	
	First, for any $\lambda\geq 1$,  we set  $z=\theta_2^\lambda(t) u$.  Then it is straightforward to show that
	\begin{align*}
		&2\theta_2^\lambda \left[
		-\lambda\theta_2  z-\sum\limits_{i, j=1}^n (\gamma_{i  j}z_{x_i})_{x_j}\right]
		\left[
		u_t-\sum\limits_{i, j=1}^n (\gamma_{i  j}u_{x_i})_{x_j}\right]\\
		=&-(\lambda\theta_2  z^2)_t+\lambda  \theta_2^2 z^2-
		2\sum\limits_{i, j=1}^n(\gamma_{i  j}z_{x_i}z_t)_{x_j}  
		+\sum\limits_{i, j=1}^n(\gamma_{i  j}z_{x_i}z_{x_j})_{t}
		-\sum\limits_{i, j=1}^n\gamma_{i  j, t}z_{x_i}z_{x_j}\\
		&\quad
		+2\left[\lambda\theta_2  z+\sum\limits_{i, j=1}^n (\gamma_{i  j}z_{x_i})_{x_j}\right]^2.
	\end{align*}
	Integrating the above equality on $\Omega\times (0,  t_0)$,  we obtain that
	\begin{align}\label{l4}
		\begin{split}
			&\displaystyle\int_0^{t_0} \int_\Omega
			\lambda  \theta_2^2 z^2\, dxdt+
			\int_\Omega\sum\limits_{i, j=1}^n \gamma_{i j}(x, t_0)z_{x_i}(x, t_0)z_{x_j}(x, t_0)\, dx
			+\int_\Omega  \lambda\theta_2(0) z^2(x, 0)\, dx\\
			\leq  & \displaystyle\int_\Omega 
			\sum\limits_{i, j=1}^n \gamma_{i j}(x, 0)z_{x_i}(x, 0)z_{x_j}(x, 0)\, dx
			+\int_\Omega  \lambda\theta_2(t_0) z^2(x, t_0)\, dx\\
			&\quad+\displaystyle\int_0^{t_0} \int_\Omega \left|\sum\limits_{i, j=1}^n 
			\gamma_{i j, t}z_{x_i}z_{x_j}\right|\, dxdt
			+\int_0^{t_0} \int_\Omega \theta_2^{2\lambda} (Au+F)^2\, dxdt.
		\end{split}
	\end{align}
	On the other hand, it is noticed that
	$$
	\displaystyle 2\theta_2^{2\lambda} u\left[
	u_t-\sum\limits_{i, j=1}^n(\gamma_{i j}u_{x_i})_{x_j}
	\right]=(\theta_2^{2\lambda}u^2)_t
	-2\sum\limits_{i, j=1}^n(\gamma_{i j}z_{x_i}z)_{x_j}
	-2\lambda\theta_2^{2\lambda+1}u^2
	+2\sum\limits_{i, j=1}^n \gamma_{i j}u_{x_i}u_{x_j}.
	$$
	This implies that  for any $L>0$,
	\begin{align}\label{l5}
		\begin{split}
			&\displaystyle 2L\int^{t_0}_0\int_\Omega
			\sum\limits_{i, j=1}^n
			\gamma_{i j}z_{x_i}z_{x_j}\, dxdt
			+L\int_\Omega
			\theta_2^{2\lambda}(t_0)u^2(x, t_0)\, dx\\
			\leq &\displaystyle
			2L\lambda\int^{t_0}_0\int_\Omega
			\theta_2^{2\lambda+1}  u^2\, dxdt+
			L\int_\Omega \theta_2^{2\lambda}(0)u^2(x, 0)\, dx
			+2L\int^{t_0}_0\int_\Omega
			\theta_2^{2\lambda}u(Au+F)\, dxdt.
		\end{split}
	\end{align}
	By (\ref{l4}),  (\ref{l5}) and the definition of $\theta_2$,  it follows that 
	\begin{align*}
		&\int_0^{t_0}\int_\Omega
		\Big(\lambda\theta_2^{2\lambda+2}u^2+
		2L\theta_2^{2\lambda} 
		\sum\limits_{i, j=1}^{n} \gamma_{i j} u_{x_i}u_{x_j}
		\Big)\, dxdt\\
		&\quad+\int_\Omega \frac{\lambda}{(K+t_0)^{2\lambda+1}}u^2(x, 0)\, dx
		+\int_\Omega\frac{1}{K^{2\lambda}}
		\sum\limits_{i, j=1}^n \gamma_{i j}(x, t_0) u_{x_i}(x, t_0) u_{x_j}(x, t_0)\, dx\\
		\leq & L\int_\Omega \frac{1}{(K+t_0)^{2\lambda}}u^2(x, 0)\, dx
		+\int_\Omega \frac{1}{(K+t_0)^{2\lambda}}
		\sum\limits_{i, j=1}^{n} \gamma_{i j}(x, 0) u_{x_i}(x, 0)u_{x_j}(x, 0)\, dx\\
		&\quad+\int_\Omega \frac{\lambda}{K^{2\lambda+1}} u^2(x, t_0)\, dx+\int^{t_0}_0\int_\Omega\Big(
		2L\lambda\theta_2^{2\lambda+1}u^2+
		\Big|\sum\limits_{i,  j=1}^n  \gamma_{i j, t}z_{x_i}z_{x_j}\Big|\Big)\, dxdt\\
		&\quad+\int^{t_0}_0\int_\Omega
		\theta_2^{2\lambda}\Big[
		Lu^2+(L+1)(Au+F)^2\Big]\, dxdt.
	\end{align*}
	Furthermore, we notice that $\theta_2(t)\geq\frac{1}{K+t_0}>2L$. Also, for any $\epsilon>0$,  
	\begin{align*}
		&\int_0^{t_0} \int_\Omega \theta_2^{2\lambda} A^2u^2\, dxdt \\
		\leq & \int^{t_0}_0  \theta_2^{2\lambda} \norm{A}^2_{L^{2n}(\Omega)}
		\norm{u}_{L^2(\Omega)}\norm{u}_{L^{\frac{2n}{n-2}}(\Omega)}\, dt\\
		\leq &\epsilon \int_0^{t_0} \int_\Omega \theta_2^{2\lambda}|\nabla  u|^2 \, dxdt+
		C\int_0^{t_0} \int_\Omega\theta_2^{2\lambda} u^2 \, dxdt.
	\end{align*}
	Hence, 
	for sufficiently large $L$ and $\lambda$, 
	it follows that
	\begin{align*}
		&\int_0^{t_0}\int_\Omega
		\Big(\lambda\theta_2^{2\lambda+2}u^2+
		2L\theta_2^{2\lambda} 
		\sum\limits_{i, j=1}^{n} \gamma_{i j} u_{x_i}u_{x_j}
		\Big)\, dxdt+\int_\Omega \frac{\lambda}{(K+t_0)^{2\lambda+1}}u^2(x, 0)\, dx\\
		\leq &\int_\Omega \frac{1}{(K+t_0)^{2\lambda}}
		\sum\limits_{i, j=1}^{n} \gamma_{i j}(x, 0) u_{x_i}(x, 0)u_{x_j}(x, 0)\, dx\\
		&\quad+\int_\Omega \frac{\lambda}{K^{2\lambda+1}} u^2(x, t_0)\, dx
		+C\int^{t_0}_0\int_\Omega
		\theta_2^{2\lambda}F^2\, dxdt.
	\end{align*}
	This implies the desired estimate (\ref{l3}).
	
	The proof is complete. 
\end{proof}

Based  on Lemmas \ref{lemma1} and \ref{lemma2},  
one has the following conditional stability result for the inverse source problem  of (\ref{l1}). 
\begin{lem}\label{lemma3} 
	For any $M>0$,  if 
	\[
	\norm{g}_{H^1_0(\Omega)}+\norm{F}_{L^2(Q)}\leq M,  
	\]
	there exist  positive constants $C$  and $\delta_0\in (0, 1)$, 
	depending only on $n,  T$  and $\Omega$, such that 
	the following  estimate holds for any solution to $(\ref{l1})$:
	\begin{equation}\label{l6}
		\displaystyle\norm{u(\cdot, 0)}_{L^2(\Omega)}^2
		\leq 
		\frac{C(M+1)}{\delta_0}  
		\norm{(F, \p_\nu u)}-
		\frac{CM^2}{\ln[\delta_0  \norm{(F, \p_\nu u)}]},
	\end{equation}
	where  $\norm{(F, \p_\nu u)}=\left(\norm{F}_{L^2(Q)}^2+
	\norm{\partial_\nu u}^2_{L^2(\Gamma_0\times(0, T))}\right)^{1/2}.$

\end{lem}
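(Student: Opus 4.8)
The plan is to convert the two Carleman estimates of Lemmas~\ref{lemma1} and \ref{lemma2} into a logarithmic stability bound by a localisation-plus-optimisation scheme. Write $d:=\norm{(F,\p_\nu u)}$ for brevity. We may assume $T<1$ (otherwise we simply restrict to the time interval $(0,1/2)$, using only part of the lateral data, which can only decrease $d$), so that Lemma~\ref{lemma2} is available; and at the end we use the elementary bound $\norm{u(\cdot,0)}_{L^2(\Omega)}=\norm{g}_{L^2(\Omega)}\le\norm{g}_{H^1_0(\Omega)}\le M$ to cover the range in which $d$ is not small.

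First I would extract interior smallness of $u$ from Lemma~\ref{lemma1}. Since $\eta<0$ on $Q$ we have $\theta_1\le 1$, and since $e^{\lambda\eta}$ vanishes to infinite order as $t\to 0^+$ and $t\to T^-$, the weight $\theta_1^2\varphi$ is bounded on $Q$; hence for the fixed choice $\lambda=\lambda_0$, $\mu=\mu_0$ the right-hand side of \eqref{l2} is $\le C\bigl(\norm{F}_{L^2(Q)}^2+\norm{\p_\nu u}_{L^2(\Gamma_0\times(0,T))}^2\bigr)=Cd^2$. On any fixed interval $[a,b]\Subset(0,T)$ the coefficient $\theta_1^2\lambda_0^3\mu_0^4\varphi^3$ is bounded below by a positive constant, so the left-hand side of \eqref{l2} controls $\int_a^b\norm{u(\cdot,t)}_{L^2(\Omega)}^2\,dt$, giving $\int_a^b\norm{u(\cdot,t)}_{L^2(\Omega)}^2\,dt\le Cd^2$. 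By the mean value theorem there is a time $t_0\in(a,b)$ with $\norm{u(\cdot,t_0)}_{L^2(\Omega)}^2\le Cd^2$. I would fix $[a,b]$, $K>0$ and $L=L_0$ at the outset so that $K+t_0<\min\{1,(2L_0)^{-1}\}$ for every $t_0\in[a,b]$; then the constants furnished by Lemma~\ref{lemma2} can be taken uniform over $t_0\in[a,b]$.

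Next I would run Lemma~\ref{lemma2} backwards from this $t_0$. In \eqref{l3} I discard all the nonnegative left-hand terms except $\int_\Omega\frac{\lambda}{(K+t_0)^{2\lambda+1}}u^2(x,0)\,dx$, bound $\theta_2\le K^{-1}$ in the source term, and use the ellipticity of $\gamma$ together with $\norm{\nabla u(\cdot,0)}_{L^2(\Omega)}=\norm{\nabla g}_{L^2(\Omega)}\le M$ for the first-order term at $t=0$. Multiplying through by $(K+t_0)^{2\lambda+1}/\lambda$ and noting that $(K+t_0)/K$ is a fixed number $>1$, so that $\bigl((K+t_0)/K\bigr)^{2\lambda+1}$ and $(K+t_0)^{2\lambda+1}K^{-2\lambda}$ are both $\le e^{C\lambda}$, I obtain
\[
\norm{u(\cdot,0)}_{L^2(\Omega)}^2\ \le\ e^{C\lambda}\bigl(\norm{u(\cdot,t_0)}_{L^2(\Omega)}^2+\norm{F}_{L^2(Q)}^2\bigr)+\frac{C}{\lambda}\,M^2\ \le\ Ce^{C\lambda}d^2+\frac{CM^2}{\lambda}
\]
for every $\lambda\ge\lambda_0$. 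Optimising, one chooses $\lambda=\tfrac1C\ln\tfrac1d$ whenever this is $\ge\lambda_0$ (i.e.\ $d$ small), which makes the first summand $O(d)$ and the second $O(M^2/|\ln d|)$; renaming constants, inserting the normalising factor $\delta_0\in(0,1)$ into the logarithm (using $|\ln(\delta_0 d)|\le 2|\ln d|$ for $d\le\delta_0$) and disposing of the remaining values of $d$ via $\norm{u(\cdot,0)}_{L^2(\Omega)}\le M$ then yields \eqref{l6}.

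The main obstacle will be this last, quantitative step: matching the exponential-in-$\lambda$ loss coming from the ratio $(K+t_0)/K$ in Lemma~\ref{lemma2} against the $1/\lambda$ decay of the a priori $H^1_0$-term and the $d^2$-smallness supplied by Lemma~\ref{lemma1}, and then selecting $\lambda=\lambda(d)$ so the trade-off produces exactly the stated logarithmic modulus of continuity. A secondary technical point is the simultaneous scheduling of the free parameters — $L$, $K$, $t_0$, the interval $[a,b]$ and the reduction to $T<1$ — so that the geometric constraint $K+t_0<\min\{1,(2L)^{-1}\}$ of Lemma~\ref{lemma2} holds and all constants depend only on $n$, $T$ and $\Omega$.
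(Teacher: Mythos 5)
Your proposal is correct and follows essentially the same route as the paper's proof: use Lemma~\ref{lemma1} to obtain $L^2$-smallness of $u$ at an intermediate time $t_0$ in terms of $\norm{(F,\p_\nu u)}$ (the paper does this via a mean-value selection of $\hat t$ plus a forward energy estimate, you via a direct mean-value selection of $t_0$ — an immaterial difference), then apply Lemma~\ref{lemma2} backward from $t_0$ to get $\norm{u(\cdot,0)}^2\le Ce^{C\lambda}\norm{(F,\p_\nu u)}^2+CM^2/\lambda$, and finally optimize in $\lambda\sim\ln(1/\norm{(F,\p_\nu u)})$ to produce the Lipschitz-plus-logarithmic modulus. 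The paper's only cosmetic deviation is that it takes $\lambda=\tfrac{1}{\ln 9}\ln\bigl(\tfrac{d+1}{\delta_0 d}\bigr)$ (which is automatically $\ge\lambda_0$), avoiding your separate large-$d$ case, but both yield \eqref{l6}.
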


\begin{proof}  

	Without loss of generality,  we assume that $T<1$.  For any $t_1\in 
	(0, T)\cap(0, \frac{2}{3})\cap(0, \frac{1}{3L})$ with $L$ being the constant  in 
	Lemma \ref{lemma2},  
	choose $K=\frac{t_1}{2}$ and $t_0\in [\frac{t_1}{2}, t_1]$.
	Then,
	$$
	\displaystyle K+t_0\leq \frac{3}{2}t_1<\min\left\{1, \frac{1}{2L}\right\}
	$$
	and
	$$
	\displaystyle \LC \frac{t_1+2t_0}{2}\RC^{-2\lambda}
	=(K+t_0)^{-2\lambda} \leq \theta_2^{2\lambda}(t)
	\leq \Big(\frac{2}{t_1}\Big)^{2\lambda}, \quad\mbox{for any }\lambda\geq \lambda_0
	\mbox{  and }t\in  [0, t_0].
	$$
	
	By Lemma \ref{lemma2}, 
	\begin{align*}
		\begin{split}
			&\displaystyle\lambda\int_\Omega
			\LC \frac{t_1+2t_0}{2}\RC ^{-2\lambda-1}u^2(x, 0) \, dx\\
			\leq & \displaystyle C\int_\Omega 
			\LC \frac{t_1+2t_0}{2}\RC^{-2\lambda}|\nabla  u(x, 0)|^2\, \, dx+C\lambda\Big(\frac{2}{t_1}\Big)^{2\lambda+1}
			\left[\int_\Omega  u^2(x, t_0)dx+\int_Q F^2(x, t)\, dxdt\right].
		\end{split}
	\end{align*}
	This implies that
	\begin{align}\label{l7}
		\begin{split}
			&\displaystyle\int_\Omega  u^2(x, 0)\, dx\\
			\leq  &\displaystyle\frac{C}{\lambda}\int_\Omega  |\nabla u(x, 0)|^2\, dx\\
			&\quad+C\LC \frac{t_1+2t_0}{2}\RC^{2\lambda}
			\Big(\frac{2}{t_1}\Big)^{2\lambda+1}
			\left[\int_\Omega  u^2(x, t_0)\, dx+\int_Q F^2(x, t)\, dxdt\right]\\
			\leq &\displaystyle\frac{C}{\lambda}\int_\Omega  |\nabla u(x, 0)|^2\, dx
			+C9^\lambda \norm{(F,  t_0)}^2,
		\end{split}
	\end{align}
	where $\norm{(F,  t_0)}^2:=\displaystyle\int_\Omega  u^2(x, t_0)\, dx+\int_Q F^2(x, t)\, dxdt$.

	On the other  hand,  by  Lemma \ref{lemma1}, for fixed parameters $\lambda$ and $\mu$, it holds that
	$$
	\displaystyle\int^{t_0}_{\frac{t_0}{2}}
	\int_\Omega \LC  u^2+|\nabla u|^2\RC \, dxdt
	\leq C\int_Q F^2dxdt+C\int^T_0\int_{\Gamma_0} \left|\partial_\nu u\right|^2\, dSdt.
	$$ 
	Hence, there exists a $\hat  t\in (\frac{t_0}{2}, t_0)$,  such that
	$$
	\displaystyle
	\int_\Omega \LC  u^2(x, \hat t)+|\nabla u(x, \hat t)|^2 \RC \, dx
	\leq C\int_Q F^2dxdt+C\int^T_0\int_{\Gamma_0} \left|\partial_\nu u\right|^2\, dSdt.
	$$
	By the standard energy estimate,
	\begin{align}\label{l8}
		\begin{split}
			&\displaystyle\int_\Omega u^2(x, t_0) \, dx \\
			\leq  &	C\int_\Omega u^2(x,  \hat t)\, dx+C\int^{t_0}_{\hat  t}\int_\Omega(u^2+ F^2)\, dxdt\\
			\displaystyle\leq & C \int_Q F^2\, dxdt+C\int^T_0\int_{\Gamma_0} \left|\partial_\nu u\right|^2\, dSdt.
		\end{split}
	\end{align}

	By (\ref{l7}) and (\ref{l8}), it holds that
	\begin{equation}\label{l9}
		\int_\Omega u^2(x, 0)\, dx\leq \frac{C}{\lambda}\int_\Omega  |\nabla u(x, 0)|^2\, dx+C9^\lambda
		\norm{(F, u_\nu)}^2.
	\end{equation}
	Take  
	$$
	\delta_0\in (0, e^{-\lambda_0\ln 9})\quad\mbox{ and }
	\quad \lambda=\frac{1}{\ln 9}\ln \LC \frac{\norm{(F, u_\nu)}+1}{\delta_0  \norm{(F, u_\nu)}}\RC,
	$$
	where $\lambda_0$ is  the  constant in Lemma \ref{lemma2}. Then, $\lambda\geq \lambda_0$.
	Set 
	$$\hat{u}=\displaystyle\frac{\delta_0}{\norm{(F, u_\nu)}+1}u\quad
	\mbox{  and  }
	\quad\hat  F=\displaystyle\frac{\delta_0}{\norm{(F, u_\nu)}+1}F.
	$$
	Hence, by  (\ref{l9}), 
	\begin{align*}
		&\int_\Omega \hat u^2(x, 0)\, dx\\
		\leq &	\frac{C}{\lambda}\int_\Omega |\nabla  \hat u(x,  0)|^2\, dx+
		C9^\lambda \frac{\delta_0^2}{(\norm{(F, u_\nu)}+1)^2} \norm{(F, u_\nu)}^2\\
		\leq & \frac{C}{\ln  \LC \frac{\norm{(F, u_\nu)}+1}{\delta_0 \norm{(F, u_\nu)}}\RC}
		\int_\Omega |\nabla \hat u(x, 0)|^2\, dx
		+C\frac{\delta_0 \norm{(F, u_\nu)}}{\norm{(F, u_\nu)}+1}.
	\end{align*}
	It follows that
	\begin{align*}
		\int_\Omega u^2(x, 0)\, dx\leq 
		\frac{C}{\ln  \LC \frac{\norm{(F, u_\nu)}+1}{\delta_0 \norm{(F, u_\nu)}}\RC}
		\int_\Omega |\nabla  u(x, 0)|^2\, dx
		+C\frac{(\norm{(F, u_\nu)}+1)\norm{(F, u_\nu)}}{\delta_0}.
	\end{align*}
	For any $M>0$,  
	$$
	\norm{(F, u_\nu)}\leq  CM.
	$$
	Hence, 
	\begin{align*}
		\int_\Omega u^2(x, 0)\,  dx\leq 
		\frac{CM^2}{\ln \LC \frac{\norm{(F, u_\nu)}+1}{\delta_0 \norm{(F, u_\nu)}}\RC}
		+C\frac{(M+1)\norm{(F, u_\nu)}}{\delta_0}.
	\end{align*}
	This implies the desired estimate (\ref{l6}).
\end{proof}

Now,   we come back to the proof of Theorem \ref{Main Thm 1}.

\begin{proof}[Proof of Theorem $\ref{Main Thm 1}$]
	For any $a\in \mathcal{A}_T$ and two initial values $g_1, g_2\in H^1_0(\Omega)$,  
	let $\wt u=u_1 -u_2$, 
	where $u_j$   $(j=1, 2)$ are the solutions to \eqref{IBVP for thm 1 for j=1,2}
	associated to $g_j$. Then $\wt u\in H^{2, 1}(Q)$ is the solution to the 
	following parabolic equation: 
	\begin{align}\label{equ proof of thm 1 for j=1,2}
		\begin{cases}
			\wt u_{ t}-\nabla\cdot(\gamma\nabla \wt u)+A(x,t)\wt u=0  &\text{ in }\ Q,\\
			\wt u=0  &\text{ on }\ \Sigma,\\
			\wt u(x, 0)=g_1-g_2, 
			&\text{ in }\ \Omega,
		\end{cases}
	\end{align}
	with
	\begin{align*}
		A(x,t)\wt u= a(x,t,u_1)-a(x,t,u_2) = \LC \int_0^1 a_u (x,t,su_1 + (1-s)u_2)\, ds \RC\cdot \wt u.
	\end{align*}
	with $A(x,t)= \displaystyle\int_0^1 a_u (x,t,su_1 + (1-s)u_2)\, ds$. Similar to \cite[Theorem 3.2]{LLL2021determining}, 
	we can prove that $A\in L^\infty(0, T; L^{2n}(\Omega))$. By Lemma \ref{lemma3}, 
	for any $M>0$,  if 
	$\norm{g_1-g_2}_{H^1_0(\Omega)}\leq M, $
	there exist  positive constants $C$  and $\delta_0\in (0, 1)$, 
	depending only on $n,  T$  and $\Omega$, such that 
	$$
	\displaystyle\norm{\wt u(\cdot, 0)}_{L^2(\Omega)}^2
	\leq 
	\frac{C(M+1)}{\delta_0}  
	\norm{\p_\nu  \wt u }_{L^2(\Gamma_0\times(0, T))}-
	\frac{CM^2}{\ln\LC \delta_0  \norm{\p_\nu \wt u}
		_{L^2(\Gamma_0\times(0, T))}\RC}.
	$$
	This proves the desired estimate (\ref{Stability estimate in Thm 1}).
\end{proof}

Furthermore,  there is a counterexample showing that if  $a$ is unknown,  the passive measurement cannot uniquely determine all unknowns. 
\begin{thm}[Non-uniqueness]\label{thm:2} 
	Suppose that $\gamma=\LC \gamma^{ij}(x)\RC_{i,j=1}^n \in 
C^{2, 1}(\overline{Q}; \R^{n\times n})$ is  symmetric uniformly positive definite, $a_j\in\mathcal A_T$  and
$g_j\in H^1_0(\Omega)$ for $j=1, 2$.  Denote by
 $\Lambda_{a_j,  g_j}^0$  the DN map of the following semilinear parabolic equation:  
	\begin{align}\label{llll}
	\begin{cases}
	u_{j,  t}-\nabla\cdot(\gamma\nabla u_j)+a_j(x,t,u_j)=0  &\text{ in }\ Q,\\
	u_j=0   &\text{ on }\ \Sigma,\\
	u_j(x,0)=g_j(x),    &\text{ in }\ \Omega.
		\end{cases}
	\end{align}
Then there exist two groups of unknown sources $(g_1, a_1),  (g_2,  a_2)
\in H^1_0(\Omega)\times\mathcal A_T$,  such that 
\[
(g_1, a_1) \neq (g_2, a_2),
\]
but
\[
\Lambda^0_{g_1, a_1}=\Lambda^0_{g_2,a_2}.
\]
\end{thm}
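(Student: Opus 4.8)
The plan is to produce an explicit non-uniqueness example by prescribing the solution first and then reverse-engineering a nonlinearity that makes it a solution. The key reduction is that $\Lambda^0_{a,g}$ is nothing but the single function $\p_\nu u|_{\Gamma_0\times(0,T)}$, where $u\in H^{2,1}(Q)$ is the \emph{unique} strong solution of \eqref{llll} with lateral data $f\equiv0$ and initial data $g$; uniqueness here is the global well-posedness result for \eqref{e1} from Section~\ref{Sec 2}. Thus it suffices to exhibit two pairs $(g_1,a_1),(g_2,a_2)\in H^1_0(\Omega)\times\mathcal A_T$ with $(g_1,a_1)\ne(g_2,a_2)$ whose associated solutions have the same Neumann trace on $\Gamma_0\times(0,T)$. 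The essential observation is that the nonlinearity $a$ enters \eqref{llll} only through its restriction to the graph $\{(x,t,u(x,t)):(x,t)\in Q\}$ of the solution, a set of zero Lebesgue measure in $Q\times\R$; off this graph the nonlinearity is completely unconstrained, so two nonlinearities agreeing on it but differing elsewhere give the same solution, hence the same measurement.

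Concretely, I would fix $\chi_1,\chi_2\in C_c^\infty(\Omega)$ with $\chi_1\ne\chi_2$ (say, with disjoint supports), set $g_j:=\chi_j$, and take the time-independent function $u_j:=\chi_j$ as the intended solution. Since each $\chi_j$ vanishes in a neighborhood of $\Gamma$, one automatically gets $u_j|_\Sigma=0$ and $\p_\nu u_j|_\Sigma=0$. Writing $h_j:=\nabla\cdot(\gamma\nabla\chi_j)\in C(\overline\Omega)$ and fixing $\beta\in C_c^\infty(\R)$ with $\beta(0)=1$ and $\supp\beta\subset(-1,1)$, I would define
\[
a_j(x,t,y):=h_j(x)\,\beta\!\left(y-\chi_j(x)\right),\qquad j=1,2.
\]
Then $a_j(x,t,\chi_j(x))=h_j(x)$, so $u_j=\chi_j$ solves $\p_t u_j-\nabla\cdot(\gamma\nabla u_j)+a_j(x,t,u_j)=0$ in $Q$ together with the zero lateral data and the initial data $g_j$; being a strong $H^{2,1}(Q)$ solution, it is \emph{the} solution furnished by the well-posedness result, provided $a_j\in\mathcal A_T$. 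Consequently $\Lambda^0_{a_1,g_1}=\p_\nu\chi_1|_{\Gamma_0\times(0,T)}=0=\p_\nu\chi_2|_{\Gamma_0\times(0,T)}=\Lambda^0_{a_2,g_2}$, while $(g_1,a_1)\ne(g_2,a_2)$ already because $g_1=\chi_1\ne\chi_2=g_2$.

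It then remains only to check that $a_j\in\mathcal A_T$, which is where the sole (mild) technical point sits: the map $y\mapsto a_j(\cdot,\cdot,y)$ is $C^\infty$; the function $a_j(\cdot,\cdot,0)=h_j(\cdot)\,\beta(-\chi_j(\cdot))$ belongs to $L^\infty(Q)\subset L^2(Q)$ since $h_j\in C(\overline\Omega)$; and $\p_y a_j(x,t,y)=h_j(x)\beta'(y-\chi_j(x))$ vanishes identically once $|y|\ge\|\chi_j\|_{L^\infty(\Omega)}+1$, so the growth condition \eqref{condition of nonlinear f at infinity data} holds trivially (the $\limsup$ is even a $\lim$ equal to $0$). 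I do not expect a real obstacle in this argument; the heart of the matter is just the rigidity failure noted above — the passive measurement sees the solution, not the nonlinearity off its graph — and the cutoff $\beta$ is present only to guarantee membership in $\mathcal A_T$. Finally, note that the same example shows in addition that even the initial datum $g$ by itself cannot be recovered from $\Lambda^0_{a,g}$ once $a$ is unknown.
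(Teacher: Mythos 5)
Your proposal is correct and follows essentially the same route as the paper: prescribe solutions that vanish near the boundary (so the passive measurement is identically zero) and reverse-engineer the nonlinearity so that it takes the required values on the graph of the prescribed solution, then verify membership in $\mathcal A_T$ and invoke uniqueness from the global well-posedness result. The only cosmetic difference is that the paper simply takes $a_j(x,t,y):=A_j(x,t)$ independent of $y$, whereas you insert the cutoff $\beta(y-\chi_j(x))$ to produce genuinely $y$-dependent nonlinearities; both satisfy the growth condition trivially and yield the same conclusion.
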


\begin{proof} Assume that two   functions $u_1, u_2\in C^\infty(\overline{Q})$ satisfy   that  
\begin{eqnarray*}
&&u_1(\cdot, 0)\neq u_2(\cdot, 0) \mbox{ in a measurable set of }\Omega\mbox{  with positive measure}, \\[2mm]
&&\mbox{and}\  u_1(x, t)=u_2(x, t)=0 \mbox{  in  } \Omega_\epsilon\times[0, T],
\end{eqnarray*}
where $\Omega_\epsilon=\Big\{ x\in \Omega\  \Big|\      \dist(x,  \Gamma)<\epsilon  \Big\}$.
Set 
$$
A_j(x, t)=-u_{j, t}(x,t)+\nabla\cdot(\gamma\nabla u_j(x, t)), \quad \text{ for }j=1,2 \mbox{ and }(x, t)\in Q.
$$
It is easy to show  that  $u_j$   $(j=1, 2)$  are solutions to (\ref{llll}) associated  to 
\begin{align*}
&g_j(x)=u_j(x,  0)\quad\mbox{  and  } \quad a_j(x, t, u_j)=A_j(x,  t).
\end{align*}
Then,  
\[
(g_1, a_1)\neq (g_2, a_2), 
\]   
but $$\partial_\nu u_{1}\Big|_{\Gamma_0\times(0, T)}=\Lambda^0_{g_1, a_1}=
\Lambda^0_{g_2,a_2}=\partial_\nu u_{2}\Big|_{\Gamma_0\times(0, T)}=0.$$

\end{proof}

\subsection{Determination by active measurements}

This subsection is devoted to proving Theorem \ref{Main Thm 2} under the condition that 
$a\in \mathcal{B}_{T}$  (see (\ref{set C})).

\begin{proof}[Proof of Theorem $\ref{Main Thm 2}$]
	For any  $a_j\in \mathcal{B}_{T}$ $(j=1, 2)$,  
	 $$a_j(x, t, y)=a_0(x, t, y)\chi_{[0, T-\epsilon]}(t)+c_j(x,t,y)\chi_{[T-\epsilon, T]}(t), 
	$$
	where $\epsilon>0$,    $a_0\in \mathcal{A}_T$  and 
	$c_1, c_2\in \mathcal{A}_T$   with $c_1(x, t, 0)=c_2(x, t, 0)=0$ in $Q$.

	By the assumptions in Theorem $\ref{Main Thm 2}$, 
	for any  
	$g_j\in H_0^1(\Omega)$ $(j=1,  2)$,  it holds  that 
	\begin{equation}\label{eq:p1}
		\Lambda_{g_1, a_1}(f)=\Lambda_{g_2, a_2}(f),\quad \text{ for any }  f\in 
		\mathcal  E
		\mbox{ with }\supp  f\subset \Gamma_0\times[0, T]. 
	\end{equation}
	Let $u_1$ and $u_2$ be, respectively, the solutions to (\ref{IBVP for thm 2 for j=1,2}) associated to  the above 
	mentioned $(g_1,a_1, f)$ and $(g_2,a_2,f)$. 
	Set 
	\[
	\wt u=u_1-u_2.
	\]
	It is straightforward to see that
	\begin{align}\label{eq:wave2}
		\begin{cases}
			\wt u_{t}-\nabla\cdot(\gamma\nabla \wt u)+a_1(x,t,u_1(x, t))- a_2(x, t,  u_2(x, t))=0  &\text{ in }\ Q,\\[2mm]
			\wt u=0   &\text{ on }\ \Sigma,\\
			\partial_\nu \wt u=0 &\text{  on  }\ \Gamma_0\times(0, T),\\
			\wt u(x, 0)=\wt g(x)  &\text{ in }\ \Omega,
		\end{cases}
	\end{align}
	where
	$\wt g=g_1-g_2$.

	Notice that
	\begin{align*}
		&a_1(x,t,u_1(x, t))-a_2(x, t,  u_2(x, t))\\[2mm]
		=&\Big[a_1(x,t,u_1(x, t))-a_1(x, t, u_2(x, t))\Big]
		+\Big[a_1(x,t,u_2(x, t))-a_2(x, t,  u_2(x, t))\Big] \\
		=&\LC \displaystyle\int^1_0 a_{1,  u}(x, t,  su_1(x, t)+(1-s)u_2(x,  t))\, ds\RC\cdot \wt u(x, t) \\[1mm]
		&\quad+ a_1(x,t, u_2(x, t))- a_2(x, t, u_2(x, t)).
	\end{align*}
	By $a_1\in \mathcal{A}_{T}$,   
	$\displaystyle\int^1_0 f_{1,  u}(x, t,  su_1(x, t)+(1-s)u_2(x,  t))\, ds\in L^\infty(0, T; L^{2n}(\Omega))$.  
	By  Lemma \ref{lemma3},  one has that 
	\begin{align}\label{l13}
		\begin{split}
			\displaystyle\norm{g_1-g_2}^2_{L^2(\Omega)}
			\displaystyle\leq &C(u_1, u_2)\norm{a_1(\cdot, \cdot, u_2(\cdot, \cdot))
				-a_2(\cdot, \cdot, u_2(\cdot, \cdot))}_{L^2(Q)}\\
			&\quad\displaystyle-\frac{C(u_1, u_2)}{\ln[\delta_0
				\norm{a_1(\cdot, \cdot, u_2(\cdot, \cdot))-a_2(\cdot, \cdot, u_2(\cdot, \cdot))}_{L^2(Q)}]},
		\end{split}
	\end{align}
	where $C(u_1, u_2)$ denotes a constant depending on $u_1$ and $u_2$.

	Recall  the controllability result for the following semilinear parabolic equation (see \cite{dzz}):
	\begin{align}\label{l14}
		\begin{cases}
			u_{2, t}-\nabla\cdot(\gamma\nabla u_2)+a_2(x,t,u_2)=0  &\text{ in }\ \Omega\times(0, T-\epsilon),\\
			u_2=f  &\text{ on }\ \Gamma\times(0, T-\epsilon),\\
			u_2(x, 0)=g_2(x) ,  &\text{ in }\ \Omega.
		\end{cases}
	\end{align} 
	For any $g_2\in L^2(\Omega)$, 
	there  always exists  an $f^*\in L^2(\Gamma\times(0, T-\epsilon))$ with $\supp f^*\subset 
	\Gamma_0\times(0, T-\epsilon]$,  
	such that  the corresponding solution   $u_2$ to (\ref{l14})    satisfies that
	$u_2(x, T-\epsilon)=0$ in $\Omega$.
	Choose 
	$$f(x, t)=
	\begin{cases}
		\displaystyle f^*(x, t) &\mbox{ on }\ \Gamma_0\times[0, T-\epsilon),\\
		\displaystyle  0 &\mbox{ on }\ \Gamma_0\times[T-\epsilon, T].
	\end{cases}$$
	By the fact  that $c_1(\cdot, \cdot, 0)=c_2(\cdot, \cdot, 0)\equiv 0$ in $Q$,  the solution 
	$u_2=u_2(\cdot, \cdot; f)$ to (\ref{l14}) associated to  the above
	$f$  satisfies  that 
	$$
	u_2(x, t)\equiv 0\quad\mbox{ in  }\ \Omega\times [T-\epsilon, T].
	$$
	Combining the above result with (\ref{l13}),  one readily obtains 
	\[
	g_1=g_2 \quad \mbox{ in }\ \Omega,
	\]
	which proves the assertion in Theorem  $\ref{Main Thm 2}$.
\end{proof}

\section{Simultaneous recovery results for inverse problems}\label{Sec 4}

In this section, we present the proofs of Theorems~\ref{Main Thm:Simultaneous} and \ref{Main Thm:Simultaneous linear} on the simultaneous recovery results for the inverse problems. We first derive the unique determination of the coefficient for the linear parabolic equation.
To that end, let us prove some useful properties, which will be needed in the proofs of Theorems \ref{Main Thm:Simultaneous} and \ref{Main Thm:Simultaneous linear}.

\subsection{Approximation and denseness properties}

Let us begin with the Runge approximation properties for linear parabolic equations. The following approximation property will be used in the proof of Theorems \ref{Main Thm:Simultaneous} and \ref{Main Thm:Simultaneous linear} with full data.

\begin{lem}[Runge approximation with full data]\label{appro}
   Let $q\in C^{2+\alpha,1+\alpha/2}(\overline{Q})$. Then for any solutions $v_{\pm}\in L^2(0,T;H^1(\Omega))\cap H^1(0,T;H^{-1}(\Omega))$	to 
\begin{equation}
	\begin{cases}\label{to App1}
		\p_tv_+-\Delta v_+ +qv_+=0  &\text{ in }\ Q,\\
	    v_+(x,0)=0               &\text{ in }\ \Omega,
	\end{cases}
\end{equation} and
\begin{equation}\label{to App2}
	\begin{cases}
		-\p_tv_--\Delta v_- +qv_-=0  &\text{ in }\ Q,\\

		v_-(x,T)=0               &\text{ in }\ \Omega,
	\end{cases}
\end{equation}
and  any $\eta>0$, there exist solutions $V_{\pm}\in C^{2+\alpha,1+\alpha/2}(\overline{Q})$ to 
\begin{equation}\label{app1}
	\begin{cases}
		 \p_tV_+-\Delta V_++ qV_+=0   &\text{ in }\ Q,\\
	
		V_+(x,0)=0            &\text{ in }\ \Omega,
	\end{cases}
\end{equation} and
\begin{equation}\label{app2}
	\begin{cases}
		-\p_tV_--\Delta V_-+ qV_-=0   &\text{ in }\ Q,\\

		V_-(x,T)=0            &\text{ in }\ \Omega,
	\end{cases}
\end{equation}
	such that
	$$\norm{V_{\pm}-v_{\pm}}_{L^2(Q)}<\eta.$$
\end{lem}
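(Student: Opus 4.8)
The plan is to establish both density statements by a Hahn–Banach duality argument built on the adjoint parabolic problem; since the $v_-$--$V_-$ case reduces to the $v_+$--$V_+$ case by the change of variables $t\mapsto T-t$ (which turns the backward equation into a forward one with potential $q(x,T-t)\in C^{2+\alpha,1+\alpha/2}(\overline{Q})$), I describe only the latter. Let $\mathcal R_+\subset L^2(Q)$ denote the linear space of all $V_+\in C^{2+\alpha,1+\alpha/2}(\overline{Q})$ solving \eqref{app1}; by Lemma \ref{wellpose1} these are exactly the solutions produced by admissible lateral data $h\in C^{2+\alpha,1+\alpha/2}(\overline{\Sigma})$ with $h(\cdot,0)=h_t(\cdot,0)=0$ on $\Gamma$. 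Since $\mathcal R_+$ is contained in the subspace of all energy solutions of \eqref{to App1}, it suffices to show that every $v_+$ solving \eqref{to App1} lies in the $L^2(Q)$-closure $\overline{\mathcal R_+}$. If this fails, Hahn–Banach yields $\phi\in L^2(Q)$ with $\phi\perp\mathcal R_+$ in $L^2(Q)$ but $\int_Q\phi\,v_+\neq0$ for some such $v_+$.

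Let $w$ solve the adjoint problem
\begin{equation*}
\begin{cases}
-\partial_t w-\Delta w+qw=\phi &\text{in }Q,\\
w=0 &\text{on }\Sigma,\\
w(\cdot,T)=0 &\text{in }\Omega;
\end{cases}
\end{equation*}
being a backward heat equation with an $L^2(Q)$ source, it has a unique solution in the energy class $L^2(0,T;H^1_0(\Omega))\cap H^1(0,T;H^{-1}(\Omega))$. Pairing this equation against an arbitrary $V_+\in\mathcal R_+$ with lateral trace $h$ and integrating by parts in $x$ and $t$: the bulk term $\int_Q w\big(\partial_t V_+-\Delta V_++qV_+\big)$ vanishes, the temporal boundary contributions vanish because $w(\cdot,T)=0$ and $V_+(\cdot,0)=0$, and the spatial boundary contributions reduce, using $w|_\Sigma=0$, to $-\int_\Sigma\p_\nu w\,h$. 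Since $\int_Q\phi\,V_+=0$, this gives $\int_\Sigma\p_\nu w\,h=0$ for every admissible $h$; as these $h$ are dense in $L^2(\Sigma)$, we obtain $\p_\nu w=0$ on all of $\Sigma$.

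Repeating the same integration by parts with $v_+$ in place of $V_+$ now closes the argument: the bulk term vanishes since $v_+$ solves the homogeneous equation, the temporal terms vanish since $w(\cdot,T)=0$ and $v_+(\cdot,0)=0$, the lateral term $-\int_\Sigma\p_\nu w\,(v_+|_\Sigma)$ vanishes since $\p_\nu w=0$ on $\Sigma$, and the remaining lateral term $\int_\Sigma w\,\p_\nu v_+$ vanishes since $w|_\Sigma=0$. Hence $\int_Q\phi\,v_+=0$, contradicting the choice of $\phi$; therefore $\overline{\mathcal R_+}$ contains all energy solutions of \eqref{to App1}, which is the assertion, and the estimate $\norm{V_\pm-v_\pm}_{L^2(Q)}<\eta$ follows.

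The main technical obstacle is justifying these two integration-by-parts identities at the regularity actually available: neither $w$ (its $L^2(Q)$ source does not yield $H^2$-in-$x$ regularity) nor the generic solution $v_+$ is classical, so $\p_\nu w$ and the lateral trace of $v_+$ must be interpreted in the appropriate dual trace spaces on $\Sigma$, and the temporal boundary terms via the standard Lions–Magenes formula for functions in the energy class. I would organize this either through the transposition (very weak solution) framework for parabolic equations, or --- more cheaply --- by a preliminary reduction in which $v_+$ is first approximated in $L^2(Q)$ by solutions with $C^\infty$ lateral data (using continuous dependence of the transposition solution on its $L^2(\Sigma)$ boundary data), after which the second identity is classical while the first only ever pairs $\p_\nu w$ against the smooth datum $h$. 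I also emphasize the structural point that, \emph{in contrast to the partial-data case}, no quantitative unique continuation or Carleman estimate enters here: since the admissible lateral data exhaust a dense subset of $L^2(\Sigma)$, the duality computation forces $\p_\nu w\equiv0$ on the entire lateral boundary directly, which together with $w|_\Sigma=0$ is all that is needed.
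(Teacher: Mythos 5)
Your argument is correct and is essentially the paper's own proof: the same Hahn--Banach duality, the same auxiliary backward problem with source $\phi$ (the paper's $\overline{V}$), the same deduction that $\p_\nu w=0$ on all of $\Sigma$ from the density of admissible lateral traces, and the same final pairing against $v_+$. Your additional remarks on interpreting $\p_\nu w$ and the temporal boundary terms at low regularity are a sensible elaboration of points the paper passes over silently, but they do not change the route.
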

\begin{proof}
	We only prove the case for the forward parabolic equation, and the backward one can be proved similarly. Define
	$$X=\Big\{V\in C^{2+\alpha,1+\alpha/2}(\overline{Q})\,\Big|\, 
	V \text{ is a solution to } \eqref{app1} \Big\}$$and
	$$Y=\Big\{v\in L^2(0,T;H^1(\Omega))\cap H^1(0,T;H^{-1}(\Omega)) \,\Big|\, v \text{ is a solution to } \eqref{to App1} \Big\}.$$
	We aim to show that $X$ is dense in $Y$. By the Hahn-Banach theorem, it suffices to prove the following statement: If $f\in L^2(Q)$ satisfies
	$$\int_{Q}fV\,dxdt=0, \quad \text{ for any } V\in X,$$
	then
	$$\int_{Q}fv\,dxdt=0, \quad \text{ for any }v\in Y.$$
	To this end, 
	let $f\in L^2(Q)$ and suppose $\int_{Q}fV\,dxdt=0$,  for any  $V\in X$. Consider 
	\begin{equation}
		\begin{cases}
			-\p_t\overline{V}-\Delta \overline{V} +q\overline{V}=f  &\text{ in }\ Q,\\
			\overline{V}=0 &\text{ on }\ \Sigma,\\
			\overline{V}(x,T)=0            &\text{ in }\ \Omega
		\end{cases}
	\end{equation}
and its  solution is in $H^{2, 1}(Q)$. For any $V\in X$, one has 
\begin{align*}
	0=&\int_{Q}fV\,dxdt=\int_Q (-\p_t\overline{V}-\Delta \overline{V} +q\overline{V}) V\,dxdt=\int_{\Sigma} \p_\nu \overline{V}V\, dSdt.
\end{align*}
Since $V|_{\Sigma}$ can be arbitrary function, which  is compactly supported on $\Sigma$, we must have $\p_\nu \overline{V}=0$ on $\Sigma$.
Thus, for any $ v\in Y$,
\begin{align*}
	\int_{Q}fv \,dxdt&=\int_Q (-\p_t\overline{V}-\Delta \overline{V} +q\overline{V}) v\,dxdt=\int_{\Sigma}\p_\nu\overline{V}v\,dSdt=  0,
\end{align*}
which verifies the assertion.
\end{proof}
Let $\Omega\subset \R^n$ be a connected domain, and $\Omega'$ be a connected open subset of $\Omega$ such that $\p\Omega\subset\p\Omega'$. Define $Q'=(\Omega\backslash\Omega')\times(0,T)$. Meanwhile, for given $\varepsilon>0$ and $\omega\in\mathbb{S}^{n-1}$, we set 
\begin{align*}
	&\Gamma_{+,\omega,\varepsilon}:=\Big\{x\in\Gamma\  \Big| \  \nu(x)\cdot\omega>\varepsilon\Big\}, \\
	&\Gamma_{-,\omega,\varepsilon}:=\Big\{x\in\Gamma \   \Big| \   -\nu(x)\cdot\omega>\varepsilon\Big\},\\
	&\mbox{and }\Sigma_{\pm,\omega,\varepsilon}:=\Gamma_{\pm,\omega, \varepsilon}\times(0,T).
\end{align*}
The following approximation property will be used to prove Theorems \ref{Main Thm:Simultaneous} and \ref{Main Thm:Simultaneous linear} with partial data.

\begin{lem}[Runge approximation with partial data]\label{appro2}
	Let $q\in C^{2+\alpha,1+\alpha/2}(\overline{Q})$. Then for any solutions $W_{\pm}\in L^2(0,T;H^1(\Omega))\cap H^1(0,T;H^{-1}(\Omega))$	to 
	\begin{equation}\label{to app3}
		\begin{cases}
				\p_tW_+-\Delta W_+ +qW_+=0  &\text{ in }\ Q,\\
				W_+(x,0)=0              &\text{ in }\ \Omega
		\end{cases}		
	\end{equation}and
	\begin{equation}\label{to App4}
			\begin{cases}
			\p_tW_--\Delta W_- +qW_-=0  &\text{ in }\ Q,\\
			W_-(x,T)=0              &\text{ in }\ \Omega,
		\end{cases}	
	\end{equation}
	and any $\eta>0$, there exist solutions $v_{\pm}\in C^{2+\alpha,1+\alpha/2}(\overline{Q})$ to 
	\begin{equation}\label{app3}
		\begin{cases}
			\p_tv_+-\Delta v_++ qv_+=0   &\text{ in }\ Q,\\
			v_+=0                    &\text{ on }\ \Gamma_{-,\omega,\varepsilon}\times(0,T),\\
			v_+(x,0)=0            &\text{ in }\ \Omega,
		\end{cases}
	\end{equation}and
	\begin{equation}\label{app4}
		\begin{cases}
			-\p_tv_--\Delta v_-+ qv_-=0   &\text{ in }\ Q,\\
			v_-=0                    &\text{ on }\ \Gamma_{+,\omega,\varepsilon}\times(0,T),\\
			v_-(x,T)=0            &\text{ in }\ \Omega,
		\end{cases}
	\end{equation}
	such that
	$$\|W_{\pm}-v_{\pm}\|_{L^2(Q')}<\eta.$$
\end{lem}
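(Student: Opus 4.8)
The strategy mirrors that of Lemma~\ref{appro}: duality via the Hahn--Banach theorem, the only new ingredient being a unique continuation argument needed to accommodate the fact that $v_\pm$ are forced to vanish on part of the lateral boundary. I will only treat the forward pair $(v_+,W_+)$; the pair $(v_-,W_-)$ is handled identically after reversing time, with $\Gamma_{+,\omega,\varepsilon}$ playing the role of $\Gamma_{-,\omega,\varepsilon}$. Set
\[
X=\big\{\,v_+|_{Q'}\ :\ v_+\in C^{2+\alpha,1+\alpha/2}(\overline Q)\ \text{solves}\ \eqref{app3}\,\big\},\qquad
Y=\big\{\,W_+|_{Q'}\ :\ W_+\ \text{solves}\ \eqref{to app3}\,\big\},
\]
both viewed as subspaces of $L^2(Q')$; one has $X\subseteq Y$, and the goal is to show $X$ is dense in $Y$. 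By Hahn--Banach it suffices to prove that every $f\in L^2(Q')$ with $\int_{Q'}fv_+\,dxdt=0$ for all $v_+\in X$ also satisfies $\int_{Q'}fW_+\,dxdt=0$ for all $W_+\in Y$. Extend $f$ by zero to $\widetilde f\in L^2(Q)$, so that $\supp\widetilde f\subseteq(\Omega\setminus\Omega')\times(0,T)$, and let $\phi\in H^{2,1}(Q)$ solve the adjoint problem $-\p_t\phi-\Delta\phi+q\phi=\widetilde f$ in $Q$, $\phi=0$ on $\Sigma$, $\phi(\cdot,T)=0$ in $\Omega$, whose well-posedness and regularity follow from Lemma~\ref{wellpose1} (or the standard $L^2$-theory).

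Integrating by parts against $v_+\in X$ and using $v_+(\cdot,0)=0$, $\phi(\cdot,T)=0$, $\phi|_\Sigma=0$ and the two equations gives $0=\int_{Q'}fv_+\,dxdt=\int_Q\widetilde f\,v_+\,dxdt=-\int_\Sigma v_+\,\p_\nu\phi\,dSdt$. Since $v_+$ need only vanish on $\Gamma_{-,\omega,\varepsilon}\times(0,T)$, its lateral trace may be prescribed to be an arbitrary element of $C_c^\infty((\Gamma\setminus\overline{\Gamma_{-,\omega,\varepsilon}})\times(0,T))$, extended by zero to $\Sigma$: the corresponding forward problem with zero initial datum is solvable in $C^{2+\alpha,1+\alpha/2}(\overline Q)$ by Lemma~\ref{wellpose1}, the compatibility conditions being satisfied because this data vanishes near $t=0$. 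As these traces are dense in $L^2((\Gamma\setminus\overline{\Gamma_{-,\omega,\varepsilon}})\times(0,T))$, we deduce $\p_\nu\phi=0$ on $(\Gamma\setminus\overline{\Gamma_{-,\omega,\varepsilon}})\times(0,T)$; together with $\phi|_\Sigma=0$, the Cauchy data of $\phi$ vanish on this relatively open portion of $\Sigma$, which contains $\Sigma_{+,\omega,\varepsilon}$.

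The key step is now to upgrade this to $\phi\equiv0$ in $\Omega'\times(0,T)$. Because $\supp\widetilde f\subseteq(\Omega\setminus\Omega')\times(0,T)$ and $\Gamma\subset\p\Omega'$, $\phi$ solves the homogeneous equation $-\p_t\phi-\Delta\phi+q\phi=0$ in $\Omega'\times(0,T)$, and in particular in a one-sided neighbourhood of $(\Gamma\setminus\overline{\Gamma_{-,\omega,\varepsilon}})\times(0,T)$ inside $Q$. Extending $\phi$ by zero across that boundary portion --- legitimate since both components of its Cauchy data vanish there --- and invoking the (lateral/space-like) unique continuation property for second-order parabolic equations, $\phi$ vanishes in a neighbourhood of $(\Gamma\setminus\overline{\Gamma_{-,\omega,\varepsilon}})\times(0,T)$ inside $Q$; since $\Omega'$ is connected, a further application of unique continuation yields $\phi\equiv0$ in $\Omega'\times(0,T)$. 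Hence the whole Cauchy data of $\phi$ on $\Sigma$ vanish. Repeating the integration by parts with an arbitrary $W_+\in Y$ in place of $v_+$, and using $\phi|_\Sigma=0$, $\p_\nu\phi|_\Sigma=0$, $W_+(\cdot,0)=0$ and $\phi(\cdot,T)=0$, we obtain $\int_{Q'}fW_+\,dxdt=-\int_\Sigma W_+\,\p_\nu\phi\,dSdt=0$, which completes the argument.

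I expect the delicate point to be this unique continuation step: one must carefully justify the zero-extension of $\phi\in H^{2,1}(Q)$ across the boundary portion on which its Cauchy data vanish, and then apply the lateral unique continuation principle for parabolic equations (valid here since the coefficient $q$ is smooth) to propagate the vanishing through the connected cylinder $\Omega'\times(0,T)$ --- it is precisely here that the geometric hypothesis $\Gamma\subset\p\Omega'$ enters. By contrast, the auxiliary well-posedness and compatibility verifications, the density of smooth compactly supported boundary traces, and the integration-by-parts identities are all routine.
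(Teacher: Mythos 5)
Your proposal is correct and follows essentially the same route as the paper's proof: Hahn--Banach duality, the adjoint backward problem with source $f$ extended by zero, deducing $\p_\nu\phi=0$ on the accessible part of $\Sigma$ from the freedom in prescribing $v_+|_\Sigma$, and then zero-extension plus the spacelike unique continuation principle (the paper cites \cite{saut1987unique}, introducing an auxiliary set $\Omega_1$ attached to $\Gamma\setminus\Gamma_{-,\omega,\varepsilon}$ for the extension) to conclude $\phi\equiv 0$ in $\Omega'\times(0,T)$ and hence the vanishing of the remaining boundary term. The only cosmetic difference is that the paper performs the final integration by parts over $Q'$ using the vanishing Cauchy data on $\p(\Omega\setminus\Omega')\times(0,T)$, whereas you integrate over all of $Q$; both are valid.
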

\begin{proof}
		We may only prove the case for forward parabolic equations. Define
	
	$$X'=\Big\{v\in C^{2+\alpha,1+\alpha/2}(\overline{Q}) \,\Big|\, v \text{ is a solution to } 
	\eqref{app3} \Big\}$$and
	$$Y'=\Big\{W\in L^2(0,T;H^1(\Omega))\cap H^1(0,T;H^{-1}(\Omega))\,\Big|\, V 
	\text{ is a solution to } \eqref{to app3} \Big\}.$$
	We aim to show that $X'$ is dense in $Z$. By the Hahn-Banach theorem again, it suffices to claim that if $f\in L^2(Q')$ satisfies
	$$\int_{Q'}fv\,dxdt=0,  \text{ for any } v\in X',$$
	then
	$$\int_{Q'}fW\,dxdt=0,  \text{ for any } W\in Y'.$$
	Let $f\in L^2(Q')$ satisfy  that $\int_{Q'}fv\,dxdt=0, \  \forall v\in X'$.  We extend $f$ to $Q$ by letting $f=0$ outside $Q'$.

	Consider 
	\begin{equation}
		\begin{cases}
			-\p_t\overline{v}-\Delta \overline{v} +q\overline{v}=f  &\text{ in }\ Q,\\
			\overline{v}=0                    &\text{ on }\ \Sigma,\\
			\overline{v}(x,T)=0            &\text{ in }\ \Omega,
		\end{cases}
	\end{equation}
	and  its solution is in $H^{2, 1}(Q).$  Then for any $v\in X'$,
	\begin{align*}
		&0=\int_{Q}fv\,dxdt=\int_Q (-\p_t\overline{v}-\Delta \overline{v} +q\overline{v}) v\,dxdt=\int_{\Sigma} \p_\nu \overline{v}v\,dSdt.
	\end{align*}
	Since $v|_{\Sigma}$ can be arbitrary function, which is compactly supported on $\Sigma\backslash(\Gamma_{-,\omega,\varepsilon}\times(0,T))$ and $v=0$ on $\Gamma_{-,\omega,\varepsilon}\times(0,T) $, we have that $\p_\nu \overline{v}=0$ on $\Sigma\backslash(\Gamma_{-,\omega,\varepsilon}\times(0,T))$.

	Next, fix a set $\Omega_1$  with
	 nonempty interior,  such that $(\Omega_1 \cap \p\Omega) \subset (\Gamma\backslash\Gamma_{-,\omega,\varepsilon})$. Then $\overline{v}=0$ on $\Omega_1\times(0,T)$. Notice that
	$$ -\p_t\overline{v}-\Delta \overline{v} +q\overline{v}=0  \text{ in } (\Omega'\cup\Omega_1)\times(0,T).$$
	Since $\Omega'\cup\Omega_1$ is open and connected, by the unique continuation principle for linear parabolic equations (for instance, see \cite{saut1987unique}), we have $\overline{v}=0$ on $\Omega'\times(0,T).$
	Hence, $\overline{v}\Big|_{\p\Omega'\times(0,T)}=\p_\nu \overline{v}\Big|_{\p\Omega'\times(0,T)}=0,$ and it follows that $$\overline{v}\Big|_{\p(\Omega\backslash\Omega')\times(0,T)}=\p_\nu \overline{v}\Big|_{\p(\Omega\backslash\Omega')\times(0,T)}=0.$$ 
	Hence, for any $W\in Y'$,
	\begin{align*}
		\int_{Q'}fW \,dxdt=\int_{Q'} (-\p_t\overline{v}-\Delta \overline{v} +q\overline{v}) W\,dxdt=\int_{\p(\Omega\backslash\Omega')\times(0,T)}\p_\nu\overline{v}W\,dSdt=  0.
	\end{align*}
	This completes the proof.
\end{proof}

\begin{rmk}
	Let us refer readers to some related approximation property for some different diffusion equations, such as \cite[Lemma 5.3]{CK2018determination}. Since the proofs of the global uniqueness results with either full data or partial data are similar, we focus on presenting the arguments for the full data case and remark the necessary modifications for the partial data case, and vice versa.
\end{rmk}

\begin{lem}[Denseness property]\label{Lem: denseness}
	Let $q_1,q_2 \in L^\infty(Q)$.
	Assume that $f\in L^\infty(Q)$,  such that 
	$$\int_{Q} fv_1v_2\, dxdt =0,$$ 
	for any $v_1$ and $v_2$, which satisfy $v_1v_2\in L^1(Q)$, and are,  respectively,  solutions to 
	\begin{align}\label{equ forward CGO}
		\begin{cases}
			\p_{ t} v_1-\Delta   v_1+q_1  v_1=0   &\text{ in }\  Q,\\
			v_1(x,0)=0     &\ \text{ in }\Omega,
		\end{cases}                     
	\end{align}
	and
	\begin{align}\label{equ backward CGO}
		\begin{cases}
			\p_{ t} v_2+\Delta v_2-q_2 v_2=0   &\text{ in }\ Q,\\
			v_2(x,T)=0      &\text{ in }\ \Omega.
		\end{cases}                       
	\end{align}
	Then $f=0$.
	In other words, the linear span of products of solutions to forward and backward 
	parabolic equations  are dense in $L^1(Q)$.
\end{lem}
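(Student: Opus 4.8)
The plan is to use Complex Geometrical Optics (CGO) solutions for the forward and backward heat operators, together with the density of products argument that is classical in Calderón-type problems. Pick complex frequencies $\zeta_1,\zeta_2\in\mathbb{C}^n$ with $\zeta_j\cdot\zeta_j=0$ and $\zeta_1+\zeta_2=i\xi$ for a fixed real vector $\xi\in\mathbb{R}^n$; then $e^{i x\cdot\xi}=e^{i x\cdot(\zeta_1+\zeta_2)/i}$ can be factored as a product of two exponentials carrying large imaginary parts in opposite directions. Following the CGO construction recalled in the appendix (cf.\ \cite{CY2018logarithmic}), for each large parameter $\tau$ one builds a solution $v_1=e^{x\cdot\zeta_1^\tau}(1+\psi_1^\tau)$ of $\p_t v_1-\Delta v_1+q_1 v_1=0$ in $Q$ and a solution $v_2=e^{x\cdot\zeta_2^\tau}(1+\psi_2^\tau)$ of $\p_t v_2+\Delta v_2-q_2 v_2=0$ in $Q$, with correction terms $\psi_j^\tau\to 0$ in a suitable norm (e.g.\ $L^2(Q)$) as $\tau\to\infty$. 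The time-dependence is accommodated by including a factor that oscillates in $t$ and is absorbed into the zeroth-order term; the key point is that these CGO families are genuine solutions on all of $Q$ with the prescribed initial/terminal data, so that $v_1(x,0)=0$ and $v_2(x,T)=0$ can be arranged (or handled by a cutoff/Runge-approximation step, invoking Lemma~\ref{appro}).

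Given such CGO families, one computes
\[
\int_Q f\, v_1 v_2\, dx\,dt=\int_Q f(x,t)\, e^{i x\cdot\xi}\,\big(1+\psi_1^\tau+\psi_2^\tau+\psi_1^\tau\psi_2^\tau\big)\, dx\,dt.
\]
Since $f\in L^\infty(Q)\subset L^1(Q)$ and the correction terms tend to $0$, letting $\tau\to\infty$ in the hypothesis $\int_Q f v_1 v_2\,dx\,dt=0$ yields
\[
\int_Q f(x,t)\, e^{i x\cdot\xi}\, dx\,dt=0\qquad\text{for every }\xi\in\mathbb{R}^n.
\]
Here one must be slightly careful: the exponential $e^{i x\cdot\xi}$ does not depend on $t$, so the above says that the partial Fourier transform in $x$ of $\int_0^T f(x,t)\,dt$ vanishes, giving only $\int_0^T f(x,t)\,dt=0$. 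To recover the full conclusion $f\equiv 0$ one introduces an extra oscillation in time into the CGO Ansatz — multiply $v_1$ by $e^{i\sigma t}$ and compensate in $q_1$, or more cleanly pick the CGO phases so that the product contains a factor $e^{i x\cdot\xi+i\sigma t}$ for arbitrary $(\xi,\sigma)\in\mathbb{R}^n\times\mathbb{R}$; parabolic CGO solutions carrying such a space-time frequency are exactly what the appendix provides. Then the same limiting argument gives $\widehat{f}(\xi,\sigma)=0$ for all $(\xi,\sigma)$, hence $f=0$ in $Q$.

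The main obstacle, and the step deserving the most care, is the construction (or citation) of the parabolic CGO solutions on the bounded cylinder $Q$ with the prescribed initial/terminal conditions and with decaying remainders in a norm strong enough that products $\psi_1^\tau\psi_2^\tau$ are still controlled against $f\in L^\infty$ (so $L^2(Q)$ decay for each $\psi_j^\tau$ suffices by Cauchy--Schwarz). A secondary technical point is matching the initial data: the raw CGO solution does not vanish at $t=0$ (resp.\ $t=T$), so one either absorbs this via a cutoff in time combined with the Runge approximation Lemma~\ref{appro}, replacing the exact CGO solutions by genuine solutions of the correct initial-value problems that are $L^2(Q')$-close to them, or one works from the outset with the CGO construction tailored to the homogeneous initial condition. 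Once these CGO families are in hand, the density conclusion — that finite linear combinations of products $v_1 v_2$ are dense in $L^1(Q)$, equivalently that $f\in L^\infty(Q)$ annihilating all such products must vanish — follows from the Fourier-inversion argument sketched above.
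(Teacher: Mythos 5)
Your proposal follows essentially the same route as the paper: products of forward/backward parabolic CGO solutions with vanishing initial/terminal data (Proposition \ref{CGO_F}) reduce, after sending the large parameter to infinity, to the space--time Fourier transform of $f$, and your observation that the oscillation must be carried in both $x$ and $t$ is exactly what the paper's amplitudes $\theta_{\pm,\rho}$ already provide, so no extra modification of the ansatz is needed. Two small corrections: the parabolic CGO weights here are the real exponentials $e^{\pm(\rho\,\omega\cdot x+\rho^2 t)}$ (whose product cancels) rather than complex vectors with $\zeta\cdot\zeta=0$, and for a fixed direction $\omega$ the attainable frequencies $(\xi,\tau)$ are constrained to the hyperplane $\xi\cdot\omega=0$, so to conclude $\widehat f\equiv 0$ on $\mathbb{R}^{n+1}$ one must additionally vary $\omega$ over $\mathbb{S}^{n-1}$ --- the paper does this by noting $\widehat f$ is analytic (compact support) and perturbing $\omega$ in a small cone --- a step your write-up omits.
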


\begin{proof}
	Since $q_j\in L^\infty(Q)$ for $j=1,2$, without loss of generality, we may 
	assume that  there exists a positive number $m$,  such that $q_1,  q_2\in \Big\{ q\in L^{\infty}(Q)\ \Big|\ 
	\norm{q}_{L^{\infty}(Q)}<m \Big\}$.
	First, let us  fix $\omega\in\mathbb{S}^{n-1}$. Consider $\rho>0$ to be sufficiently  large, and $(\xi,\tau)\in M:
	=\Big\{(\xi, \tau)\in\mathbb{R}^{n+1} \big| \, \xi\cdot\omega=0 \Big\}$ with $|(\xi,\tau)|^2<\rho-1$.
	Then by Proposition \ref{CGO_F}, there is a solution $v_{1,\rho}(\cdot, \cdot; \xi, \tau)$ to \eqref{equ forward CGO} such that $$v_{1,\rho}=\psi_{-,  \rho}(\theta_{+, \rho}+z_{+, \rho, q_1})$$ with $\norm{z_{+,  \rho, q_1}}_{L^2(Q)}\to 0$ as $\rho\to\infty$.
	Similarly, there is a solution $v_{2,\rho}(\cdot, \cdot)$ to the backward parabolic equation \eqref{equ backward CGO} such that $$v_{2,\rho}=\psi_{+, \rho}(\theta_{-,\rho}+z_{-, \rho, q_2})$$ with $\norm{z_{-, \rho, q_2}}_{L^2(Q)}$ 
	tending to $0$,  as $\rho\to\infty$.
	Then
	\begin{align*}
		v_{1,\rho}v_{2,\rho}&=\theta_{+, \rho}\theta_{-, \rho}+\theta_{+, \rho} z_{-, \rho, q_2}
		+z_{+, \rho, q_1}\theta_{-, \rho}+z_{+, \rho, q_1}z_{-, \rho, q_2}\\
		&=\varphi_{\rho}(t)e^{-\mbox{i}(x,t)\cdot(\xi,\tau)}+\theta_{+,\rho}z_{-,\rho, q_2}+z_{+, \rho, q_1}\theta_{-,\rho}+z_{+, \rho, q_1}z_{-, \rho, q_2},
	\end{align*}
	where $\varphi_{\rho}(t)=1-\exp(-\rho^{3/4}t)-\exp(-\rho^{3/4}(T-t))+\exp(-\rho^{3/4}T)$.
	Note that $\theta_{+,\rho}$ and $\theta_{-,\rho}$ are bounded with respect to $\rho>0$. 
	Hence, letting $\rho\to+\infty$ in  $\displaystyle\int_Q fv_{1, \rho}v_{2, \rho}\, dxdt=0$, we have that
	\begin{equation}\label{integral id}
		\int_Qf e^{-\mbox{i}(x,t)\cdot(\xi,\tau)}\, dxdt=0.
	\end{equation}
	Therefore, for a fixed $\omega\in\mathbb{S}^{n-1}$, \eqref{integral id} holds in any compact subset of $M$. Clearly, $M$ is an $n$-dimensional subspace of $\mathbb{R}^{n+1}$. Notice that $f$ has compact support as a distribution and its Fourier transform is analytic. The Fourier transform of $f$ is zero in any compact subset of $M$ as shown, and  therefore by changing $\omega\in\mathbb{S}^{n-1}$ in a small conic neighborhood, 
	we can conclude it is zero in $\mathbb{R}^{n+1}$.  This implies $f=0$ in $Q$ as desired.
\end{proof}

In the application of the preceding denseness result with full data, we are able to derive the following global uniqueness result as follows.

\begin{cor}[Global uniqueness with full data]\label{fulldata}
	Let $q_1,q_2 \in L^\infty(Q)$.
	Let $\Lambda_{q_j}$ be the full DN map of the linear heat equation:
	\begin{align}\label{equ forward full data}
		\begin{cases}
			\p_{ t} v_j-\Delta   v_j+q_j v_j=0   &\text{ in }\  Q,\\
			v_j(x,0)=0     &\ \text{ in }\Omega,
		\end{cases}                     
	\end{align}
	for $j=1,2$, respectively. Assume that 
    \begin{align}\label{Same DN full}
    	 \Lambda_{q_1}(f)=\Lambda_{q_2}(f) \text{ on }\Sigma,
    \end{align}
	for any $f\in H^{-1/2,-1/4}(\Sigma)$, then $q_1=q_2$ in $Q$.
	
\end{cor}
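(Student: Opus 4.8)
The plan is to reduce the corollary to the denseness statement of Lemma~\ref{Lem: denseness} via the standard integral identity produced by the equality of the two DN maps. First I would fix a smooth lateral datum $f$ (say $f\in C^{2+\alpha,1+\alpha/2}(\overline\Sigma)$ with $f(\cdot,0)=0$ on $\Gamma$) and let $u_j$ be the unique solution of \eqref{equ forward full data} with $q=q_j$, boundary value $f$ and zero initial value; since $q_j\in L^\infty(Q)\subset L^{n+2}(Q)$, standard linear parabolic theory (cf. Section~\ref{Sec 2}) gives $u_j\in H^{2,1}(Q)$, which is the regularity needed for the integrations by parts below. Put $w=u_1-u_2$. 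Subtracting the two equations and regrouping the potential term yields
\begin{equation*}
\partial_t w-\Delta w+q_2 w=(q_2-q_1)u_1 \ \text{ in }Q,\qquad w=0\ \text{ on }\Sigma,\qquad w(x,0)=0\ \text{ in }\Omega,
\end{equation*}
and since the hypothesis \eqref{Same DN full} forces $\partial_\nu u_1=\partial_\nu u_2$ on $\Sigma$, we also have $\partial_\nu w=0$ on $\Sigma$.

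Next I would test this against backward solutions. Let $v_2$ be any solution of the adjoint equation $\partial_t v_2+\Delta v_2-q_2 v_2=0$ in $Q$ with $v_2(\cdot,T)=0$, as in \eqref{equ backward CGO}. Multiplying the $w$-equation by $v_2$, integrating over $Q$ and integrating by parts in $t$ and in $x$ (Green's identity), every boundary contribution drops out: the $t=T$ term because $v_2(\cdot,T)=0$, the $t=0$ term because $w(\cdot,0)=0$, and the two lateral terms on $\Sigma$ because $w=0$ and $\partial_\nu w=0$ there. What remains is $\int_Q w\,(-\partial_t v_2-\Delta v_2+q_2 v_2)\,dxdt=0$, and therefore
\begin{equation*}
\int_Q (q_1-q_2)\,u_1\,v_2\,dxdt=0.
\end{equation*}
Writing $v_1:=u_1$, this is precisely the orthogonality hypothesis of Lemma~\ref{Lem: denseness} with $f=q_1-q_2$: indeed $v_1$ solves the forward problem \eqref{equ forward CGO} with potential $q_1$ and zero initial data, while $v_2$ solves the backward problem \eqref{equ backward CGO} with potential $q_2$. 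A brief density remark covers the passage to the CGO solutions actually used in the proof of that lemma (they are smooth, hence arise as IBVP solutions with their own smooth boundary traces as data and zero initial data, so the identity above applies to them), and one checks that the products $v_1 v_2$ lie in $L^1(Q)$ as required.

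Since $q_1-q_2\in L^\infty(Q)$, Lemma~\ref{Lem: denseness} then yields $q_1-q_2=0$ in $Q$, which is the assertion. The argument is entirely routine once Lemma~\ref{Lem: denseness} is available; the only minor care points are the regularity bookkeeping for the integration by parts (handled by working first with smooth $f$ and then invoking density, noting that Lemma~\ref{Lem: denseness} only needs the identity on a dense family of products) and matching the function spaces so that the pairing $\int_Q (q_1-q_2) v_1 v_2$ makes sense. The substantive ingredient — completeness of products of forward and backward parabolic solutions, obtained there from the parabolic CGO solutions — has been isolated in Lemma~\ref{Lem: denseness}, so no additional obstacle arises at this stage.
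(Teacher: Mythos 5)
Your proposal is correct and follows essentially the same route as the paper's own proof: subtract the two forward problems to get an equation for $w=u_1-u_2$ with source $(q_2-q_1)u_1$ and vanishing Cauchy data on $\Sigma$ (the latter from the DN map equality), pair with an arbitrary backward adjoint solution to obtain the orthogonality identity $\int_Q(q_1-q_2)u_1v_2\,dxdt=0$, and invoke Lemma~\ref{Lem: denseness}. Your added remarks on regularity bookkeeping and on passing to the CGO solutions are sensible refinements of the same argument rather than a different approach.
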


\begin{proof}
	This result can be regarded as an application of \cite{CY2018logarithmic}, and we offer the proof for the sake of completeness. Let $\hat{v}$ be a solution to the backward heat equation:
	\begin{align}\label{equ backward full data}
		\begin{cases}
			\p_{ t} \hat v+\Delta  \hat v-q_2 \hat v=0   &\text{ in }\ Q,\\
			\hat v(x,T)=0      &\text{ in }\ \Omega.
		\end{cases}                       
	\end{align}
Subtracting \eqref{equ forward full data} with $j=1,2$, then we have 
\begin{align}\label{equ forward full data difference}
	\begin{cases}
		\p_{ t} \wt v-\Delta   \wt v +q_2 \wt v= (q_2-q_1)v_1   &\text{ in }\  Q,\\
		\wt v(x,0)=0     &\ \text{ in }\Omega,
	\end{cases}                     
\end{align}
where $\wt v=v_1-v_2$ in $Q$. Multiplying \eqref{equ forward full data difference} by the solution $\hat{v}$ of \eqref{equ backward full data}, with the condition \eqref{Same DN full} at hand, it is easy to derive that 
\begin{align}
	\int_Q (q_2-q_1)v_1 \hat{v}\, dxdt=0.
\end{align}
Therefore, by applying \eqref{Lem: denseness}, one can conclude that $q_1=q_2$ in $Q$ as desired.

\end{proof}

\begin{lem}[Global uniqueness with partial data]\label{Lem:CGO}
	Let $\Omega\subset \R^n$ be a bounded domain with $C^\infty$-smooth boundary $\Gamma$.	
	For  any $q_j\in C^{2+\alpha, 1+\frac{\alpha}{2}}(\overline{Q})$   $(j=1, 2)$,   
	assume that $\Lambda^{\mathrm{P}}_{q_j}$ are the partial DN maps of the linear parabolic equation:   
	
	\begin{align}\label{linear term parabolic}
		\begin{cases}
			(\p_t-\Delta  +q_j) u=0  &\text{ in }\ Q,\\
			u= f   &\text{ on }\ \Sigma,\\
			u(x,  0)=0,   &\text{ in }\ \Omega,
		\end{cases}
	\end{align}
	and
	$$\Lambda^{\mathrm{P}}_{  q_1}(f)=\Lambda^{\mathrm{P}}_{ q_2}(f)  \text{ in } \mathcal{V}_- , $$
	for any $f \in C_0^{2+\alpha,1+\alpha/2}(\mathcal{V}_+)$. 
	If $q_1=q_2$ in $\Omega'\times(0,T)$, where $\Omega'$ is an arbitrarily given
	 connected open subset of $\Omega$ with $\Gamma\subset\p\Omega'$,  then
	$$
	q_1=q_2 \text{ in }Q.
	$$
\end{lem}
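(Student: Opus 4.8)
The plan is to treat this as the partial-data counterpart of Corollary~\ref{fulldata}, following the same three-step scheme: (i) turn the equality of the partial DN maps into an orthogonality relation of the form $\int_{Q'}(q_1-q_2)V_+V_-\,dxdt=0$ for a restricted family of solutions to the forward operator $\p_t-\Delta+q_1$ and the backward operator $-\p_t-\Delta+q_2$; (ii) enlarge this to \emph{all} solutions of these operators on $Q$ via the partial-data Runge approximation of Lemma~\ref{appro2}; (iii) invoke the CGO-based denseness of Lemma~\ref{Lem: denseness} to deduce $q_1=q_2$ on $Q'=(\Omega\setminus\Omega')\times(0,T)$. Since $q_1=q_2$ on $\Omega'\times(0,T)$ by hypothesis and $q_1,q_2$ are continuous, this forces $q_1=q_2$ in $Q$.

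For (i), fix $\omega_0\in\mathbb{S}^{n-1}$. As $\mathcal U_\pm$ are open neighbourhoods of the compact sets $\Gamma_{\pm,\omega_0}$, a compactness argument provides $\varepsilon>0$ with $\{x\in\Gamma:\nu(x)\cdot\omega_0\ge-\varepsilon\}\subset\mathcal U_+$ and $\{x\in\Gamma:\nu(x)\cdot\omega_0\le\varepsilon\}\subset\mathcal U_-$, i.e.\ $\Sigma\setminus\mathcal V_+\subset\Gamma_{-,\omega_0,\varepsilon}\times(0,T)$ and $\Sigma\setminus\mathcal V_-\subset\Gamma_{+,\omega_0,\varepsilon}\times(0,T)$. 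Let $V_+\in C^{2+\alpha,1+\alpha/2}(\overline Q)$ solve $\p_tV_+-\Delta V_++q_1V_+=0$ in $Q$, $V_+=0$ on $\Gamma_{-,\omega_0,\varepsilon}\times(0,T)$, $V_+(\cdot,0)=0$, and $V_-\in C^{2+\alpha,1+\alpha/2}(\overline Q)$ solve $-\p_tV_--\Delta V_-+q_2V_-=0$ in $Q$, $V_-=0$ on $\Gamma_{+,\omega_0,\varepsilon}\times(0,T)$, $V_-(\cdot,T)=0$. Then $V_+|_\Sigma$ is supported in a compact subset of $\mathcal U_+$, and $V_+(\cdot,0)\equiv0$ forces $\p_tV_+(\cdot,0)=\Delta V_+(\cdot,0)-q_1V_+(\cdot,0)\equiv0$, so $V_+|_\Sigma$ is an admissible input for $\Lambda^{\mathrm P}_{q_1}$ and, by Lemma~\ref{wellpose1}, the solution $\widetilde V_+\in C^{2+\alpha,1+\alpha/2}(\overline Q)$ of $\p_t\widetilde V_+-\Delta\widetilde V_++q_2\widetilde V_+=0$, $\widetilde V_+=V_+$ on $\Sigma$, $\widetilde V_+(\cdot,0)=0$, is well defined; the hypothesis $\Lambda^{\mathrm P}_{q_1}=\Lambda^{\mathrm P}_{q_2}$ gives $\p_\nu V_+=\p_\nu\widetilde V_+$ on $\mathcal V_-$. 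Put $w:=V_+-\widetilde V_+$, so that $w=0$ on $\Sigma$, $w(\cdot,0)=0$, $\p_\nu w=0$ on $\mathcal V_-$, and $\p_tw-\Delta w+q_2w=(q_2-q_1)V_+$ in $Q$. Multiplying by $V_-$, integrating over $Q$, and integrating by parts (the time-boundary terms vanish since $w(\cdot,0)=0$ and $V_-(\cdot,T)=0$; $\int_\Sigma w\,\p_\nu V_-\,dSdt$ vanishes since $w=0$ on $\Sigma$; the volume term vanishes since $-\p_tV_--\Delta V_-+q_2V_-=0$) yields
\[
\int_Q(q_2-q_1)V_+V_-\,dxdt=-\int_\Sigma\p_\nu w\,V_-\,dSdt=0,
\]
using $\p_\nu w=0$ on $\mathcal V_-$ and $V_-=0$ on $\Sigma\setminus\mathcal V_-$. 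Because $q_1=q_2$ off $Q'$, this reads $\int_{Q'}(q_2-q_1)V_+V_-\,dxdt=0$.

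For (ii)--(iii), let $W_+\in L^2(0,T;H^1(\Omega))\cap H^1(0,T;H^{-1}(\Omega))$ be any solution of $\p_tW_+-\Delta W_++q_1W_+=0$ in $Q$ with $W_+(\cdot,0)=0$, and $W_-$ any solution of $-\p_tW_--\Delta W_-+q_2W_-=0$ in $Q$ with $W_-(\cdot,T)=0$. Lemma~\ref{appro2}, applied with $\omega=\omega_0$ and the above $\varepsilon$ — once to the forward operator with potential $q_1$, once to the backward operator with potential $q_2$ — produces solutions $V_\pm^{(k)}$ of exactly the kind used in step (i) with $V_\pm^{(k)}\to W_\pm$ in $L^2(Q')$; since $q_2-q_1\in L^\infty(Q')$ and $V_+^{(k)}V_-^{(k)}\to W_+W_-$ in $L^1(Q')$ by Cauchy--Schwarz (using boundedness of $\{V_\pm^{(k)}\}$ in $L^2(Q')$), passing to the limit in the identity of step (i) gives $\int_{Q'}(q_2-q_1)W_+W_-\,dxdt=0$ for all such $W_\pm$. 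Extend $f:=(q_2-q_1)\chi_{Q'}$ by zero to $Q$; then $f\in L^\infty(Q)$, $W_+W_-\in L^1(Q)$, and the last identity says $\int_Q f\,v_1v_2\,dxdt=0$ for every solution $v_1$ of $\p_tv_1-\Delta v_1+q_1v_1=0$, $v_1(\cdot,0)=0$, and every $v_2$ of $\p_tv_2+\Delta v_2-q_2v_2=0$, $v_2(\cdot,T)=0$ (equivalently $-\p_tv_2-\Delta v_2+q_2v_2=0$). Lemma~\ref{Lem: denseness} then forces $f=0$, i.e.\ $q_1=q_2$ a.e.\ on $Q'$, hence $q_1=q_2$ in $Q$.

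The heart of the matter is already carried by Lemmas~\ref{appro2} and \ref{Lem: denseness} (the former itself resting on unique continuation for parabolic equations, the latter on the parabolic CGO solutions), so the genuine difficulty here is in the bookkeeping of steps (i)--(ii): one must pick $\varepsilon$ small enough \emph{relative to the prescribed neighbourhoods $\mathcal U_\pm$} so that the Runge solutions from Lemma~\ref{appro2} vanish on precisely the sets $\Gamma_{\mp,\omega_0,\varepsilon}\times(0,T)$ that make their lateral traces legitimate inputs for $\Lambda^{\mathrm P}_{q_1}$, $\Lambda^{\mathrm P}_{q_2}$, and one must consistently route $q_1$ through the forward operators and $q_2$ through the backward ones so that the terminal integral identity matches the hypotheses of Lemma~\ref{Lem: denseness} verbatim; the $t=0$ compatibility needed for the traces is automatic because $V_+(\cdot,0)\equiv0$ makes $\p_tV_+(\cdot,0)\equiv0$.
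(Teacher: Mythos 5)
Your proof is correct and follows essentially the same route as the paper's: the integral identity extracted from the equality of the partial DN maps (with the lateral boundary terms killed by choosing $\varepsilon$ so that $\Sigma\setminus\mathcal{V}_\pm$ sits inside $\Gamma_{\mp,\omega_0,\varepsilon}\times(0,T)$), the partial-data Runge approximation of Lemma~\ref{appro2}, and the parabolic CGO density argument. The only difference is organizational: the paper fixes a specific pair of CGO solutions, approximates them via Lemma~\ref{appro2}, and redoes the $\rho\to\infty$ limit inline, whereas you first upgrade the orthogonality to all energy-space solutions and then invoke Lemma~\ref{Lem: denseness} as a black box --- a slightly cleaner packaging of the same ingredients.
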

\begin{proof}
	
	By Proposition \ref{CGO_F}, there is a solution 
	$$v_1(\cdot, \cdot; \rho,  \xi, \tau,\omega)=\psi_{-,\rho}(\theta_{+,\rho}+z_{+, \rho, q_1})\in L^2(0,T;H^1(\Omega))\cap H^1(0,T;H^{-1}(\Omega))$$ 
	to the forward parabolic equation  (\ref{linear term parabolic}) with respect to $q_1$ such that 
	$$
	\lim\limits_{\rho\to\infty}\norm{z_{+,  \rho, q_1}}_{L^2(Q)}=0.
	$$ 
	For $j\in\{1,2\}$, let us define 
	$$
	S_j=\Big\{v\in L^2(0,T;H^1(\Omega))\cap H^1(0,T;H^{-1}(\Omega))\,\Big|\, (\p_t-\Delta  +q_j) 
	v=0  \text{ in } Q, v(x,0)=0 \text{ in }\Omega \Big\},
	$$
    and the map $\mathcal{M}:S_1\to S_2$ is defined by 
	$$\mathcal{M}(v_1)=v_2,$$
	where $v_2$ is the solution to
		\begin{align}
		\begin{cases}
			(\p_t-\Delta  +q_2) v_2=0  &\text{ in }\ Q,\\
			v_2= v_1   &\text{ on }\ \Sigma,\\
			v_2(x,  0)=0,   &\text{ in }\ \Omega.
		\end{cases}
	\end{align}
	By using the trace theorem, $v_1 \big|_{\Sigma}\in L^2(0,T;H^{1/2}(\Gamma)) $ and the map $\mathcal M$ is well-defined.
	Now we have
	\begin{align}\label{111}
		\begin{cases}
			(\p_t-\Delta  +q_2) (v_1-v_2)=(q_2-q_1)v_1  &\text{ in }\ Q,\\
			v_1-v_2= 0   &\text{ on }\ \Sigma,\\
			(v_1-v_2)(x,0)=0 &\text { in }\ \Omega.
		\end{cases}
	\end{align}

	Consider a solution $\hat{v}$ to the backward parabolic equation \eqref{cgo2} of the form that we have constructed in Proposition \ref{CGO_F} with $q=q_2$. 
	Then by Lemma $\ref{appro2}$, there are two sequences of functions 
	$\left\{v^k_1 \right\}^{\infty}_{k=1}$, $\left\{\hat{v}^k\right\}^{\infty}_{k=1}\in 
	C^{2+\alpha, 1+\frac{\alpha}{2}}(\overline{Q})$, such that 
	  $v^k_1$ are solutions to \eqref{app3}, $\hat{v}^k$ are solutions to \eqref{app4}, and  $v^k_1\to v_1$, $\hat{v}^k\to\hat{v}$ in $L^2(Q')$ as $k\to \infty$.  Hence, we have 
	  	\begin{align}
	  	\begin{cases}
	  		(\p_t-\Delta  +q_2) (v^k_1-\mathcal{M}(v_1^k))=(q_2-q_1)v^k_1  &\text{ in }\ Q,\\
	  		v^k_1-\mathcal{M}(v_1^k)= 0   &\text{ on }\ \Sigma,\\
	  		(v^k_1-\mathcal{M}(v_1^k))(x,0)=0 &\text { in }\ \Omega.
	  	\end{cases}
	  \end{align}
	Let $v^k_2=\mathcal{M}(v_1^k)$. Multiplying by the functions $\hat{v}^k$ on the both sides of the above  equation and integration by parts implies
	$$\int_{Q} \LC q_2-q_1 \RC v^k_1\hat{v}^k\ dxdt =\int_{\Sigma}
	\hat{v}^k\partial_\nu (v^k_1-v_2^k)\, dS dt.$$

	Since $\mathcal{U}_{\pm}$ is a neighborhood of $\Gamma_{\pm,\omega_0}$ (recalling $\mathcal{V}_{\pm}=\mathcal{U}_{\pm}\times(0,T)$), there is an $\varepsilon>0$, such that\footnote{We also utilize the same parameter $\varepsilon$ to construct the solution $v_1$.}
	\begin{align*}
		\Big\{x\in\Gamma \, \Big| \, 0<\omega_0\cdot\nu(x)<2\varepsilon\Big\}\times(0,T)&\subset\mathcal{V}_-,\\
		\Big\{x\in\Gamma \,  \Big| \, \omega_0\cdot\nu(x)>-2\varepsilon\Big\}\times(0,T)&\subset\mathcal{V}_+.
	\end{align*}
	Therefore, by choosing 
	$$
	\omega\in \Big\{\omega\in\mathbb{S}^{n-1}\, \Big| \, |\omega-\omega_0|<\varepsilon\Big\},
	$$ we get  that 
	\begin{align*}
		\text{supp } v^k_1\Big|_{\Sigma}&\subset \Big\{x\in\Gamma \,  \Big| \, \omega\cdot\nu(x)\geq -\varepsilon \Big\} \times(0,T) \\ &\subset \Big\{x\in\Gamma \,  \Big| \, \omega_0\cdot\nu(x)>-2\varepsilon\Big\}\times(0,T)\subset\mathcal{V}_+, 		
	\end{align*} 
     and 
     \begin{align*}
     	\Big\{x\in\Gamma \, \Big|\, \omega_0\cdot\nu(x)\geq 2\varepsilon \Big\} \subset \Gamma_{+,\omega,\varepsilon} .
     \end{align*}
    Note that $v_1^k\Big|_{\Sigma}=v_2^k\Big|_{\Sigma}\in  C_0^{2+\alpha,1+\alpha/2}(\mathcal{V}_+)$ and recall  $\hat{v}^k=0$ on $ \Gamma_{+,\omega,\varepsilon}$. Then we have
	\begin{align*}
		&\left|\int_{\Sigma}\hat{v}^k\partial_\nu (v^k_1-v^k_2)\, dSdt\right|\\
		=&\left|\int_{\{\omega_0\cdot\nu\geq2\varepsilon\}}\hat{v}^k\partial_\nu (v^k_1-v^k_2)\,dSdt\right|+	\left|\int_{\{0<\omega_0\cdot\nu<2\varepsilon\}}\hat{v}^k\partial_\nu (v^k_1-v^k_2)\,dSdt\right|\\
		&+\left|\int_{\{\omega_0\cdot\nu\leq 0\}}\hat{v}^k\partial_\nu (v^k_1-v^k_2)\,dSdt\right|\\
		=&0
	\end{align*}
 Then 
 	$$\int_{Q'} \LC q_2-q_1 \RC v^k_1\hat{v}^k\ dxdt+ \int_{Q\backslash Q'} \LC q_2-q_1 \RC v^k_1\hat{v}^k\ dxdt= 0.$$

   Since we assume $q_1=q_2$ in $Q \setminus Q'$, it follows that
   	$$\int_{Q'} \LC q_2-q_1 \RC v^k_1\hat{v}^k\, dxdt=0.$$
	Therefore,  by  similar arguments as in Corollary \ref{fulldata}, letting $\rho\to\infty$, one has that
	\begin{equation*}\label{Q'}
		\int_{Q'}(q_2-q_1) e^{-\mathrm{i}(x,t)\cdot(\xi,\tau)}\, dxdt=0,
	\end{equation*}
    where $\mathrm{i}=\sqrt{-1}$.
	Since $
	\omega\in \Big\{\omega\in\mathbb{S}^{n-1}\, \Big| \, |\omega-\omega_0|<\varepsilon\Big\},
	$  it can be changed in a small conic neighborhood. By by using similar arguments as in Corollary \ref{fulldata}, we have 
	$$
	q_1=q_2 \text{ in } Q
	$$
	as desired.
\end{proof}

\begin{rmk}
	For the full data case, we can use Lemma \ref{appro} to get an approximation of CGO solutions instead of Lemma \ref{appro2}. We do not need to assume $q_1=q_2$ in $\Omega'\times(0,T)$, and we also point out that we cannot apply Corollary \ref{fulldata} to get the result for full data because we need to control the trace of solution on $\Sigma$. In other words, Lemma \ref{appro} is necessary even for the full data case in this paper.
\end{rmk}

\subsection{Proof of Theorem \ref{Main Thm:Simultaneous}}
With Lemma \ref{Lem:CGO} at hand, combining with the higher order linearization method, we are able to prove Theorem \ref{Main Thm:Simultaneous}.

\begin{proof}[Proof of Theorem $\ref{Main Thm:Simultaneous}$]  Let us first remark that the proofs of (a) and (b) in  Theorem \ref{Main Thm:Simultaneous} are similar, so it suffices to show the global uniqueness result with partial data. The  whole  proof is 
	divided into five parts. 
	
	\medskip
	
	{\it Step 1. Initiation} 
	
	\medskip
	
	\noindent Let us introduce the following boundary value 
	\begin{align}\label{small lateral BCs}
		f(x,t;\eps)=\sum_{\ell=1}^M \eps_\ell f_\ell\quad \text{on} \   \Sigma, 
	\end{align}
	where $M\in \N$,  $f_1, \cdots,  f_M \in C^{2+\alpha,1+\alpha/2}_0(\mathcal{V}_+)$  
	and $\eps=(\eps_1, \ldots,\eps_M)$ is a parameter vector in $\mathbb R^M$  with  $|\eps|=\sum\limits_{\ell=1}^{M}  |\epsilon_\ell|$ small enough,  such that  $\left\|\displaystyle\sum_{\ell=1}^M \eps_\ell f_\ell \right\|_{C^{2+\alpha,1+\alpha/2}_0(\overline{\Sigma})}$ is sufficiently small.
	For $j=1,  2$, by the local well-posedenss property in Section \ref{Sec 2}, there exist unique solutions $u_j=u_j(x,t;\eps)\in C^{2+\alpha,1+\alpha/2}(\overline{Q})$ to
	\begin{align}\label{IBVP of simultaneous recovery-eps}
		\begin{cases}
			u_{j,  t}-\Delta u_j+b_j(x,t,u_j)=0  &\text{ in }\ Q,\\
			u_j=\displaystyle\sum_{\ell=1}^M \eps_\ell f_\ell\ &\text{ on }\ \Sigma,\\
			u_j(x, 0)=g_j(x) &\text{ in }\ \Omega,
		\end{cases}
	\end{align}
	where $g_j\in C^{2+\alpha}_0(\Omega)$ with $\norm{g_j}_{C^{2+\alpha}(\Omega)}<\frac{\delta}{2}$ being sufficiently small, and $b_j(x,t,z)$ are admissible coefficients defined in Section \ref{Sec 1}.
    For the sake of convenience, when $\eps=0 $, let $\wt u_j=u_j(\cdot, \cdot; 0)$ be the solutions to 
	\begin{align}\label{IBVP of simultaneous recovery-eps=0}
		\begin{cases}
			\wt u_{j,  t}-\Delta \wt u_j +b_j(x,t, \wt u_j)=0  &\text{ in }\ Q,\\
			\wt u_j=0   &\text{ on }\ \Sigma,\\
			\wt u_j(x, 0)=g_j, &\text{ in }\ \Omega.
		\end{cases}
	\end{align}
	By utilizing the higher order linearization to \eqref{IBVP of simultaneous recovery-eps} around 
	the solution $\wt u_j$ to \eqref{IBVP of simultaneous recovery-eps=0},  we will  determine  information on  $b_j$ for $j=1,2$.
	
	\medskip
	
	{\it Step 2. The first order linearization $(M=1)$}
	
	\medskip
	
	\noindent One can linearize the equation \eqref{IBVP of simultaneous recovery-eps} around $\wt u_j$, where $\wt u_j$ is the solution to \eqref{IBVP of simultaneous recovery-eps=0}, for $j=1,2$. Due to Remark \ref{remark 2.2}, direct computations demonstrate that for $j=1, 2$ and $\ell=M=1$\footnote{In fact, the arguments hold for all $\ell=1,\ldots,M$, and we will use in steps 2-5.},  
	$$
	v_j^{(\ell)}(x,t)=\lim_{\eps \to 0} \frac{u_j(x,t)-\wt u_j(x,t)}{\eps_\ell}
	$$  satisfies the following parabolic equation: 
	\begin{align}\label{first linearization}
		\begin{cases}
			v_{j,  t} ^{(\ell)}-\Delta v_j^{(\ell)}+q_j v_j ^{(\ell)}=0 &\text{ in }\ Q, \\
			v_j ^{(\ell)}= f _\ell& \text{ on }\ \Sigma,\\
			v_j^{(\ell)}(x,  0)=0 &\text{ in }\ \Omega,
		\end{cases}
	\end{align}
	where 
	\[
	q_j (x,t):=b_{j,  u} (x,t,\wt u_j(x,t))\  \text{ in }Q\quad \mbox{ and }\quad q_j\in C^{2+\alpha,1+\frac{\alpha}{2}}(\overline{Q}). 
	\]
	We need to point out that both $\wt u_j$ and $v_j^{(\ell)}$ in \eqref{IBVP of simultaneous recovery-eps=0} and \eqref{first linearization} are still unknown, respectively,  since they solve parabolic equations with unknown coefficients and initial data.  In this  step,   we will  show that 
	\begin{align}\label{claim1}
	q_1 (x,t)=q_2 (x,t)\ \text{ in }  Q.
	\end{align}
	With the same partial DN maps at hand 
	$$\Lambda^{\mathrm{P}}_{b_1,g_1}(f)=\Lambda^{\mathrm{P}}_{b_2,g_2}(f), \quad \text{ for any sufficiently small }f\in C^{2+\alpha,1+\frac{\alpha}{2}}_0(\mathcal{V}_+),
	$$
	such that we have 
	\begin{align}\label{first linearized DN maps agree}
		\begin{array}{rll}
			&v_1^{(\ell)}(x, 0)=v_2^{(\ell)}(x, 0), \quad   \left. v_1^{(\ell)} \right|_{\Sigma}= \left. v_2^{(\ell)}\right|_{\Sigma}, \quad &	\left. \p _\nu v_1^{(\ell)} \right|_{\mathcal{V}_{-}}= \left. \p_\nu  v_2^{(\ell)}\right|_{\mathcal{V}_{-}}, 
		\end{array}
	\end{align} 
	for $\ell=M=1$.

	Now, subtracting \eqref{first linearization} with $j=1,2$, we have 
	\begin{align}\label{first linearization subtraction}
		\begin{cases}
			v^{(\ell)}_{t}- \Delta v^{(\ell)} +q_2 v^{(\ell)}= (q_2 -q_1 )v^{(\ell)}_1   & \text{ in }\ Q, \\
			v^{(\ell)}=0 &\text{ on }\ \Sigma, \\
			v^{(\ell)}(x,0)=0 &\text{ in }\ \Omega,
		\end{cases}
	\end{align}
	where $v^{(\ell)}:=v^{(\ell)}_1-v^{(\ell)}_2$. Let $\tilde  v_2^{(\ell)}$ be a solution to the
	 following backward parabolic equation:
	\begin{align}\label{tilde v_1 equation}
	\begin{cases}
			\tilde  v_{2,t}^{(\ell)}+\Delta \tilde  v_2^{(\ell)} - q_2 \tilde  v_2^{(\ell)}=0 & \text{ in }\ Q, \\
			\tilde v_2(x,T)=0 & \text{ in }\ \Omega.
	\end{cases}
	\end{align}
	Multiplying both sides of  the first equation   in \eqref{first linearization subtraction} by $\tilde  v_2^{(\ell)}$, by  \eqref{first linearized DN maps agree},  an integration by parts yields that  
	\begin{align}\label{integral id of 1st linearized equation1}
		\int_{Q} \LC q_2-q_1 \RC v_1^{(\ell)}\tilde v_2^{(\ell)}\, dxdt =\int_{\Sigma}
		\tilde v_2^{(\ell)}\partial_\nu v_1^{(\ell)}\, dS dt.
	\end{align}
	Moreover, with the condition $b_1=b_2 $ in $\Omega'\times(0,T)\times \R$ at hand, by applying Lemma \ref{Lem:CGO}, one can easily see that the claim \eqref{claim1} holds. Furthermore, as $q_1=q_2$ in $Q$, $v_1^{(\ell)}$ and $v_2^{(\ell)}$ satisfy the same parabolic equation \eqref{first linearization}, by the uniqueness of solutions,  we obtain  that
	\begin{align}\label{uniqueness for solutions of the first linearized equation}
	v^{(\ell)}:= v_1^{(\ell)}=v_2^{(\ell)} \text{ in }Q.
	\end{align}

	\medskip
	
	{\it Step 3. The second order linearization  $(M=2)$}
	
	\medskip

	\noindent  For the second linearization ($m=2$),  one can differentiate  (\ref{IBVP of simultaneous recovery-eps}) 
	with respect to different parameters $\eps_1$ and $\eps_2$. 
	A direct computation shows that $w^{(2)}_j$ $(j=1, 2)$ satisfy
	\begin{align}\label{second linearization}
		\begin{cases}
			w_{j, t} ^{(2 )}-\Delta w_j^{(2)}+qw_j^{(2)} +b_{j, uu} (x,t,\wt u_j)v^{(1)}v^{(2)}=0 &\text{ in }\ Q, \\
			w_j^{(2)} = 0 & \text{ on }\ \Sigma,\\
			w_j^{(2)}(x, 0)=0 &\text{ in }\ \Omega, 
		\end{cases}
	\end{align}
	where $q=q_1=q_2$, $b_{j, uu}(\cdot, \cdot, \tilde u_j)\in C^{2+\alpha,1+\frac{\alpha}{2}}(\overline{Q})$ 
	and $v^{(1)}, v^{(2)}\in C^{2+\alpha,1+\frac{\alpha}{2}}(\overline{Q})$ satisfy 
	\begin{align*}
		\begin{cases}
			v_{t} ^{(\ell)}-\Delta v^{(\ell)}+q(x,t)v^{(\ell)}=0 &\text{ in }\ Q, \\
			v^{(\ell)}= f_\ell& \text{ on }\ \Sigma,\\
			v^{(\ell)}(x,  0)=0 &\text{ in }\ \Omega,
		\end{cases}
	\end{align*}
	here  $f_1$ and $f_2$ can be arbitrarily chosen.

	Next, we will prove that 
	\begin{align}\label{claim2}
		b_{1,  uu} (x,t,\wt u_1(x, t))=b_{2, uu}(x,t,\wt u_2(x, t)) \quad {in}\quad Q.
	\end{align}
	With the same DN map at hand, by differentiating $\eps_1$ and $\eps_2$, we have 
	\begin{align}\label{second linearized DN maps agree}
		\begin{array}{rll}
			w_1^{(2)}(x, 0)=w_2^{(2)}(x, 0), \quad  w_1^{(2)} \Big|_{\Sigma}=w_2^{(2)}\Big|_{\Sigma}, \quad &	\p _\nu w_1^{(2)} \Big|_{\mathcal{V}_{-}}=\p_\nu  w_2^{(2)}\Big|_{\mathcal{V}_{-}   }.
		\end{array}
	\end{align}

	Let $v^{(0)}$ be any solution to the backward parabolic equation: 
	\begin{eqnarray}\label{AA}
		\left\{
		\begin{array}{ll}
			v_{t}^{(0)}+\Delta   v^{(0)} - q  v^{(0)}=0 &\text{ in } Q,\\[2mm]
			v^{(0)}(x, T)=0 &\text{ in }\Omega.
		\end{array}
		\right.
	\end{eqnarray}
	By subtracting the equations \eqref{second linearization} associated to $j=1,  2$, an integration by parts yields
	\begin{align}\label{second integral id}
		\begin{split}
			\int_{Q} \Big[b_{1, uu} (x,t,\wt u_1(x, t))-b_{2, uu}(x,t,\wt u_2(x, t))\Big] v^{(0)}  v^{(1)}v^{(2)}  \, dxdt =0
		\end{split}
	\end{align}
	 We next choose a nonzero boundary data $f_2$ such that $f_2\geq 0$ on  $\Sigma$  and $f_2>0$ on  $D_t\times(0, T)$, where  $D_t\subset \Gamma$ is a relative open subset for  any $t\in (0, T)$. Via the condition $f_2=v^{(2)}|_{\Sigma}\in L^\infty(\Sigma)$ at any time $t\in  (0, T)$, by applying the maximum principle for parabolic equation (for example, see \cite[Chapter 7]{evans1998partial} or Appendix \ref{Sec: Appendix}), we have a bounded positive solution $v^{(2)}$ in $Q$. 
	 Now, by selecting $v^{(1)}$ and $v^{(0)}$ as the CGO solutions of forward and backward parabolic equations, via Corollary \ref{fulldata},  we get 
	 \[
	  \big[b_{1, uu} (x,t,\wt u_1(x, t))-b_{2, uu}(x,t,\wt u_2(x, t))\big] v^{(2)}=0 \text{ in }Q.
	 \]
	 With the positivity of $v^{(2)}$ in $Q$ at hand, we have (\ref{claim2}) as desired. 
	Furthermore, by  the uniqueness of solutions to \eqref{second linearization}, one can immediately obtain 
	\[
	w_1^{(2)}= w_2^{(2)} \quad\text{ in }Q.
	\]

	\medskip
	
	{\it Step 4. The higher order linearization $(M>2)$}
	
	\medskip
	
	\noindent  By utilizing the higher order linearization with the induction hypothesis, we are able to find $M$-th order derivative of \eqref{IBVP of simultaneous recovery-eps}  and prove that
	\begin{align}\label{claim3}
		\p_u^M b_1 (x,t,  \wt u_1(x, t))=\p_u^M b_2 (x,t, \wt u_2(x, t))\quad \text{ in }Q,
	\end{align}
	for any $M=3, 4, \cdots$.
	Let us first  assume that 
	$$
	\p_u^k b_1 (x,t, \wt u_1(x, t))=\p_u^k b_2 (x,t, \wt u_2(x, t))\  \text{ in }Q, \  \text{ for any }k=1,\dots, M-1.
	$$
	Similar to previous steps, we differentiate  \eqref{IBVP of simultaneous recovery-eps} with respect to $\eps_1,\ldots, \eps_{M-1}$ and $\eps_{M}$, then we have 
	\begin{align*}
		\int_{Q}\Big[\p_u ^M b_1 (x,t, \wt u_1(x, t)) - \p_u^M b_2 (x,t, \wt u_2(x, t))\Big]  v^{(0)}v^{(1)} \cdots v^{(M)}\, dxdt =0,
	\end{align*}
	where $v^{(0)}$ is the solution to the backward parabolic equation \eqref{AA}, and $v^{(\ell)}$ $(\ell=1,2, \cdots, M)$ are solutions to the forward parabolic equation \eqref{first linearization}.
	Similar to Step 3, 	let us choose $v^{(0)}$ and $v^{(1)}$ as CGO solutions, and $v^{(2)},\ldots, v^{(M) }$ are bounded positive solutions in $Q$ 
	\begin{equation}\label{RRR}
		\p_u ^M b_1 (x,t, \wt u_1(x, t))=\p_u^M b_2 (x,t, \wt u_2(x, t))\mbox{ in }Q, \quad \text{ for any }M\in \N.
	\end{equation}

	\medskip
	
	{\it Step 5. The determination of  initial data and coefficients}
	
	\medskip
	
	\noindent Recall that  $\wt u_j$ $(j=1, 2)$ are the solutions to the semilinear parabolic equation:
	\begin{align*}
		\begin{cases}
			\wt u_{j,  t}-\Delta \wt u_j +b_j(x,t, \wt u_j)=0  &\text{ in }\ Q,\\
			\wt u_j=0   &\text{ on }\ \Sigma,\\
			\wt u_j(x, 0)=g_j, &\text{ in }\ \Omega.
		\end{cases}
	\end{align*}
	As in the proof of \cite[Theorem 1.3]{LLL2021determining}, by the admissible property of $b_1$ and $b_2$,
	\begin{eqnarray}\label{LLL}
		\begin{array}{ll}
			&b_1(x, t, \wt u_1(x, t))-b_2(x, t, \wt u_2(x, t))\\[2mm]
			=&\displaystyle\sum\limits_{k=1}^\infty \frac{\partial_u^k  b_2(x, t, \wt u_2(x,  t))}{k!}\Big[-\wt u_2(x, t)\Big]^k
			-\sum\limits_{k=1}^\infty \frac{\partial_u^k  b_1(x, t, \wt u_1(x,  t))}{k!}\Big[-\wt u_1(x, t)\Big]^k\\
			=&\displaystyle\sum\limits_{k=1}^\infty \frac{\partial_u^k  b_1(x, t, \wt u_1(x,  t))(-1)^k}{k!}\Big\{\Big[\wt u_2(x, t)\Big]^k-\Big[\wt u_1(x, t)\Big]^k\Big\}.
		\end{array}
	\end{eqnarray}
	Since both $\wt u_1$
	and $\wt u_2$ are bounded, set $R=\|\wt u_1\|_{L^\infty(Q)}+\|\wt u_2\|_{L^\infty(Q)}$. Then, for any $L>0$ and $(x, t)\in Q$, 
	\begin{align*}
		&\left|\frac{b_1(x, t, \wt u_1(x, t))-b_2(x, t,\wt u_2(x, t))}{\wt u_1(x, t)-\wt u_2(x, t)}\right|\\
		=&\left|\sum\limits_{k=1}^\infty \frac{\partial_u^k  b_1(x, t, \wt u_1 (x,  t))}{k!}(-1)^{k+1}\Big\{\Big[\wt u_1(x, t)\Big]^{k-1}+\Big[\wt u_1(x, t)\Big]^{k-2}\wt u_2(x, t)+\cdots\right.\\
		&\quad \left.+\wt u_1 (x, t)\Big[\wt u_2(x, t)\Big]^{k-2}+\Big[\wt u_2(x, t)\Big]^{k-1}\Big\}\right|\\
		\leq &\sum\limits_{k=1}^\infty \left|\partial_u^k  b_1(x, t, \wt u_1(x,  t))\right|
		\frac{R^{k-1}}{(k-1)!}  \\
		\leq &\sum\limits_{k=1}^\infty \frac{k R^{k-1} }{L^k} \sup\limits_{|z-\wt u_1(x, t)|=L}  
		|b_1(x, t, z)|.
	\end{align*}
	Choose  $L=2(R+1)$. By the  admissibility of $b_1$ and $b_2$, 
	$$
	G(\cdot, \cdot)=\frac{b_1(\cdot, \cdot, \wt u_1(\cdot, \cdot))-b_2(\cdot, \cdot, \wt u_2(\cdot, \cdot))}{\wt u_1(\cdot, \cdot)-\wt u_2(\cdot, \cdot)}
	\in L^\infty(Q).
	$$
	Set $w=\wt u_1-\wt u_2$. It is easy to see that
	\begin{align*}
		\begin{cases}
			w_{t}-\Delta w +Gw=0  &\text{ in }\ Q,\\
			w=0   &\text{ on }\ \Sigma,\\
			w(x, 0)=g_1-g_2   &\text{ in }\ \Omega.
		\end{cases}
	\end{align*}
	By $\Lambda_{b_1,g_1}(0)=\Lambda_{b_2, g_2}(0)$
	and Lemma \ref{lemma3}, we have 
	$$
	g_1=g_2 \text{ in }\Omega \quad \mbox{ and }\quad \wt u_1=\wt u_2 \text{ in }\ Q.
	$$
	By (\ref{LLL}),  
	$$
	b_1(x, t, \wt u_1(x, t))=b_2(x, t, \wt u_2(x, t)) \quad\mbox{ in }\ Q.  
	$$
	In addition,  note that for $j=1, 2$ and any $(x, t, z)\in Q\times\mathbb R$,
	\begin{align*}
		&&b_j(x, t, z)=b_j(x, t, \wt u_j(x, t))
		+\sum\limits_{k=1}^{\infty} 
		\frac{\partial_u^k b_j(x, t, \wt u_j(x, t))}{k!}\LC z-\wt u_j(x, t)\RC^k,
	\end{align*}
	which implies that $b_1(x, t, z)=b_2(x, t, z)$ in  $Q\times\mathbb R$. This proves the assertion.
\end{proof}

\subsection{Proof of Theorem \ref{Main Thm:Simultaneous linear}}

Similar to the proof of Theorem \ref{Main Thm:Simultaneous}, we are ready to prove Theorem \ref{Main Thm:Simultaneous linear}.

\begin{proof}[Proof of Theorem \ref{Main Thm:Simultaneous linear}]
The argument is similar to the proof of Theorem \ref{Main Thm:Simultaneous}, and we prove this result with the full data. Let us divide the proof into two steps. 

	\medskip

{\it Step 1. Unique determination of coefficients}

\medskip

\noindent Let $u_j=u_j(x,t)$ be the solution to
\begin{align*}
	\begin{cases}
		u_{j,  t}-\Delta u_j+q_j u_j=0  &\text{ in }\ Q,\\
		u_j=f &\text{ on }\ \Sigma,\\
		u_j(x, 0)=g_j(x) &\text{ in }\ \Omega,
	\end{cases}
\end{align*}
and let $\wt u_j=\wt u_j(x,t)$ be the solution to  
\begin{align}\label{IBVP of simultaneous recovery-linear2}
	\begin{cases}
		\wt u_{j,  t}-\Delta \wt u_j+q_j \wt u_j=0  &\text{ in }\ Q,\\
		\wt u_j=0 &\text{ on }\ \Sigma,\\
		\wt u_j(x, 0)=g_j(x) &\text{ in }\ \Omega,
	\end{cases}
\end{align}
for $j=1,2$. With the same DN maps on the lateral boundary at hand, we have 
\begin{align}\label{same DNs}
	\p_\nu u_1 =\p_\nu u_2 \quad \text{ and }\quad \p_\nu \wt u_1 =\p_\nu \wt u_2 \quad \text{ on }\quad \Sigma.
\end{align}
We next consider $v_j :=u_j -\wt u_j$ for $j=1,2$, then $v_j$ is the solution of
\begin{align}\label{IBVP of simultaneous recovery-linear3}
	\begin{cases}
		v_{j,  t}-\Delta v_j+q_j v_j=0  &\text{ in }\ Q,\\
		v_j=f &\text{ on }\ \Sigma,\\
		v_j(x, 0)=0 &\text{ in }\ \Omega.
	\end{cases}
\end{align}

Subtracting \eqref{IBVP of simultaneous recovery-linear3} with respect to $j=1,2$, we get 
\begin{align}\label{IBVP of simultaneous recovery-linear4}
	\begin{cases}
		v_{  t}-\Delta v+q_2 v= (q_2-q_1)v_1 &\text{ in }\ Q,\\
		v=\p_\nu v=0 &\text{ on }\ \Sigma,\\
		v(x, 0)=0 &\text{ in }\ \Omega,
	\end{cases}
\end{align}
where $v=v_1-v_2$. Moreover, via the condition \eqref{same DNs}, we have $\p_\nu v=0$ on $\Sigma$.
On the other hand, let $\wt v_2$ be a solution to the backward parabolic equation 
\begin{align*}
	\begin{cases}
		\wt v_{2,t}+\Delta \wt v_2 +q_2\wt v_2 =0 & \text{ in }\ Q,\\
		\wt v_2(x,T)=0 &\text{ in }\ \Omega.
	\end{cases}
\end{align*}
Multiplying \eqref{IBVP of simultaneous recovery-linear4} by the function $\wt v_2$, an integration by parts yields that 
\begin{align}
	\int_{Q}(q_2-q_1)v_1 \wt v_2 \, dxdt=0.
\end{align}
By applying the global uniqueness result with full data (Corollary \ref{fulldata}), then we have $q_1=q_2$ as desired.

	\medskip

{\it Step 2. Unique determination of initial data}

\medskip

\noindent Recalling that $\wt u_j$ is the solution of \eqref{IBVP of simultaneous recovery-linear2}, by using the uniqueness $q_1=q_2$, we can subtract \eqref{IBVP of simultaneous recovery-linear2} with respect to $j=1,2$, then we obtain 
 \begin{align}\label{IBVP of simultaneous recovery-linear5}
 	\begin{cases}
 		 u_{  t}-\Delta u+q  u=0  &\text{ in }\ Q,\\
 		 u=0 &\text{ on }\ \Sigma,\\
 		 u(x, 0)=g_1-g_2 &\text{ in }\ \Omega,
 	\end{cases}
 \end{align}
where $q=q_1-q_2$ and $u= \wt u_1-\wt u_2$. Via the condition \eqref{same DNs} again, we have $\p_\nu u=0 $ on $\Sigma$. Finally, by applying the quantitative stability estimate \eqref{Stability estimate in Thm 1}, we can obtain the uniqueness of the initial data $g_1=g_2$ in $\Omega$. This proves the assertion.
\end{proof}

\begin{rmk}
	One can find that when the initial and boundary data are small enough, Theorem \ref{Main Thm:Simultaneous linear} can be regarded as a corollary of Theorem \ref{Main Thm:Simultaneous}, where we can simply take $b_j(x,t,u):=q_j(x,t)u$ for $j=1,2$. In order to distinguish the statements of Theorems \ref{Main Thm:Simultaneous} and \ref{Main Thm:Simultaneous linear}, we provide two complete proofs of both theorems.
\end{rmk}

\appendix

\section{Auxiliary results}\label{Sec: Appendix}

In the end of this paper, for the sake of self-containedness, we review some properties for linear hear equations, which were used in our proofs.

\subsection{Complex geometrical optics solutions}

We first prove a density result for the product of solutions to forward and backward parabolic equations in $L^1(Q)$. It  depends on the construction of CGO solutions, which vanish at initial or final time. They were constructed in \cite{CK2018determination}, and we summarize the results as the following propositions. To make the explanation clear, we split the 
procedure  into two parts.

For any $\rho>0$, we define
\begin{align*}
 \begin{cases}
 		\psi_{+,\rho}(x, t)=\exp(-(\rho\omega\cdot x+\rho^2t)), \\
 		 \psi_{-,\rho}(x, t)=\exp(\rho\omega\cdot x+\rho^2t),
 \end{cases}
\end{align*}
and 
\begin{align*}
	\begin{cases}
	   \theta_{+,\rho}(x, t; \xi,  \tau)=\LC 1-\exp(-\rho^{3/4}t)\RC  \exp(-\mbox{i}(x,t)\cdot(\xi,\tau)) ,\\
	   \theta_{-, \rho}(x,t)=1-\exp(-\rho^{3/4}(T-t)),
	\end{cases}
\end{align*}
where $\xi\in\mathbb{R}^n $ with $\xi\cdot\omega=0$ and $\tau\in\mathbb{R}$.


The following proposition was demonstrated in \cite[Propositions 4.3, 4.4]{CK2018determination}, and we state the result without proofs for the sake of convenience.

\begin{prop}\label{CGO_F}
	Let $m,\varepsilon>0$ and $\omega\in\mathbb{S}^{n-1}$. There is a positive constant $C$,   depending only on $Q, m$ and $\varepsilon$, such that for any $q
	\in \Big\{q\in L^{\infty}(Q)\  \Big| \  \norm{q}_{L^{\infty}(Q)}<m\Big\}$. Then we have 
	\begin{itemize}
			\item[(a)] There exists a CGO solution $u \in  L^2(0,T;H^1(\Omega))\cap H^1(0,T;H^{-1}(\Omega))$ to the forward parabolic equation 
		\begin{align}
			\begin{cases}
				(\p_{t}-\Delta +q)u=0   &\text{ in }\  Q,\\
				u(x,0)=0     & \text{ in }\ \Omega,
			\end{cases}                     
		\end{align}
		of the form
		$$u(\cdot, \cdot; \rho,  \xi, \tau)=\psi_{-,\rho}(\theta_{+,\rho}+z_{+, \rho, q}),$$
		where $z_{+, \rho, q}\in  L^2(0,T;H^1(\Omega))\cap H^1(0,T;H^{-1}(\Omega)) $
		$$\lim\limits_{\rho\to\infty}\norm{z_{+,  \rho, q}}_{L^2(Q)}=0$$
			\item[(b)] There exists a CGO solution $u\in L^2(0,T;H^1(\Omega))\cap H^1(0,T;H^{-1}(\Omega))$ to the backward parabolic equation 
		\begin{align}\label{cgo2}
			\begin{cases}
				(-\p_{t}-\Delta +q) u=0   &\text{ in }\  Q,\\
				u=0                   &\text{ on } \  \Gamma_{+,\omega,\varepsilon}\times(0,T),\\
				u(x,T)=0     & \text{ in }\ \Omega,
			\end{cases}                     
		\end{align}
		of the form 
		$$u(\cdot, \cdot; \rho)=\psi_{+, \rho}(\theta_{-,\rho}+z_{-, \rho, q}),$$
		where $z_{-, \rho, q}\in L^2(0,T;H^1(\Omega))\cap H^1(0,T;H^{-1}(\Omega))$ and it satisfies the decay condition:
	$$\lim\limits_{\rho\to\infty}\norm{z_{-,  \rho, q}}_{L^2(Q)}=0$$
	\end{itemize}
	
\end{prop}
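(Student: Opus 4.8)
The plan is to construct these complex geometrical optics solutions by the classical scheme adapted to the parabolic (hence non-elliptic) operator: one conjugates away the leading exponential weight, reduces the equation for the remainder to a conjugated parabolic equation with a source, and inverts that conjugated operator with a gain in the large parameter $\rho$. I will describe item (a), the forward equation; item (b) follows by the symmetry $t\mapsto T-t$ together with one extra correction explained below. The starting point is that $\psi_{-,\rho}(x,t)=\exp(\rho\,\omega\cdot x+\rho^{2}t)$ is an exact solution of the free heat equation, $(\partial_{t}-\Delta)\psi_{-,\rho}=(\rho^{2}-\rho^{2}|\omega|^{2})\psi_{-,\rho}=0$. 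Writing $u=\psi_{-,\rho}w$ and using $\nabla\log\psi_{-,\rho}=\rho\,\omega$, one computes
\[
(\partial_{t}-\Delta+q)(\psi_{-,\rho}w)=\psi_{-,\rho}\big(\partial_{t}w-2\rho\,\omega\cdot\nabla w-\Delta w+qw\big),
\]
so $u$ solves the forward equation with $u(\cdot,0)=0$ exactly when $w$ solves $L_{\rho}w:=\partial_{t}w-2\rho\,\omega\cdot\nabla w-\Delta w+qw=0$ with $w(\cdot,0)=0$. Substituting $w=\theta_{+,\rho}+z$ and using $\xi\cdot\omega=0$, a direct computation with $a_{\rho}(t)=1-e^{-\rho^{3/4}t}$ gives
\[
L_{\rho}\theta_{+,\rho}=\Big(\rho^{3/4}e^{-\rho^{3/4}t}+(|\xi|^{2}-\mathrm{i}\tau)\,a_{\rho}(t)\Big)e^{-\mathrm{i}(x,t)\cdot(\xi,\tau)}+q\,\theta_{+,\rho},\qquad \theta_{+,\rho}(\cdot,0)=0,
\]
so the remainder must solve $L_{\rho}z=-L_{\rho}\theta_{+,\rho}=:F_{\rho}$ with the homogeneous initial condition $z(\cdot,0)=0$.

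The analytic core is a solvability-with-gain estimate for the conjugated operator $L_{\rho}^{0}:=\partial_{t}-2\rho\,\omega\cdot\nabla-\Delta$: for $\rho$ large it admits a right inverse $G_{\rho}$, mapping $L^{2}(Q)$ into $L^{2}(0,T;H^{1}(\Omega))\cap H^{1}(0,T;H^{-1}(\Omega))$, producing functions that vanish at $t=0$, and obeying $\|G_{\rho}h\|_{L^{2}(Q)}\le C\rho^{-1}\|h\|_{L^{2}(Q)}$. Granting this, the potential is absorbed by a Neumann series: since $\|q\|_{L^{\infty}(Q)}<m$, for $\rho\ge\rho_{0}(m)$ the map $z\mapsto G_{\rho}(F_{\rho}-qz)$ is a contraction on $L^{2}(Q)$, whose fixed point is the desired $z_{+,\rho,q}$. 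From the explicit form of $F_{\rho}$, together with $\|\rho^{3/4}e^{-\rho^{3/4}t}\|_{L^{2}(0,T)}\le C\rho^{3/8}$ and $\|a_{\rho}\|_{L^{\infty}}\le1$, one gets $\|F_{\rho}\|_{L^{2}(Q)}\le C(\rho^{3/8}+1+|(\xi,\tau)|^{2})$, hence $\|z_{+,\rho,q}\|_{L^{2}(Q)}\le C\rho^{-1}(\rho^{3/8}+1+|(\xi,\tau)|^{2})\to0$ as $\rho\to\infty$ for fixed $(\xi,\tau)$ (sharper decay rates are recorded in \cite{CK2018determination}), and the stated regularity of $u$ follows from the same estimate. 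The estimate for $L_{\rho}^{0}$ itself is what is carried out in \cite{CK2018determination}: after the partial Fourier transform in the $\omega$-direction and in $t$, and diagonalisation of the transverse Laplacian, the symbol of $L_{\rho}^{0}$ has modulus $\gtrsim\rho$ outside a lower-dimensional resonant frequency set, which is in turn handled by a contour shift.

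For item (b) one conjugates instead by $\psi_{+,\rho}(x,t)=\exp(-(\rho\,\omega\cdot x+\rho^{2}t))$, which solves $(-\partial_{t}-\Delta)\psi_{+,\rho}=0$, replaces $\theta_{+,\rho}$ by $\theta_{-,\rho}(x,t)=1-e^{-\rho^{3/4}(T-t)}$, imposes the vanishing condition at $t=T$, and otherwise repeats the estimates of the previous paragraph after reversing time; the homogeneous Dirichlet condition on $\Gamma_{+,\omega,\varepsilon}\times(0,T)$ is then arranged by an additional correction, exploiting, after a suitable choice of $\omega$ in a conic neighbourhood of $\omega_{0}$ and a translation of $\Omega$, that the weight $\psi_{+,\rho}$ — and hence the constructed CGO — is negligible on $\Gamma_{+,\omega,\varepsilon}$ relative to the rest of $\Gamma$; this step is carried out in \cite{CK2018determination}. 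I expect the main obstacle to be precisely the solvability-with-gain estimate for $L_{\rho}^{0}$ subject to the prescribed homogeneous initial (and, for (b), partial lateral) conditions: since the operator is parabolic rather than elliptic, the Sylvester--Uhlmann right inverse must be replaced by a parabolic analogue, and the partial lateral condition couples this construction to the correction step above — this is the technical heart of \cite{CK2018determination}, which I would invoke rather than reprove.
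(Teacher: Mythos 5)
The paper does not prove Proposition \ref{CGO_F} at all: it states the result verbatim as \cite[Propositions 4.3, 4.4]{CK2018determination} "without proofs for the sake of convenience," so there is no in-paper argument to compare against. Your outline --- conjugating by $\psi_{\mp,\rho}$ to reduce to $\partial_t w-2\rho\,\omega\cdot\nabla w-\Delta w+qw=0$, computing $L_\rho\theta_{+,\rho}$ (your formula and the bound $\|\rho^{3/4}e^{-\rho^{3/4}t}\|_{L^2(0,T)}\lesssim\rho^{3/8}$ check out), absorbing $q$ by a contraction once a right inverse with an $O(\rho^{-1})$ gain is available, and deferring that gain estimate and the partial lateral vanishing on $\Gamma_{+,\omega,\varepsilon}\times(0,T)$ to \cite{CK2018determination} --- is a faithful and correct account of the construction in that reference, so it matches the paper's (implicit) approach.
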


\subsection{Maximum principle}

Finally, let us show the maximum principle for a linear parabolic equation.

\begin{lem}[Strong maximum principle]
	Let $\Omega\subset \R^n$ be a bounded  domain with smooth boundary $\Gamma$ for $n \in \N$. Let $q\in 
	C(\overline{Q})$ and $v\in C^{2,  1}(Q)\cap  C(\overline{Q})$ be a solution to
	\begin{align}\label{eq max principle}
		\begin{cases}
			v_t -\Delta v+q  v=0 &\text{ in }\ Q,\\
			v=f &\text{ on }\ \Sigma, \\
			v(x,0)=0 &\text{ in }\ \Omega.
		\end{cases}
	\end{align}
Suppose that $f\geq 0$ on  $\Sigma$  and $f>0$ on  $D_t\times(0, T)$ 
with $D_t\subset \Gamma$ being a relative open subset for  any $t\in (0, T)$, then $v>0$ in $Q$.   \end{lem}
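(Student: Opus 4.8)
The plan is to reduce to the case of a nonnegative zeroth-order term and then invoke the weak and strong maximum principles for linear parabolic equations in the standard way.

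First I would remove the sign ambiguity of $q$. Since $q\in C(\overline Q)$ is bounded, set $\lambda:=\|q\|_{C(\overline Q)}$ and $w:=e^{-\lambda t}v$. A direct computation gives $w_t-\Delta w+(q+\lambda)w=0$ in $Q$, with $w=e^{-\lambda t}f$ on $\Sigma$ and $w(\cdot,0)=0$ in $\Omega$; moreover $w$ has the same sign as $v$ at every point of $\overline Q$, $w\ge 0$ on $\Sigma$, and $w=e^{-\lambda t}f>0$ on $D_t\times(0,T)$. Because $\tilde q:=q+\lambda\ge 0$ on $\overline Q$ and the parabolic boundary data of $w$ (its values on $\Sigma$ and on $\overline\Omega\times\{0\}$) are nonnegative, the weak maximum principle for linear parabolic equations (see e.g. \cite[Chapter~7]{evans1998partial}) gives $w\ge 0$, and hence $v\ge 0$, in $Q$.

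Then I would prove strict positivity by contradiction. Suppose $v(x_0,t_0)=0$ at some $(x_0,t_0)\in Q$; then $w(x_0,t_0)=0$ as well, so the nonnegative function $w$ attains an interior minimum. Equivalently, the subsolution $-w$, which satisfies $(-w)_t-\Delta(-w)+\tilde q(-w)=0\le 0$ with $\tilde q\ge 0$, attains its (nonnegative) maximum value $0$ at the interior point $(x_0,t_0)$. The strong maximum principle for parabolic operators, together with the connectedness of $\Omega$ and the continuity $w\in C(\overline Q)$, then forces $w\equiv 0$ on $\overline\Omega\times[0,t_0]$; in particular $w(x,t)=0$ for all $x\in\Gamma$ and $t\in(0,t_0)$. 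But for any such $t\in(0,t_0)\subset(0,T)$ the set $D_t\subset\Gamma$ is a nonempty relatively open set on which $f$, hence $w$, is strictly positive — a contradiction. Therefore $v>0$ in $Q$.

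The substitution and the invocations of the maximum principles are routine; the one point deserving care is the backward-in-time propagation inside the strong maximum principle, i.e.\ that an interior null minimum at $(x_0,t_0)$ forces $w$ to vanish on the entire slab $\overline\Omega\times[0,t_0]$ (this uses connectedness of $\Omega$ and a standard chaining argument of interior balls reaching up to the lateral boundary), together with the transfer of this identity to $\Gamma$ via the continuity of $v$ on $\overline Q$, which is what produces the contradiction with the strict positivity of $f$ on $D_t\times(0,T)$. This is also precisely where the hypothesis that each $D_t$ is nonempty is essential.
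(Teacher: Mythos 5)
Your proof is correct and follows essentially the same route as the paper: the exponential substitution $w=e^{-\lambda t}v$ to make the zeroth-order coefficient nonnegative, followed by the strong maximum principle to propagate a hypothetical interior zero backward to the slab $\overline\Omega\times[0,t_0]$, contradicting the strict positivity of $f$ on $D_t\times(0,t_0)$. You are in fact more careful than the paper, which omits the preliminary weak-maximum-principle step establishing $v\ge 0$ (needed so that $0$ is indeed the minimum value) and simply cites \cite[Chapter 7]{evans1998partial} for the propagation.
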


\begin{proof}
	Without loss of generality,   we assume that $q\geq 0$  in $Q$. Otherwise, let	
	$u=e^{-\lambda t}v$, where $\lambda>0$ is a sufficiently large positive parameter. 
	If  there exists a pair $(x_0, t_0)\in Q$,  such that
	$v(x_0, t_0)=0$.  Then by \cite[Chapter 7]{evans1998partial}, 
	$v\equiv  0$  in  $\Omega\times(0, t_0)$.  It contradicts with 
	the fact that $f>0$ on $D_t\times(0,  t_0)$. Hence, $v>0$ in $Q$.

	\end{proof}

\vskip0.5cm

\noindent\textbf{Acknowledgment.} The work of Y.-H. Lin is partially supported by the Ministry of Science and Technology Taiwan, under the Columbus Program: MOST-110-2636-M-009-007. The work of H. Liu is supported by a startup fund from City University of Hong Kong and the Hong Kong RGC General Research Funds (projects 12301420, 12302919 and 12301218).
The  work of  X. Liu is partially supported  by NSF of China under  grants 11871142 and 11971320.

\bibliographystyle{alpha}

\bibliography{ref}

\end{document}